\newcommand{\N}{\mathbb{N}}
\newcommand{\PP}{\mathbb{P}} 
\newcommand{\R}{\mathbb{R}}
\newcommand{\Z}{\mathbb{Z}} 
\newcommand{\cL}{\mathcal{L}}
\newcommand{\fm}{{\frak m}}
\DeclareMathOperator{\Cr}{Cr}
\DeclareMathOperator{\reg}{reg}
\newtheorem{theorem}{Theorem}[section]
\newtheorem{proposition}[theorem]{Proposition}
\newtheorem{lemma}[theorem]{Lemma}
\newtheorem{corollary}[theorem]{Corollary}
\newtheorem{conjecture}[theorem]{Conjecture}
\theoremstyle{definition}
\newtheorem{remark}[theorem]{Remark}
\numberwithin{equation}{section}
\begin{document}

\title{Interpolation and the weak Lefschetz property}

\author{Uwe Nagel}
\address{Department of Mathematics\\
University of Kentucky\\
715 Patterson Office Tower\\
Lexington, KY 40506-0027 USA}
\email{uwe.nagel@uky.edu}

\author{Bill Trok}
\address{Department of Mathematics\\
University of Kentucky\\
715 Patterson Office Tower\\
Lexington, KY 40506-0027 USA}
\email{william.trok@uky.edu}

\thanks{The first author was partially supported by Simons Foundation grant \#317096.}

\begin{abstract} 
Our starting point is a basic problem in Hermite interpolation  theory, namely determining the least degree of a homogeneous polynomial that vanishes to some specified order at every point of a given finite set. We solve this problem if the number of points is small compared to the dimension of their linear span. This also allows us to establish results on the Hilbert function of ideals generated by powers of linear forms.  The Verlinde formula determines such a Hilbert function in a specific instance. We complement this result and also determine the Castelnuovo-Mumford regularity of the corresponding ideals. 
As applications we establish new instances of conjectures by Chudnovsky and by Demailly on the Waldschmidt constant. Moreover, we show that conjectures on the failure of the weak Lefschetz property by Harbourne, Schenck, and Seceleanu as well as by Migliore, Mir\'o-Roig, and the first author are true asymptotically. The latter also relies on a new result about Eulerian numbers. 
\end{abstract} 

\date{\today}

\maketitle


\section{Introduction} 
    \label{sec:intro}

This work contributes to the theory of Hermite interpolation, the study of Hilbert functions related to the Verlinde formula, the containment problem of comparing ordinary and symbolic powers of an ideal, and the study of Eulerian numbers. It also resolves  conjectures on the presence of the weak Lefschetz property. 

A fundamental problem in the theory of Hermite interpolation  is to determine the least degree of a homogenous polynomial that, given a set of s points $Z = \{P_1,\ldots,P_s\}$ in projective space $\PP^n$ and positive integers $m_1,\ldots,m_s$, vanishes to order $m_i$ at the point $P_i$ for every $i$. This is a very difficult problem. This remains true even if the numbers $m_i$ are all the same, say $k = m_1 = \cdots = m_s$. In this case the polynomials vanishing to order $k$ at every point of $Z$ form an ideal $I_Z^{(k)}$  that is called the \emph{$k$-th symbolic power} of the ideal $I_Z = I_Z^{(1)}$ of $Z$. The above interpolation problem asks for the least degree of a nonzero polynomial in this ideal, denoted $\alpha (I_Z^{(k)})$. We solve this problem if $s = |Z|$ is small compared to $n$ and the points in $Z$ span $\PP^n$. This  is well-known if $|Z| = n+1$  (see Remark~\ref{rem:n+1 points}). However, if $|Z| = n+2$  the answer depends on the position of the points. Specifically, let $t$ be the least integer such that a subset of $t+2$ points of $Z$ is linearly dependent (and so $1 \le t \le n$). We show in Theorem~\ref{thm:n+2 points} that
\[
\alpha (I_Z^{(k)}) = \left \lceil \frac{(2n+2-t) k}{2n-t} \right \rceil. 
\]
If $|Z| = n+3$ and no $n+1$ points of $Z$ are linearly dependent, that is, $Z$ is in \emph{linearly general position}, we determine  $\alpha (I_Z^{(k)})$ in Theorem~\ref{thm:n+3 points}.  The parity of $n$ plays an important role in this case. 

Recall that any point $P$ in $\PP^n$ with coordinates $(a_0 : \ldots : a_n)$ determines, up to a multiple,  a linear form 
$\ell_P = a_0 x_0 + \cdots + a_n x_n$ in the polynomial ring  $R = K[x_0,\ldots,x_n]$ over a field $K$. Combined with Matlis-Macaulay duality, this  allows one to relate  symbolic powers $I_Z^{(k)}$ to ideals generated by powers of the  
linear forms. More precisely, if $|Z| = n+3$, let $\ell_1,\ldots,\ell_{n+3}$ be the linear forms dual to the points of $Z$. 
For every integer $d \ge 1$, the graded $K$-algebra $A = R/(\ell_1^d,\ldots,\ell_{n+3}^d )$ is Artinian,  and it is 
interesting (see, e.g., \cite{AP}) and difficult to determine its Hilbert function $h_A (j) = \dim_K [A]_j$ as $j$ varies. For 
example, Sturmfels and Xi \cite{SX} point out that the celebrated Verlinde formula  gives  $h_A (j)$ for $j = (n+1) 
\frac{d+1}{2}$ where $n$ is odd by assumption if $d$  is even (see Remark~\ref{rem:Verlinde}). We compute the 
Hilbert function of $A$ in a different degree. Namely, we determine the maximum degree $j$ such that $[A]_j \neq 0$. This integer $j$ is called the Castelnuovo-Mumford regularity of $A$, denoted $\reg A$. Moreover, we find the Hilbert function in degree $j = \reg A$ (see Theorem~\ref{thm:n+3 points}). 

The above results have consequences for the so-called containment problem. Given a homogeneous ideal $I$ of 
$R$, the problem is to determine all pairs of integers $(m, k)$ such that  $I^{(k)} \subset I^m$. By now there is an 
extensive literature about this question (see, e.g., \cite{BH, MFO, DHST, GHVT, HaHu, HH}). In order to address the 
containment problem, Bocci and Harbourne \cite{BH} pioneered the use of some asymptotic invariants such as the 
Waldschmidt constant and the resurgence. Determining these invariants for a given ideal is often difficult. Chudnovsky \cite{Chud} and Demailly \cite{Demailly} proposed lower bounds for the Waldschmidt constant of the ideal of a set of points. We establish these conjectures in new instances (see Corollaries~\ref{cor:Chud} and \ref{cor:Dem}). 
Furthermore, we determine the resurgence in new cases (see Corollary~\ref{cor:resurgence}). 

Finally we apply our results to the study of the weak Lefschetz property. A graded Artinian algebra $A$ is said to have the \emph{weak Lefschetz property (WLP)} if it has a linear form $\ell$ such  that multiplication by $\ell$ on $A$  has 
maximal rank from each degree to the next. Deciding if an algebra has the WLP is often a delicate problem,  and 
there is a rich literature on this topic (see, e.g., \cite{Cook, DIV,  HMNW, M, SS, S1, S2}). Of particular interest is the 
case, where $A = R/I$ and the ideal $I$ is generated by powers of general linear forms (see, e.g., \cite{HSS, MMN2, 
MM, MN}). If $A$ is an almost complete intersection, that is, $I$ is generated by $n+2 = 1 + \dim R$ powers,  a 
systematic study of the presence of the WLP was begun in \cite{MMN2}. In particular, if all powers have the same 
degree a complete characterization of the presence of the WLP was proposed. We establish this and a related conjecture in 
\cite{HSS} asymptotically (see Theorem~\ref{thm:asympt wlp failure}). To this end we also derive a result on Eulerian numbers where we utilize a connection to the theory of uniform $B$-splines. 

This note is organized as follows. After presenting some initial results in Section \ref{sec;background}, we consider Hermite interpolation for  sets of $n+2$ points in $\PP^n$ in Section \ref{sec:n+2}. The following section is focused on the case of sets with $n+3$ points. Applications to the containment problem are derived in Section~\ref{sec:containment}. In Section~\ref{sec:wlp} we consider the WLP and also establish the needed result on Eulerian numbers. Open questions motivated by this work are discussed in the final section. 
In particular, we offer a conjecture on properties of differences of Eulerian numbers.


\section{Preparatory Results}  
     \label{sec;background}

Throughout this paper $R$ denotes a polynomial ring 
$K[x_0,\ldots,x_n]$ over an arbitrary field $K$ with its standard grading where $\deg x_i = 1$.  If $I \subset R$ is a homogeneous ideal, then the $K$-algebra $A = \oplus_{j \ge 0} [A]_j$ is  standard graded. Its \emph{Hilbert function} is a map  
$h_A: \Z \to \Z, h_A (j) = \dim_K [A]_j$. The \emph{Castelnuovo-Mumford regularity} is an important invariant of $A$ as it is 
determined by the degrees of the syzygies in a minimal free resolution of $A$ over $R$ or, equivalently, by vanishing of 
cohomology groups. If $A \neq 0$ is artinian it is simply the number (see, e.g., \cite[Corollary 4.4]{Ei})
\[
\reg (A) = \max \{j \; | \; [A]_j \neq 0\} 
\]

We will often use the following duality result. 

\begin{theorem}[\cite{EI}] 
       \label{thm:duality}  
Let   $\wp_1, \dots, \wp_s$ be the ideals of  $s$ distinct points in $\mathbb P^{n}$ that are dual to  linear forms $\ell_1,\dots,\ell_s \in R$. 
Let $(\ell_1^{a_1} ,\dots,\ell_s^{a_s}) \subset R$ be an ideal generated by powers of  $s$ linear forms with positive integers $a_1,\dots,a_n$.   Then one has,  for each integer $j \ge -1 + \max \{a_1,\ldots,a_s\}$,
\[
\dim_K \left [R/ (\ell_1^{a_1} ,\dots,\ell_s^{a_s})  \right ]_j =
\dim_K \left [ \bigcap_{a_i \le j}  \wp_i^{j-a_i +1} \right ]_j .
\]
\end{theorem}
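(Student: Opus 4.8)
The plan is to deduce the formula from Macaulay's theory of inverse systems, set up in a characteristic-free way through the \emph{contraction} action. Let $S = K[x_0,\dots,x_n]$ act on $R$ by $x^\alpha \circ x^\beta = x^{\beta-\alpha}$ when $\beta-\alpha$ has only nonnegative entries, and $x^\alpha \circ x^\beta = 0$ otherwise. For each $j$ this gives a perfect pairing $\langle\,,\,\rangle\colon S_j\times R_j\to K$, $\langle f,g\rangle = f\circ g$, in which distinct monomials are orthogonal; after fixing the monomial bases this identifies $S_j$ with $R_j$, so we may regard $\langle\,,\,\rangle$ as a perfect pairing on $R_j$. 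Two formal facts drive the argument. First, for a homogeneous ideal $I$ generated in degrees $\le j$ one has $\dim_K [R/I]_j = \dim_K (I_j)^\perp$ (just rank-nullity for the perfect pairing). Second, $\circ$ is adjoint to multiplication: $\langle fg,h\rangle = \langle g, f\circ h\rangle$ for all $f,g\in S$, $h\in R$.

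The first step is to rewrite the left-hand side. Since $(\ell_1^{a_1},\dots,\ell_s^{a_s})_j = \sum_{a_i\le j}\ell_i^{a_i}R_{j-a_i}$, the two facts above give
\[
\dim_K\left[R/(\ell_1^{a_1},\dots,\ell_s^{a_s})\right]_j = \dim_K \bigcap_{a_i\le j}\bigl(\ell_i^{a_i}R_{j-a_i}\bigr)^\perp, \qquad \bigl(\ell_i^{a_i}R_{j-a_i}\bigr)^\perp = \{F\in R_j : \ell_i^{a_i}\circ F = 0\}.
\]
So everything reduces to an apolarity lemma: if $\ell = \ell_P$ is the linear form dual to a point $P\in\PP^n$ with homogeneous ideal $\wp_P$, and $a\le j$, then $\{F\in R_j : \ell^a\circ F = 0\} = [\wp_P^{\,j-a+1}]_j$. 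I would prove this by choosing coordinates adapted to $\ell$, i.e.\ with $\ell = x_0$, so that $P = (1:0:\dots:0)$ and $\wp_P = (x_1,\dots,x_n)$; then $x_0^a\circ F = 0$ says exactly that every monomial of $F$ has $x_0$-degree at most $a-1$, equivalently total $(x_1,\dots,x_n)$-degree at least $j-a+1$, which is the claim. The point requiring care is that the contraction pairing is equivariant for $GL_{n+1}$ only after twisting one copy of $R$ by the contragredient representation, and it is precisely this twist that makes the point \emph{dual} to $\ell$ (rather than $\ell$ itself) appear, so one must confirm it is compatible with $\circ$. Using contraction rather than differentiation is what keeps the lemma valid over an arbitrary field.

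Assembling the pieces, $\bigcap_{a_i\le j}\{F\in R_j : \ell_i^{a_i}\circ F = 0\} = \bigcap_{a_i\le j}[\wp_i^{\,j-a_i+1}]_j = \bigl[\bigcap_{a_i\le j}\wp_i^{\,j-a_i+1}\bigr]_j$, which yields the stated equality. The hypothesis $j\ge -1+\max\{a_1,\dots,a_s\}$ is where one checks the bookkeeping of which generators actually contribute in degree $j$; in fact the computation shows that a generator $\ell_i^{a_i}$ with $a_i > j$ imposes no condition on $R_j$ at all, so the identity even survives without that restriction, but stating it this way matches how it is applied. I expect the main obstacle to be a genuinely characteristic-free treatment of the apolarity lemma, including the point-duality twist; the remainder is linear algebra and monomial bookkeeping.
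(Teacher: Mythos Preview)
The paper does not prove this statement; it is quoted from Emsalem--Iarrobino \cite{EI}. Your strategy via inverse systems and apolarity is the standard one and is essentially the argument behind \cite{EI}, so the overall plan is sound.

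The gap is in the apolarity lemma. Your claimed equality $\{F\in R_j:\ell^a\circ F=0\}=[\wp_P^{\,j-a+1}]_j$ is \emph{false} for the contraction action when $\ell$ is not a coordinate variable. Take $n=1$, $j=2$, $a=1$, $\ell=x_0+x_1$: the contraction kernel is spanned by $x_0^2-x_0x_1+x_1^2$, whereas $[\wp_P^{\,2}]_2$ is spanned by $(x_0-x_1)^2=x_0^2-2x_0x_1+x_1^2$, and these are distinct lines in $R_2$ over any field. Your coordinate-change argument does not rescue this: contraction is defined relative to a fixed monomial basis, so the verification carried out for $x_0$ is a statement about a \emph{different} pairing and does not transfer back to the original one; you correctly flagged the $GL_{n+1}$-equivariance as the delicate point, but it does not resolve in your favour. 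The remedy in characteristic zero is to replace contraction by partial differentiation: then $\{F\in R_j:\partial_\ell^{\,a}F=0\}=[\wp_P^{\,j-a+1}]_j$ holds as an equality of subspaces for every $\ell$, and the rest of your argument goes through unchanged. Your assertion that contraction is what makes the proof characteristic-free is in fact backwards: over a field of characteristic $2$ the theorem as stated fails (with $n=1$, $\ell_1=x_0$, $\ell_2=x_1$, $\ell_3=x_0+x_1$, all $a_i=2$, $j=2$, one has $(\ell_3)^2=\ell_1^2+\ell_2^2$, so the left side equals $1$ while the right side equals $0$). A genuinely characteristic-free formulation must place the inverse system in the divided-power dual of $R$, not in $R$ itself.
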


If the points defined by the ideals $\wp_i$ are general  points, then the dimension of the linear system  $[ \wp_1^{b_1} \cap \dots \cap \wp_n^{b_s} ]_j\subset [R]_j $ depends only on the numbers $n, j, b_1,\ldots,b_s$. In order to simplify notation, in this case we denote by 
\[
{\mathcal L}_{n}(j; b_1, b_2,\cdots ,b_s)
\]
the linear system  $[ \wp_1^{b_1} \cap \dots \cap \wp_n^{b_s} ]_j\subset [R]_j $. Note that we view it as a $K$-vector space, not a projective space, when we compute dimensions.
At times we use superscripts to indicate repeated entries. For example,
$\mathcal L_2(j; 5^2, 2^3) = \mathcal L_2 (j; 5, 5, 2, 2, 2)$.

Using Cremona transformations, one can relate two different linear systems. This is often stated only for general  points. We need a more inclusive statement. 

A finite set $Z = \{P_0,\ldots,P_s\}  \subset \PP^n$ of points is said to be in \emph{linearly general position} if any subset of $Z$ with $r +1 \le n+1$ points spans an $r$-dimensional linear subspace. If $Z$ has this property and $s \ge n+1$, then, possibly after a suitable coordinate transformation,  we may assume that $P_0,\ldots,P_n$ are the coordinate points of $\PP^n$. The Cremona transformation (with respect to the coordinate points) is the map
\[
\Cr: \PP^n \setminus V(x_0 \cdots x_n) \to \PP^n, \; 
(a_0 : \ldots : a_n) \mapsto (\frac{1}{a_0} : \ldots : \frac{1}{a_n}). 
\]
Since $Z$ is in linearly general position $\Cr (P_i)$ is defined whenever $i > n$. 

Denoting the ideal of a point $P \in \PP^n$ by $I_P$, one has the following result. 

\begin{lemma}
  \label{lem:Cremona}
Let $Z = \{P_0,\ldots,P_s\}  \subset \PP^n$  be a finite set of  points in linearly general position, where 
$n \ge   2$. Let $j, b_1,\ldots,b_s$ be non-negative integers, with $b_1 \ge \dots \ge b_s$.  Set $t = (n-1) j - (b_1 + \cdots + b_{n+1})$. If $b_i + t \ge 0$ for all $i = 0,\ldots,n$, then
\[
\dim_K \left [\bigcap_{i=0}^s  I_{P_i}^{b_i} \right]_j = \dim_K \left [\bigcap_{i=0}^n  I_{P_i}^{b_i+t} \cap \bigcap_{i= n+1}^s I_{P_i}^{b_i} \right ]_{j+t}.
\]
\end{lemma}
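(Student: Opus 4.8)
The plan is to realize the Cremona transformation $\Cr$ as a quadratic Cremona map based at the coordinate points $P_0,\ldots,P_n$ and track how it transforms the linear system, following the classical computation but verifying that everything goes through under the weaker hypothesis (linearly general position instead of general points). Concretely, I would first reduce to the case where $P_0,\ldots,P_n$ are the coordinate points of $\PP^n$, which is permissible since $Z$ is in linearly general position and coordinate changes preserve all the dimensions in question. Then I would write down the standard Cremona map and recall its effect on a homogeneous polynomial of degree $j$: the pullback of $x_0\cdots x_n$-dehomogenized data multiplies degrees by $n$, and, after clearing the exceptional divisors, a form vanishing to order $b_i$ at $P_i$ corresponds to a form of degree $(n-1)j - (b_1+\cdots+b_{n+1})$ higher (this is exactly the quantity $t$ defined in the statement — note the indexing, $t$ is built from the top $n+1$ multiplicities $b_1,\ldots,b_{n+1}$) and with multiplicities $b_i + t$ at the coordinate points $\Cr(P_i) = P_i$ for $i \le n$, while the multiplicities $b_i$ at the remaining points $\Cr(P_i)$ for $i > n$ are unchanged.

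The key technical steps, in order, would be: (1) set up the birational map $\Cr$ explicitly on $\PP^n \setminus V(x_0\cdots x_n)$ and note $\Cr$ is an involution; (2) establish the degree formula $\deg \Cr^*(F) = nj$ for the naive pullback of a degree-$j$ form $F$, and identify the base locus contributions — each coordinate hyperplane $V(x_i)$ appears in $\Cr^*(F)$ with some multiplicity; (3) compute that multiplicity in terms of the $b_i$'s, using that $F$ vanishes to order $b_i$ at $P_i$, to see that after dividing $\Cr^*(F)$ by the appropriate monomial in the $x_i$ one lands in degree $j + t$; (4) check that the resulting form has multiplicity exactly $b_i + t$ at each coordinate point $P_i$ ($i = 0,\ldots,n$) — here the condition $b_i + t \ge 0$ is precisely what guarantees this "new multiplicity" is a genuine non-negative vanishing order and no cancellation forces something larger — and multiplicity $b_i$ at $\Cr(P_i)$ for $i \ge n+1$; (5) observe the construction is reversible via the involution $\Cr$, giving a $K$-linear isomorphism between the two linear systems, hence the equality of dimensions.

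The one point that genuinely requires the linearly-general-position hypothesis (rather than genericity) is step (4): one needs that the points $\Cr(P_i)$ for $i > n$ are well-defined (which holds since no $P_i$ with $i>n$ lies on a coordinate hyperplane — that would force $n+1$ of the points to lie in a hyperplane, contradicting linear general position) and that the local behavior of $\Cr$ at each base point $P_i$ ($i \le n$) is the standard one, so that the multiplicity transformation law $b_i \mapsto b_i + t$ is exact. I would verify this by a local computation at $P_0 = (1:0:\cdots:0)$: in the affine chart the map is a monomial transformation $(y_1,\ldots,y_n) \mapsto (1/y_1,\ldots,1/y_n)$ up to reordering, whose effect on vanishing orders along the exceptional and coordinate directions is elementary to compute, and linear general position ensures the other points $P_i$ impose independent "honest" point conditions that are simply permuted/translated by $\Cr$.

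I expect the main obstacle to be bookkeeping: carefully matching the indexing convention in the statement (where $t$ is defined using $b_1,\ldots,b_{n+1}$, i.e.\ the $n+1$ largest multiplicities, which presupposes $s \ge n+1$ and that $P_0,\ldots,P_n$ have been chosen as the coordinate points \emph{after} reordering so that their multiplicities are the largest — or else understanding why the formula is independent of which $n+1$ points are blown up), and making sure the degree-$t$ shift and the multiplicity shifts are consistent across all the cases ($b_i + t \ge 0$ possibly with equality, $t$ possibly zero or negative). The geometric content is classical; the work is in stating it cleanly enough that it applies verbatim in the non-generic setting needed later in the paper.
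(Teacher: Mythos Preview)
Your proposal is correct and follows exactly the approach the paper indicates: the paper's own proof is a one-line citation stating that the argument for general points (from \cite{Dumnicky}) goes through verbatim for points in linearly general position, and your plan is precisely to unpack that classical Cremona computation and verify that only linear general position is needed. Your observation about the indexing of $t$ (using $b_1,\ldots,b_{n+1}$ versus the base points $P_0,\ldots,P_n$) is a genuine bookkeeping wrinkle in the statement as written, but it does not affect the underlying argument.
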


\begin{proof}
The argument for general points (see, e.g., \cite[Theorem 3]{Dumnicky}) works also for points in linearly general position. 
\end{proof}

Using the above notation, one gets the more familiar statement for general points (see \cite{Dumnicky, LU, Nagata}).  

\begin{corollary}
  \label{cor:Cremona}
Let $n \ge   2$ and let $j, b_1,\ldots,b_s$ be non-negative integers, with $b_1 \ge \dots \ge b_s$.  Set $t = (n-1) j - (b_1 + \cdots + b_{n+1})$. If $b_i + t \ge 0$ for all $i = 1,\ldots,n+1$, then
\[
\dim_K \mathcal L_n (j; b_1,\ldots,b_s) = \dim_K \mathcal L_n (j + t; b_1 +t,\ldots,b_{n+1} +t , b_{n+2},\ldots,b_s).
\]
\end{corollary}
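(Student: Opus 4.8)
The statement to prove is Corollary~\ref{cor:Cremona}, which asserts that for general points the dimension of $\mathcal L_n(j; b_1,\ldots,b_s)$ equals that of $\mathcal L_n(j+t; b_1+t,\ldots,b_{n+1}+t, b_{n+2},\ldots,b_s)$, under the hypothesis $b_i + t \ge 0$ for $i = 1,\ldots,n+1$.

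The plan is to deduce this directly from Lemma~\ref{lem:Cremona} by specializing to general points. First I would invoke the fact that a set of $s+1$ general points in $\PP^n$ is in particular in linearly general position — indeed, being in linearly general position is a Zariski-open condition, and a generic configuration satisfies it. So pick $Z = \{P_0,\ldots,P_s\}$ general; then Lemma~\ref{lem:Cremona} applies verbatim. The arithmetic setup matches: the quantity $t = (n-1)j - (b_1 + \cdots + b_{n+1})$ is defined identically in both statements, and the hypothesis ``$b_i + t \ge 0$ for $i = 1,\ldots,n+1$'' is exactly what Lemma~\ref{lem:Cremona} requires (its indexing starts at $0$, but this is merely a relabeling of the same $n+1$ points whose multiplicities get shifted). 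Applying the lemma gives
\[
\dim_K \left[\bigcap_{i=0}^s I_{P_i}^{b_i}\right]_j = \dim_K \left[\bigcap_{i=0}^n I_{P_i}^{b_i+t} \cap \bigcap_{i=n+1}^s I_{P_i}^{b_i}\right]_{j+t}.
\]
The left-hand side is by definition $\dim_K \mathcal L_n(j; b_1,\ldots,b_s)$ (since the points are general, the dimension depends only on $n$, $j$, and the multiplicities, and reindexing $b_0,\ldots,b_s$ as $b_1,\ldots,b_s$ — or simply accepting $s+1$ versus $s$ points as a cosmetic discrepancy in bookkeeping — does not affect this). The right-hand side is, again by the definition of the $\mathcal L_n$ notation for general points, exactly $\dim_K \mathcal L_n(j+t; b_1+t,\ldots,b_{n+1}+t, b_{n+2},\ldots,b_s)$.

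I do not expect any real obstacle here: the corollary is a pure specialization of the lemma, and the only things to check are (i) that general points lie in linearly general position, which is standard, and (ii) that the combinatorial data (the value of $t$, the shifted multiplicities, the index ranges) line up — a routine matching of notation. The one point requiring a sentence of care is the off-by-one in the number of points ($s$ versus $s+1$) and the index base ($0$ versus $1$) between the lemma and the corollary; I would simply remark that these are harmless relabelings, since for general points only the multiset of multiplicities, together with $n$ and the degree, determines the dimension, and that multiset is preserved. If one wants to be fully pedantic, one applies Lemma~\ref{lem:Cremona} to the set $\{P_1,\ldots,P_{s+1}\}$ of $s+1$ general points with multiplicities $b_1 \ge \cdots \ge b_{s+1}$ (allowing $b_{s+1}=0$ if needed), and reads off the conclusion.
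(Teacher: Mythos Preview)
Your approach is correct and matches the paper's: the corollary is presented there as an immediate specialization of Lemma~\ref{lem:Cremona} to general points, with no further argument given beyond the remark that general points are in linearly general position. Your discussion of the index shift and the off-by-one bookkeeping is more detailed than anything the paper provides, but the underlying idea is identical.
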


Sometimes another simplification is possible. 

\begin{lemma}
  \label{lem:Bezout}
Let $Z = \{P_1,\ldots,P_s\}  \subset \PP^n$  be a finite set of  points in linearly general position, where $n \ge   2$, and    
let $b_1,\ldots,b_s$ be non-negative integers. If $j, b_1,\ldots,b_n$ are positive integers and   $b_1 + \cdots + b_{n} > (n-1) j$, then  
\[
\dim_K \left [\bigcap_{i=1}^s  I_{P_i}^{b_i} \right]_j = \dim_K \left [\bigcap_{i=1}^n  I_{P_i}^{b_i-1} \cap \bigcap_{i= n+1}^s I_{P_i}^{b_i} \right ]_{j-1}.
\]
\end{lemma}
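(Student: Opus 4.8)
The plan is to factor out the linear form cutting out the hyperplane through the first $n$ points. Since $Z$ is in linearly general position, the points $P_1,\dots,P_n$ span a hyperplane $H\subset\PP^n$; fix a linear form $h\in R$ with $H=V(h)$. As $h$ is linear and vanishes at each of $P_1,\dots,P_n$, it vanishes to order exactly $1$ at $P_i$ for $i\le n$, and for $i>n$ the points $P_1,\dots,P_n,P_i$ span $\PP^n$ (again by linearly general position), so $P_i\notin H$ and $h(P_i)\ne 0$. Since $\operatorname{mult}_{P_i}$ is additive on products, multiplication by $h$ sends $G\in\left[\bigcap_{i=1}^n I_{P_i}^{b_i-1}\cap\bigcap_{i=n+1}^s I_{P_i}^{b_i}\right]_{j-1}$ to a form $hG$ of degree $j$ with $\operatorname{mult}_{P_i}(hG)\ge (b_i-1)+1=b_i$ for $i\le n$ and $\operatorname{mult}_{P_i}(hG)\ge b_i$ for $i>n$; here the hypothesis $b_i\ge 1$ for $i\le n$ is used so that $I_{P_i}^{b_i-1}$ is defined. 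Thus multiplication by $h$ is a well-defined injection into $\left[\bigcap_{i=1}^s I_{P_i}^{b_i}\right]_j$.

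The main point is that this map is onto, equivalently that $h$ divides every $F\in\left[\bigcap_{i=1}^s I_{P_i}^{b_i}\right]_j$. To see this, restrict such an $F$ to $H$. Restriction to a linear subspace containing a point cannot lower the multiplicity of a form at that point, so $F|_H$ is a form of degree $j$ on $H\cong\PP^{n-1}$ vanishing to order at least $b_i$ at $P_i$ for $i=1,\dots,n$. Choosing homogeneous coordinates on $H$ in which $P_1,\dots,P_n$ are the coordinate points, a degree-$j$ monomial vanishes to order $j-a_i$ at the $i$-th coordinate point, where $a_i$ is the exponent of the $i$-th variable, so every monomial occurring in $F|_H$ must satisfy $a_i\le j-b_i$ for all $i$. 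Summing these inequalities gives $j=\sum_{i=1}^n a_i\le nj-\sum_{i=1}^n b_i$, that is, $\sum_{i=1}^n b_i\le (n-1)j$, contradicting the hypothesis unless $F|_H=0$. Hence $F|_H=0$, i.e.\ $h\mid F$.

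Writing $F=hG$ with $\deg G=j-1$ and reading the multiplicity computations of the first paragraph backwards — $\operatorname{mult}_{P_i}(G)=\operatorname{mult}_{P_i}(F)-1\ge b_i-1$ for $i\le n$ and $\operatorname{mult}_{P_i}(G)=\operatorname{mult}_{P_i}(F)\ge b_i$ for $i>n$ — shows that $G$ lies in $\left[\bigcap_{i=1}^n I_{P_i}^{b_i-1}\cap\bigcap_{i=n+1}^s I_{P_i}^{b_i}\right]_{j-1}$. Therefore multiplication by $h$ is an isomorphism of $K$-vector spaces between the two spaces in the statement, which gives the asserted equality of dimensions. The only step requiring an idea is the divisibility $h\mid F$: when $n=2$ it is exactly B\'ezout's theorem on the line $H\cong\PP^1$ (a nonzero form of degree $j$ on a line has at most $j$ zeros with multiplicity), which accounts for the name of the lemma, and the monomial count above is its generalization to coordinate points in $\PP^{n-1}$; everything else is routine bookkeeping with orders of vanishing under multiplication and division by $h$.
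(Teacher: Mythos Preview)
Your proof is correct and follows exactly the approach the paper indicates: the paper's one-line argument states that the numerical hypothesis forces the linear form through $P_1,\dots,P_n$ to divide every element of $\left[\bigcap_{i=1}^s I_{P_i}^{b_i}\right]_j$ and cites Dumnicki for the details, while you have written those details out in full (the restriction-to-$H$ monomial count and the bookkeeping of multiplicities under multiplication and division by $h$).
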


\begin{proof}
This follows as for general points (see \cite[Theorem 4]{Dumnicky}) because the numerical assumption implies that the linear form defining the hyperplane spanned by the first $n$ points divides every form in 
$ \left [\bigcap_{i=1}^s  I_{P_i}^{b_i} \right]_j $. 
\end{proof}

For general sets of points, the last statement takes the following form 
(see \cite{Dumnicky, LU}). 

\begin{corollary}
  \label{cor:Bezout}
Let $n \ge   2$ and let $b_1,\ldots,b_s$ be non-negative integers. If $j, b_1,\ldots,b_n$ are positive integers and   $b_1 + \cdots + b_{n} > (n-1) j$, then  
\[
\dim_K \mathcal L_n (j; b_1,\ldots,b_s) = \dim_K \mathcal L_n (j -1; b_1 -1,\ldots,b_{n} -1 , b_{n+1},\ldots,b_s).
\]
\end{corollary}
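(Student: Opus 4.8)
The plan is to deduce this from Lemma~\ref{lem:Bezout} by specializing to a general set of points. First I would fix a set $Z = \{P_1,\ldots,P_s\} \subset \PP^n$ of $s$ general points. Such a set is in particular in linearly general position, since the condition that some $r+1 \le n+1$ of the points fail to span an $r$-plane is a nontrivial closed condition on configurations; hence Lemma~\ref{lem:Bezout} applies verbatim to $Z$.

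Next I would match hypotheses. The assumptions here, namely $n \ge 2$, the integers $j, b_1,\ldots,b_n$ positive, and $b_1 + \cdots + b_n > (n-1)j$, are exactly those of Lemma~\ref{lem:Bezout}. Moreover, since $b_1,\ldots,b_n$ are positive, the exponents $b_1 - 1,\ldots,b_n - 1$ on the right-hand side are non-negative, so both sides of the claimed equality are of the shape $\dim_K[\bigcap_i I_{P_i}^{c_i}]_{j'}$ with non-negative exponents. By the definition recalled just before Lemma~\ref{lem:Bezout}, and because $Z$ is general (so these dimensions depend only on $n$, the degree, and the multiplicities), the left side is $\dim_K \mathcal L_n(j; b_1,\ldots,b_s)$ and the right side is $\dim_K \mathcal L_n(j-1; b_1 - 1,\ldots,b_n - 1, b_{n+1},\ldots,b_s)$.

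Finally, applying Lemma~\ref{lem:Bezout} to $Z$ gives
\[
\dim_K \left[ \bigcap_{i=1}^s I_{P_i}^{b_i} \right]_j = \dim_K \left[ \bigcap_{i=1}^n I_{P_i}^{b_i - 1} \cap \bigcap_{i=n+1}^s I_{P_i}^{b_i} \right]_{j-1},
\]
which, rewritten in the $\mathcal L_n$ notation, is precisely the asserted identity. I expect no real obstacle: the only point needing a word is that a general point set is in linearly general position, so that the more flexible Lemma~\ref{lem:Bezout} can be invoked; the remainder is bookkeeping with the $\mathcal L_n$ notation. (Should a self-contained argument be preferred, one could instead transcribe the proof of Lemma~\ref{lem:Bezout}: the Bézout-type inequality $b_1 + \cdots + b_n > (n-1)j$ forces the linear form cutting out the hyperplane through $P_1,\ldots,P_n$ to divide every degree-$j$ element of $\bigcap_i I_{P_i}^{b_i}$, and division by that form induces the desired isomorphism of $K$-vector spaces.)
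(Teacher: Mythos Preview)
Your proposal is correct and follows exactly the route the paper intends: the corollary is simply the specialization of Lemma~\ref{lem:Bezout} to a set of general points, using that general points are in linearly general position so that the $\cL_n$ notation applies. The paper itself does not spell this out beyond a one-line remark and a citation, so your write-up is if anything more detailed than what appears there.
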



\section{Sets of $n+2$ points}  
    \label{sec:n+2}

The goal of this section is to determine the initial degree of any uniform fat point scheme that is supported at $n+2$ points of $\PP^n$ that span $\PP^n$, where $n \ge 2$. We begin by considering the case, where the support is in linearly general position. This is the same as a set of $n+2$ general points. Nevertheless, we prefer the first description as it gives precisely the needed assumption on the support and is also meaningful if the base field $K$ is finite. 

Abusing notation slightly, we say that  a set of linear forms in $R$ is in linearly general position if the set of dual points in $\PP^n$ has this property. 

\begin{lemma}
    \label{lem:n+2 upper reg bound}
Let $\ell_1\ldots,\ell_{n+2} \in R= K[x_0,\ldots,x_n]$ be $n+2$ linear forms in linearly general position. Fix a positive integer $d$ and set $r   =\left \lfloor \frac{(n+2) (d-1)}{2} \right \rfloor$. Then one has 
\[
[R/(\ell_1^d,\ldots,\ell_{n+2}^d )]_{r+1} = 0. 
\]
\end{lemma}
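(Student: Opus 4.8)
The plan is to pass, via the duality of Theorem~\ref{thm:duality}, from the algebra $A=R/(\ell_1^d,\dots,\ell_{n+2}^d)$ to a linear system of fat points in $\PP^n$, and then to annihilate that system with a single Cremona transformation. Write $j=r+1$, and let $\wp_1,\dots,\wp_{n+2}\subset R$ be the ideals of the points $P_1,\dots,P_{n+2}\in\PP^n$ dual to $\ell_1,\dots,\ell_{n+2}$, so that the $P_i$ are in linearly general position. Since $n\ge2$ we have $r\ge2(d-1)\ge d-1$, so Theorem~\ref{thm:duality} applies and gives
\[
\dim_K[A]_j=\dim_K\Big[\bigcap_{i=1}^{n+2}\wp_i^m\Big]_j,\qquad m:=j-d+1=r-d+2.
\]
Hence it suffices to prove that no nonzero form of degree $j$ vanishes to order $\ge m$ at all of $P_1,\dots,P_{n+2}$. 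If $d=1$ this is clear, since then $j=1$ and $n+2$ linear forms in linearly general position span $[R]_1$; so from now on I assume $d\ge2$, whence $m\ge d\ge1$.

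Assume first that $n\ge3$. Here I would invoke Lemma~\ref{lem:Cremona} with the Cremona transformation based at any $n+1$ of the $n+2$ points. Since all multiplicities equal $m$, the relevant shift is $t=(n-1)j-(n+1)m$; writing $2r=(n+2)(d-1)-\varepsilon$ with $\varepsilon\in\{0,1\}$, a direct computation should give $t=\varepsilon-d-1$, and therefore
\[
j+t=m-(2-\varepsilon),\qquad m+t=\tfrac12\bigl((n-2)(d-1)+\varepsilon-2\bigr).
\]
The two things to check are that $m+t\ge0$ --- so that Lemma~\ref{lem:Cremona} is applicable --- which follows from $n\ge3$ and $d\ge2$ after a brief case check on parity (distinguishing $n=3$ from $n\ge4$), and that $j+t<m$, which is immediate since $\varepsilon\le1$. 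Granting these, Lemma~\ref{lem:Cremona} identifies $\bigl[\bigcap_{i=1}^{n+2}\wp_i^m\bigr]_j$ with a fat-point system in degree $j+t$ in which the one point not used by the Cremona map still carries multiplicity $m$. As $j+t<m$, no nonzero form of degree $j+t$ can vanish to order $m$ at a point, so this system is $0$, which proves the lemma for $n\ge3$.

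I expect the case $n=2$ to be the main obstacle: there $\varepsilon=0$ and $r=2(d-1)$, so $m+t=-1<0$, the Cremona transformation above is inadmissible, and a separate argument is required. In this case the claim says precisely that $\mathcal L_2(2d-1;d^4)=0$ for four points $P_1,\dots,P_4\in\PP^2$ in linearly general position, i.e.\ with no three collinear; enlarging the field, I may assume $K$ algebraically closed. The four points lie on a pencil of conics whose only singular members are the three pairs of lines joining complementary pairs of the $P_i$, so a general member $Q$ of the pencil is an irreducible conic through $P_1,\dots,P_4$. If $0\ne F\in\bigl[\bigcap_{i=1}^4\wp_i^d\bigr]_{2d-1}$ and $C=V(F)$, then $C$ has multiplicity $\ge d$ at each $P_i$ while $Q$ passes through $P_i$, so the intersection multiplicity $(C\cdot Q)_{P_i}$ is $\ge d$; if $C$ and $Q$ met properly, then on one hand $C\cdot Q=2(2d-1)=4d-2$ by B\'ezout, while on the other hand $C\cdot Q\ge\sum_{i=1}^4(C\cdot Q)_{P_i}\ge4d$, which is absurd --- so $Q\subseteq C$. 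Since the pencil contains infinitely many pairwise non-proportional irreducible conics, $F$ would then be divisible by $d$ pairwise coprime quadratic forms, forcing $\deg F\ge2d>2d-1$, a contradiction. Hence $\mathcal L_2(2d-1;d^4)=0$. (Alternatively, for $n=2$ one can iterate Lemma~\ref{lem:Bezout} to split off the six lines spanned by pairs of the $P_i$, reducing $\mathcal L_2(2d-1;d^4)$ to $\mathcal L_2(2d-7;(d-3)^4)$, and then induct on $d$, the cases $d\le3$ being trivial.)
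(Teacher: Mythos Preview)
Your argument is correct and, for $n\ge3$, coincides with the paper's proof: both pass to the fat-point system via Theorem~\ref{thm:duality} and kill it with a single Cremona transformation, the key observation being that the one point not touched by the Cremona map retains multiplicity $m=r+2-d$ while the new degree drops to $j+t=m-(2-\varepsilon)<m$.

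The only difference is in the handling of $n=2$. The paper iterates Lemma~\ref{lem:Bezout} two lines at a time, reducing $\mathcal L_2(2d-1;d^4)$ to $\mathcal L_2(2d-3;(d-1)^4)$ and eventually to $\mathcal L_2(1;1^4)=0$; this is exactly the alternative you sketch at the end. Your primary argument for $n=2$---running B\'ezout against a general conic in the pencil through the four points and deducing that $F$ would have to be divisible by arbitrarily many coprime quadratics---is a genuinely different and slightly more geometric route. It has the advantage of dispatching the case in one stroke rather than by induction on $d$, at the modest cost of passing to the algebraic closure; the paper's iterative approach stays within the elementary framework of Lemmas~\ref{lem:Cremona} and~\ref{lem:Bezout} and needs no field extension. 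Both are perfectly valid.
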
 

\begin{proof} We want to show that 
\[
D = \dim_K [R/(\ell_1^d,\ldots,\ell_{n+2}^d )]_{r+1}
\]
is zero. Using Theorem~\ref{thm:duality} we get  
\[
D = \dim_K \cL_n (r+1; (r+2 -d)^{n+2}). 
\]
Now we want to apply Lemma~\ref{lem:Cremona}. We compute 
\[
t = (n-1) (r+1) - (n+1)(r+2-d) = -2r -2 + (n+1)(d-1),  
\]
and so $r+2-d + t = -r-1 + n (d-1)$.  One easily checks that the  latter number is non-negative unless $n = 2$. Hence, if $n \neq 2$ Lemma~\ref{lem:Cremona} is applicable and  gives 
\[
D = \dim_K \cL_n (r-d; r+2-d, ( r+1-2d)^{n+1}). 
\]
The latter linear system is  trivial because no degree $r-d$ form is contained in the $(r-d+2)$-nd power of the ideal of a point. 

It remains to consider the case where $n=2$. Then we have 
\[
D = \dim_K \cL_2 (2d-1; d^4). 
\]
If $d \ge 2$ we may apply Lemma~\ref{lem:Bezout} twice and obtain
\begin{equation*}
\begin{split}
D & = \dim_K \cL_2 (2d-1; d^4) \\
& = \dim_K \cL_2 (2d-2; d^2, (d-1)^2) \\
& = \dim_K \cL_2 (2d-3; (d-1)^4)
\end{split}
\end{equation*}
Repeating this we get 
\[
D = \dim_K \cL_2 (1; 1^4) = \dim_K \cL_2 (0; 1^2) = 0, 
\]
which completes the argument. 
\end{proof}

We also need the following result about monomial complete intersections. 

\begin{lemma}
    \label{lem:Hilb ci}
For an integer $d \ge 2$, consider $B = R/(x_0^d, x_1^d, \ldots,x_n^d)$. It's Castelnuovo-Mumford regularity is $r = (n+1)(d-1)$. Furthermore, the Hilbert function of $B$ is strictly increasing on the closed interval $[0, \lfloor \frac{r}{2} \rfloor ]$. It takes its maximum value precisely at $\frac{r}{2}$ if $r$ is even  and at $\frac{r-1}{2}$, $\frac{r+1}{2}$ if $r$ is odd. 
\end{lemma}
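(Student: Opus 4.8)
The plan is to exploit the well-known product structure of the Hilbert series of a monomial complete intersection. Since $x_0^d,\ldots,x_n^d$ is a regular sequence in $R$, the Hilbert series of $B = R/(x_0^d,\ldots,x_n^d)$ factors as
\[
H_B(z) = \prod_{i=0}^{n} \frac{1 - z^d}{1 - z} = \left( 1 + z + \cdots + z^{d-1} \right)^{n+1}.
\]
In particular $h_B(j)$ is the coefficient of $z^j$ in the $(n+1)$-st power of the polynomial $1 + z + \cdots + z^{d-1}$, which is a nonnegative integer for $0 \le j \le (n+1)(d-1)$ and zero otherwise. This immediately gives $\reg B = \max\{j : h_B(j) \ne 0\} = (n+1)(d-1) = r$, using the description of regularity for Artinian algebras recalled before Theorem~\ref{thm:duality}.

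Next I would establish the symmetry and unimodality of the coefficient sequence. Writing $p(z) = 1 + z + \cdots + z^{d-1}$, the polynomial $p(z)$ is palindromic (its coefficient sequence is symmetric about $(d-1)/2$), hence so is $p(z)^{n+1}$, giving $h_B(j) = h_B(r-j)$ for all $j$; this yields the claimed location of the maximum \emph{provided} the sequence is unimodal, since a symmetric unimodal sequence attains its maximum exactly at the center (one value if $r$ is even, two adjacent values $\frac{r-1}{2},\frac{r+1}{2}$ if $r$ is odd). Unimodality of the coefficients of $p(z)^{n+1}$ is classical: $p(z) = \frac{1-z^d}{1-z}$ has only roots on the unit circle (the $d$-th roots of unity other than $1$), none of them positive real, so by the theorem that a polynomial with nonnegative coefficients and no positive real roots has a log-concave — hence unimodal — coefficient sequence (or, alternatively, by repeatedly convolving the flat sequence $(1,1,\ldots,1)$ with itself and using that the convolution of symmetric unimodal sequences is symmetric unimodal), $p(z)^{n+1}$ has log-concave, and in particular unimodal, coefficients. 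Finally, strict monotonicity on $[0,\lfloor r/2\rfloor]$ needs a little more than unimodality: I would argue that the sequence cannot be constant on two consecutive values in the first half. One clean way is to note $h_B(j) - h_B(j-1)$ equals the coefficient of $z^j$ in $(1-z)p(z)^{n+1} = (1-z^d)p(z)^n$, i.e. $h_B(j) - h_B(j-1) = p_n(j) - p_n(j-d)$ where $p_n(j)$ is the coefficient of $z^j$ in $p(z)^n$; for $j \le \lfloor r/2 \rfloor$ one checks $j < $ center of $p(z)^n$ shifted appropriately, and since $p_n$ is itself symmetric unimodal with the relevant strict inequalities this difference is strictly positive. (For $d = 2$ this is immediate since $h_B(j) = \binom{n+1}{j}$, strictly increasing up to the middle binomial coefficient.)

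The main obstacle is the strictness of the increase, not the location of the maximum: unimodality alone would permit a plateau, so the argument must rule out $h_B(j) = h_B(j-1)$ for $1 \le j \le \lfloor r/2\rfloor$. I expect the cleanest route is the log-concavity of $(h_B(j))$ together with the fact that the sequence is not eventually constant on the left tail (it starts $1, n+1, \ldots$), since a log-concave sequence of positive terms that is constant at two consecutive indices must be non-increasing from that point on, contradicting symmetry with a strictly larger central value whenever $r \ge 2$; the degenerate small cases ($d$ or $n$ very small) are checked directly. This reduces the whole lemma to standard facts about coefficients of $(1 + z + \cdots + z^{d-1})^{n+1}$.
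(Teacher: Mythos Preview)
The paper's own proof of this lemma is a one-line citation to \cite[Theorem~1]{RRR}, so there is no argument there to compare against; your approach of working directly with the coefficient sequence of $(1+z+\cdots+z^{d-1})^{n+1}$ is a genuine self-contained substitute.

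There is, however, one false step. The assertion that ``a polynomial with nonnegative coefficients and no positive real roots has a log-concave coefficient sequence'' is not a theorem: $1+z^2$ is already a counterexample. What is true is that a polynomial with only \emph{real} (hence nonpositive) roots has log-concave coefficients, by Newton's inequalities; but the roots of $1+z+\cdots+z^{d-1}$ are the nontrivial $d$-th roots of unity, which are not real once $d\ge 3$. Log-concavity of the coefficients of $(1+z+\cdots+z^{d-1})^{n+1}$ does hold, but it is a separate result and cannot be read off from the location of the roots in this way. Since your preferred route to strictness (``log-concave plus a plateau forces non-increase'') rests on this, that branch of the argument is not justified as written.

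Your fallback arguments are fine and suffice. The classical fact that a product of polynomials with symmetric, unimodal, nonnegative coefficient sequences again has such a sequence gives unimodality. For strict increase, your difference identity $h_B(j)-h_B(j-1)=p_n(j)-p_n(j-d)$, coming from $(1-z)p(z)^{n+1}=(1-z^d)p(z)^n$, does the job by induction on $n$: for $1\le j\le\lfloor (n+1)(d-1)/2\rfloor$ either $j-d<0$, or one checks $|\,j-\tfrac{n(d-1)}{2}\,|<|\,j-d-\tfrac{n(d-1)}{2}\,|$, so that strict unimodality of $p_n$ about its center forces $p_n(j)>p_n(j-d)$. The base case $n=1$ has coefficients $1,2,\ldots,d,\ldots,2,1$. (Note also that the lemma tacitly needs $n\ge 1$: for $n=0$ the Hilbert function is identically $1$.)
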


\begin{proof}
This follows from \cite[Theorem 1]{RRR}. 
\end{proof} 

\begin{proposition}
     \label{prop:reg n+2 gen} 
Let $\ell_1\ldots,\ell_{n+2} \in R = K[x_0,\ldots,x_n]$ be $n+2$ linear forms in linearly general position. Then one has for every positive integer $d$ that 
\[
\reg R/(\ell_1^d,\ldots,\ell_{n+2}^d ) =  \left \lfloor \frac{(n+2) (d-1)}{2} \right \rfloor
\]  
\end{proposition}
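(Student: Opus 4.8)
The plan is to dispose of the case $d = 1$ immediately and then, writing $A = R/(\ell_1^d,\dots,\ell_{n+2}^d)$ and $r = \lfloor \frac{(n+2)(d-1)}{2} \rfloor$, to establish the inequalities $\reg A \le r$ and $\reg A \ge r$ separately. If $d = 1$, the ideal $(\ell_1,\dots,\ell_{n+2})$ contains the $n+1$ linearly independent forms $\ell_1,\dots,\ell_{n+1}$, hence equals the irrelevant maximal ideal, so $A \cong K$ and $\reg A = 0 = r$. For $d \ge 2$ I first note that $A$ is artinian, because $\ell_1, \dots, \ell_{n+1}$ alone already span $[R]_1$, so that the formula $\reg A = \max\{ j : [A]_j \ne 0 \}$ is available.

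For the upper bound I would simply invoke Lemma~\ref{lem:n+2 upper reg bound}, which gives $[A]_{r+1} = 0$. Since $A$ is a standard graded quotient of $R$ and $R$ is generated in degree one, vanishing of $[A]_j$ forces vanishing of $[A]_{j+1}$; hence $[A]_j = 0$ for all $j \ge r+1$, so $\reg A \le r$.

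The real content is the lower bound $[A]_r \ne 0$, and the idea is to drop one of the generators. Set $B = R/(\ell_1^d, \dots, \ell_{n+1}^d)$. Because $\ell_1, \dots, \ell_{n+1}$ are linearly independent, a linear change of coordinates identifies $B$ with the monomial complete intersection $R/(x_0^d, \dots, x_n^d)$, which affects neither Hilbert function nor regularity. By Lemma~\ref{lem:Hilb ci}, writing $N := (n+1)(d-1)$, the Hilbert function $h_B$ is strictly increasing on the interval $[0, \lfloor N/2 \rfloor]$; and since $B$ is a complete intersection, hence Gorenstein, $h_B$ is symmetric, $h_B(j) = h_B(N - j)$. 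The presentation $A = B/(\overline{\ell_{n+2}^{\,d}})$ yields the exact sequence
\[
[B]_{r-d} \ \xrightarrow{\ \cdot \, \overline{\ell_{n+2}^{\,d}}\ } \ [B]_r \ \longrightarrow \ [A]_r \ \longrightarrow \ 0 ,
\]
so that $h_A(r) \ge h_B(r) - h_B(r-d)$, no matter what $\ell_{n+2}$ is; the hypothesis that $\ell_{n+2}$ is in linearly general position with the others is used only in the upper bound.

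It then remains to verify the elementary chain
\[
0 \ \le \ r - d \ < \ N - r \ \le \ \lfloor N/2 \rfloor
\]
under the standing hypotheses $n \ge 2$ and $d \ge 2$: the first inequality is equivalent to $n(d-1) \ge 2$; the second to $2r \le (n+2)(d-1) < (n+2)(d-1) + 1 = N + d$; and the third to $r \ge \lceil N/2 \rceil$, which holds because $(n+2)(d-1) \ge N + 1$. Granting these, strict monotonicity of $h_B$ on $[0, \lfloor N/2 \rfloor]$ together with the Gorenstein symmetry gives $h_B(r) = h_B(N-r) > h_B(r-d)$, hence $h_A(r) > 0$ and $\reg A \ge r$. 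This, with the upper bound, finishes the proof. The only step calling for any care is this last numerical bookkeeping, and it is precisely there that the assumption $d \ge 2$ intervenes; everything else is formal, given Lemmas~\ref{lem:n+2 upper reg bound} and~\ref{lem:Hilb ci}. (One could also derive the lower bound by upper semicontinuity from the generic configuration, but going through the monomial complete intersection seems cleaner and sidesteps having to relate "general" and "linearly general position" configurations.)
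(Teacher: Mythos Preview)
Your proof is correct and follows essentially the same approach as the paper: both arguments obtain the upper bound directly from Lemma~\ref{lem:n+2 upper reg bound} and the lower bound by passing to the complete intersection $B = R/(\ell_1^d,\dots,\ell_{n+1}^d)$, noting that the multiplication map $[B]_{r-d}\xrightarrow{\ \ell_{n+2}^d\ }[B]_r$ cannot be surjective because $h_B(r-d) < h_B(r)$ by Gorenstein symmetry together with the monotonicity in Lemma~\ref{lem:Hilb ci}. Your write-up is simply more explicit about the elementary inequalities $0 \le r-d < N-r \le \lfloor N/2\rfloor$ and about the trivial case $d=1$, and your remark that the general-position hypothesis on $\ell_{n+2}$ enters only through the upper bound is a nice bonus observation.
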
 

\begin{proof} 
Set $r   =\left \lfloor \frac{(n+2) (d-1)}{2} \right \rfloor$. 
Put $B = R/(\ell_1^d,\ldots,\ell_{n+1}^d )$ and consider the multiplication map $\times \ell_{n+2}^d \colon [B]_{r-d} \to [B]_r$. The Hilbert function of $B$ is symmetric, that is, $h_B (j) = h_B ((n+1)(d-1) - j$ for every integer $j$. Hence Lemma~\ref{lem:Hilb ci} shows $h_B (r-d) < h_B (r)$. It follows that $[B/\ell_{n+1}^d B]_r \neq 0$, and so $\reg B/\ell_{n+1}^d B \ge r$. Now equality follows by Lemma~\ref{lem:n+2 upper reg bound} because $B/\ell_{n+1}^d B \cong R/(\ell_1^d,\ldots,\ell_{n+2}^d )$. 
\end{proof}

\begin{remark} 
If $K$ is a field of characteristic zero, there is an alternate argument using  the strong Lefschetz property of the algebra $B$ (see \cite{S1, W} or \cite{RRR}). In fact, combined with Lemma~\ref{lem:Hilb ci} it gives that the map $\times \ell_{n+1}^d \colon [B]_{r-d+1} \to [B]_{r+1}$ is surjective. However, if $K$ has positive characteristic then $B$ does not necessarily have even the weak Lefschetz property  (see \cite{Cook}). In this case, there is some integer $j$ such that   $\times \ell_{n+1} \colon [B]_{j-1} \to [B]_{j}$ fails to have maximal rank. Nevertheless, Lemma~\ref{lem:n+2 upper reg bound} gives that $\times \ell_{n+1}^d \colon [B]_{r-d+1} \to [B]_{r+1}$ is always surjective, regardless of the characteristic of $K$. 
\end{remark}

In order to state a consequence of Proposition~\ref{prop:reg n+2 gen}, we denote the \emph{initial degree} of a homogeneous ideal $I \neq 0$ by 
\[
\alpha (I) = \min \{ j \in \Z \; | \; [I]_j \neq 0\}. 
\]

\begin{proposition}
   \label{prop:initial n+2 gen}
Let $Z \subset \PP^n$   be a subset of $n+2$ points in linearly general position. Then one has for every integer $k > 0$
\[
\alpha (I_Z^{(k)} )= 
\left \lceil \frac{(n+2) k}{n} \right \rceil. 
\]
\end{proposition}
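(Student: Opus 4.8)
The plan is to derive this from Proposition~\ref{prop:reg n+2 gen} by means of the duality in Theorem~\ref{thm:duality}. Denote by $\wp_1,\ldots,\wp_{n+2}$ the ideals of the points of $Z$, dual to linear forms $\ell_1,\ldots,\ell_{n+2} \in R$, so that $I_Z^{(k)} = \bigcap_{i=1}^{n+2} \wp_i^k$. First I would dispose of the degrees $j < k$: since $\wp_i^k$ is generated in degree $k$, one has $[I_Z^{(k)}]_j \subseteq [\wp_1^k]_j = 0$ for $j < k$, so that $\alpha(I_Z^{(k)}) \ge k$ and it remains to understand $[I_Z^{(k)}]_j$ for $j \ge k$.

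For $j \ge k$ put $d = j - k + 1$, so $d \ge 1$ and $j = d - 1 + k \ge d - 1$. Hence Theorem~\ref{thm:duality}, applied with $a_1 = \cdots = a_{n+2} = d$, yields
\[
\dim_K [I_Z^{(k)}]_j = \dim_K \Big[ \bigcap_{i=1}^{n+2} \wp_i^{\,j-d+1} \Big]_j = \dim_K [R/(\ell_1^d,\ldots,\ell_{n+2}^d)]_j .
\]
Now $A = R/(\ell_1^d,\ldots,\ell_{n+2}^d)$ is a standard graded Artinian algebra with $[A]_0 = K$, so that $[A]_j = 0$ forces $[A]_{j'} = 0$ for all $j' \ge j$; thus $h_A(j) \neq 0$ precisely for $0 \le j \le \reg A$. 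By Proposition~\ref{prop:reg n+2 gen} we have $\reg A = \lfloor \frac{(n+2)(d-1)}{2} \rfloor = \lfloor \frac{(n+2)(j-k)}{2} \rfloor$. Therefore $[I_Z^{(k)}]_j \neq 0$ if and only if $j \le \lfloor \frac{(n+2)(j-k)}{2} \rfloor$, which, since $j$ is an integer, is equivalent to $2j \le (n+2)(j-k)$, i.e.\ to $nj \ge (n+2)k$.

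Consequently $\alpha(I_Z^{(k)})$ is the smallest integer $j \ge k$ with $j \ge \frac{(n+2)k}{n}$, and since $\frac{(n+2)k}{n} > k$ this is exactly $\lceil \frac{(n+2)k}{n} \rceil$. I do not anticipate a genuine obstacle here: the whole argument is bookkeeping around Proposition~\ref{prop:reg n+2 gen}, and the only mildly delicate points — verifying the numerical hypothesis $j \ge -1 + \max\{a_i\}$ of Theorem~\ref{thm:duality} on the relevant range, and ruling out internal zeros of the Hilbert function of $A$ — are both immediate.
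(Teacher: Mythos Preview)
Your argument is correct and follows essentially the same route as the paper: apply Theorem~\ref{thm:duality} to identify $[I_Z^{(k)}]_j$ with $[R/(\ell_1^d,\ldots,\ell_{n+2}^d)]_j$ for $d=j-k+1$, invoke Proposition~\ref{prop:reg n+2 gen} for the regularity, and unwind the resulting inequality. You are simply more explicit than the paper about the range $j<k$, the hypothesis of Theorem~\ref{thm:duality}, and the absence of internal zeros in the Hilbert function of a standard graded Artinian algebra.
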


\begin{proof}
Combining Proposition~\ref{prop:reg n+2 gen} and Theorem~\ref{thm:duality}, we get for every integer $j$ that 
\[
0 \neq [R/(\ell_1^d,\ldots,\ell_{n+2}^d)]_j \cong [I_Z^{(j+1-d)}]_j 
\]
if and only if $j \le \left \lfloor \frac{(n+2) (d-1)}{2} \right \rfloor$. Setting $k = j+1-d$ the latter is equivalent to 
$j \le \left \lfloor \frac{(n+2) (j-k)}{2} \right \rfloor$, which is true if and only if $j \ge \frac{(n+2) k}{n}$. Now the claim follows. 
\end{proof}

Before extending this result, let us mention the analogous statements for $n+1$ points. 

\begin{remark}
    \label{rem:n+1 points}
Let $Z \subset \PP^n$ be a subset of $n+1$ points spanning $\PP^n$. Then we may take the variables $x_0,\ldots,x_n$ as linear forms that are dual to the given points. The regularity of a complete intersection is well-known. In particular, we get
\[
\reg R/(x_0^d,\ldots,x_n^d) = (n+1) (d-1). 
\]
Now Theorem~\ref{thm:duality} gives as in the above proof for every $k > 0$ that 
\[
\alpha (I_Z ^{(k)}) = \left \lceil \frac{(n+1) k}{n} \right \rceil. 
\]
\end{remark}

The main result of this section gives the announced extension of the above results. 

\begin{theorem}
    \label{thm:n+2 points}
Let $Z \subset \PP^n$ be a subset of $n+2$ points spanning $\PP^n$. Denote by $\ell_1,\ldots,\ell_{n+2}$ linear forms that are dual to the given points. Let $t$ be the least integer such that a subset of $t+2$ points of $Z$ is linearly dependent, and so $1 \le t \le n$. Then one has for every  $d > 0$ and $k > 0$ that 
\[
\reg R/(\ell_1^d,\ldots,\ell_{n+2}^d ) =  \left \lfloor \frac{(2n+2-t) (d-1)}{2} \right \rfloor \quad \text{and} \quad 
\alpha (I_Z^{(k)}) = \left \lceil \frac{(2n+2-t) k}{2n-t} \right \rceil. 
\]
\end{theorem}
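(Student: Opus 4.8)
The plan is to reduce the general case to the case of points in linearly general position, which was already handled in Propositions~\ref{prop:reg n+2 gen} and \ref{prop:initial n+2 gen}, by a careful degeneration/coordinate analysis exploiting the single linear dependence among the $n+2$ points. First I would set up coordinates adapted to the dependence: since $t$ is the least integer with $t+2$ of the points linearly dependent, after a coordinate change we may assume $P_1,\ldots,P_{t+2}$ span only a $t$-dimensional subspace (a $\PP^t$), while any $t+1$ of them are independent, and the remaining points together with this $\PP^t$ span $\PP^n$. Concretely, I expect to be able to arrange that the dual forms $\ell_1,\ldots,\ell_{t+2}$ involve only the variables $x_0,\ldots,x_t$ and are $t+2$ forms in linearly general position in $K[x_0,\ldots,x_t]$, while $\ell_{t+3},\ldots,\ell_{n+2}$ can be taken to be $x_{t+1},\ldots,x_n$ (or coordinate-like forms completing a basis).

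With this setup, the ring $R/(\ell_1^d,\ldots,\ell_{n+2}^d)$ becomes a tensor product over $K$: writing $R' = K[x_0,\ldots,x_t]$ and $R'' = K[x_{t+1},\ldots,x_n]$, we get
\[
R/(\ell_1^d,\ldots,\ell_{n+2}^d ) \cong \big(R'/(\ell_1^d,\ldots,\ell_{t+2}^d)\big) \otimes_K \big(R''/(x_{t+1}^d,\ldots,x_n^d)\big).
\]
The first factor is the algebra of $t+2$ general linear forms in $t+1$ variables, whose regularity is $\lfloor \frac{(t+2)(d-1)}{2}\rfloor$ by Proposition~\ref{prop:reg n+2 gen} applied with $n$ replaced by $t$; the second is a monomial complete intersection in $n-t$ variables with regularity $(n-t)(d-1)$ by Remark~\ref{rem:n+1 points}. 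Since regularity of a tensor product over a field of two Artinian graded algebras is the sum of the regularities (the top-degree socle pieces multiply to something nonzero, and degrees above that vanish), we obtain
\[
\reg R/(\ell_1^d,\ldots,\ell_{n+2}^d ) = \left\lfloor \frac{(t+2)(d-1)}{2}\right\rfloor + (n-t)(d-1) = \left\lfloor \frac{(2n+2-t)(d-1)}{2}\right\rfloor,
\]
the last equality being an elementary floor-function computation. The statement about $\alpha(I_Z^{(k)})$ then follows from the regularity formula exactly as in the proof of Proposition~\ref{prop:initial n+2 gen}: apply Theorem~\ref{thm:duality} to translate $[R/(\ell_1^d,\ldots,\ell_{n+2}^d)]_j \cong [I_Z^{(j+1-d)}]_j$, observe this is nonzero precisely when $j \le \lfloor \frac{(2n+2-t)(d-1)}{2}\rfloor$, substitute $k = j+1-d$, and solve the resulting inequality $j \ge \frac{(2n+2-t)k}{2n-t}$ for the least admissible integer $j$.

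The main obstacle I anticipate is justifying the tensor-product decomposition, i.e. verifying that coordinates can genuinely be chosen so that the dual forms split into a group supported on $x_0,\ldots,x_t$ in linearly general position and a complementary group of coordinate forms. This requires arguing that the minimality of $t$ forces the $\PP^t$ spanned by the first $t+2$ points to meet the remaining points in a controlled way — in particular that there is no further hidden dependence that would obstruct completing the coordinate system — and that any $t+1$ of the first $t+2$ forms really are in linearly general position inside $R'$. A secondary technical point is the additivity $\reg(A\otimes_K B) = \reg A + \reg B$ for Artinian standard graded $K$-algebras; this is standard (the Hilbert series multiply, and the top nonzero degree of a product of polynomials with nonnegative coefficients is the sum of the top degrees), but it should be stated cleanly. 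Once these two points are in place, the rest is the floor-arithmetic and the duality translation already rehearsed in this section.
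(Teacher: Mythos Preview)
Your proposal is correct and follows essentially the same route as the paper: choose coordinates so that the $t+2$ dependent points have dual forms $\ell_1,\ldots,\ell_{t+2}\in K[x_0,\ldots,x_t]$ in linearly general position and the remaining duals are $x_{t+1},\ldots,x_n$, split $R/(\ell_1^d,\ldots,\ell_{n+2}^d)$ as a tensor product, apply Proposition~\ref{prop:reg n+2 gen} in the $t$-variable factor, and add regularities; then deduce $\alpha(I_Z^{(k)})$ via Theorem~\ref{thm:duality} exactly as in Proposition~\ref{prop:initial n+2 gen}. The paper phrases this as ``induction on $n-t$'', but the inductive step invokes Proposition~\ref{prop:reg n+2 gen} directly rather than an induction hypothesis, so the argument is the same as yours; your anticipated obstacles (justifying the coordinate splitting from minimality of $t$ and the spanning hypothesis, and additivity of regularity for Artinian tensor factors) are routine and are treated implicitly in the paper as well.
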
 

\begin{proof}
We use induction on $n-t \ge 0$. If $t = n$, then we are done by Propositions~\ref{prop:reg n+2 gen} and \ref{prop:initial n+2 gen}. Let $1 \le t < n$. Then we may assume that the first $t+2$ linearly dependent points span $\PP^t = V(x_{t+1},\ldots,x_n)$, that is, the dual linear forms are $\ell_1,\ldots,\ell_{t+2} \in K[x_0,\ldots,x_t] =S$ and $x_{t+1},\ldots,x_n$. It follows that 
\[
R/(\ell_1^d,\ldots,\ell_{n+2}^d ) \cong S/((\ell_1^d,\ldots,\ell_{t+2}^d)  \otimes_K K[x_{t+1}]/(x_{t+1}^d) \otimes_K \cdots \otimes_K K[x_n]/(x_n^d). 
\]
Applying Proposition~\ref{prop:reg n+2 gen} to the first factor, we conclude that 
\begin{equation*}
\begin{split}
\reg R/(\ell_1^d,\ldots,\ell_{n+2}^d ) & = \reg S/(\ell_1^d,\ldots,\ell_{t+2}^d) + (n-t)(d-1) \\
& =  \left \lfloor \frac{(t+2) (d-1)}{2} \right \rfloor + (n-t)(d-1) \\
& =   \left \lfloor \frac{(2n+2-t) (d-1)}{2} \right \rfloor.
\end{split}
\end{equation*}
Now a computation as in the Proof of Proposition~\ref{prop:initial n+2 gen} gives $\alpha (I_Z^{(k)}) = \left \lceil \frac{(2n+2-t) k}{2n-t} \right \rceil$. 
\end{proof}


\section{Subschemes supported at $n+3$ points}  
\label{sec:n+3 points} 

We now consider a set  $Z \subset \PP^n$ consisting of $n+3$ points or, dually, an ideal generated by powers of  $n+3$ linear forms of $R$. We say that a finite set of linear forms is in linearly general position if the set of points dual to these linear forms is in linearly general position. 

\begin{proposition}
     \label{prop:reg n+3 gen} 
Let $\ell_1\ldots,\ell_{n+3} \in R = K[x_0,\ldots,x_n]$ be $n+3$  linear forms in linearly general position. Then one has for any positive integer $d$ that 
\[
\dim_K [R/(\ell_1^d,\ldots,\ell_{n+3}^d )]_r = 
\begin{cases}
1 & \text{if $n$ is even} \\
\binom{n+\rho}{n} & \text{if $n$ is odd and  $(n+2)$ divides $d-1$ or $d \ge n^2 + n+2$}, 
\end{cases}
\]
where 
\[
r =  
\begin{cases}
\frac{(n+2)(d-1)}{2} & \text{if $n$ is even} \\
\left \lfloor \frac{(n+1) (n+3) (d-1)}{2 (n+2)} \right \rfloor &  \text{if $n$ is odd } 
\end{cases}
\]  
and $\rho = \frac{1}{2} (n+1) (n+3) (d-1) - (n+2) r$ satisfies $0 \le \rho \le n+1$.  
\end{proposition}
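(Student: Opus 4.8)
The plan is to pass, via Theorem~\ref{thm:duality}, from the algebra $A = R/(\ell_1^d,\ldots,\ell_{n+3}^d)$ to a linear system of fat points in general position and then to reduce that system, by repeated Cremona transformations (Corollary~\ref{cor:Cremona}) and B\'ezout-type reductions (Corollary~\ref{cor:Bezout}), to something whose dimension can be read off directly. Concretely, Theorem~\ref{thm:duality} gives
\[
\dim_K [A]_r = \dim_K \mathcal L_n\bigl(r;\,(r+1-d)^{n+3}\bigr),
\]
so the problem becomes: compute $\dim_K \mathcal L_n(r; m^{n+3})$ where $m = r+1-d$ and $r$ is the claimed regularity value. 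First I would record the arithmetic: with $r$ as defined, one has $m(n+3) \approx (n-1)r + \rho$ (even $n$) or an analogous near-equality, so the virtual dimension $\binom{n+r}{n} - (n+3)\binom{m-1+n}{n}$ is small, matching $1$ or $\binom{n+\rho}{n}$. The real content is showing the linear system is nonspecial (i.e.\ has exactly the expected dimension) in this borderline regime.

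The key steps, in order: (1) establish that $r$ is the regularity — the upper bound $[A]_{r+1}=0$ should follow from an argument parallel to Lemma~\ref{lem:n+2 upper reg bound} (duality, then Cremona drives the system to a negative/trivial one), and nonvanishing $[A]_r \neq 0$ will come out of the dimension count in the next steps; (2) apply Corollary~\ref{cor:Cremona} to $\mathcal L_n(r; m^{n+3})$ — this replaces $(r; m^{n+3})$ by $(r+t; (m+t)^{n+1}, m, m)$ with $t = (n-1)r - (n+1)m$, and one checks $m+t \geq 0$ in the stated ranges; (3) iterate Cremona together with Corollary~\ref{cor:Bezout} to peel off hyperplane components, descending to a system of $n+3$ points with small multiplicities in small degree; (4) in the terminal case, identify the system explicitly — for $n$ even it should reduce to a system forced to be one-dimensional (e.g.\ a single hyperplane through prescribed points, or $\mathcal L_n(0;0^{n+3})$ after clearing), and for $n$ odd to $\mathcal L_n(\rho; 0^{n+3})$ or similar, whose dimension is $\binom{n+\rho}{n}$; (5) assemble, and separately verify $0 \le \rho \le n+1$ from the definition of $r$ via $\lfloor \cdot \rfloor$.

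I expect the main obstacle to be controlling the iteration in step (3)–(4): the reductions via Corollaries~\ref{cor:Cremona} and~\ref{cor:Bezout} each carry hypotheses ($b_i + t \ge 0$, or $b_1+\cdots+b_n > (n-1)j$) that must hold at \emph{every} stage of the descent, and verifying this requires tracking how the multiplicities and degree evolve — this is where the hypotheses "$(n+2) \mid d-1$ or $d \ge n^2+n+2$" in the odd case must be used, presumably to guarantee the intermediate multiplicities stay non-negative and the process terminates at the right spot. The parity of $n$ enters because $(n-1)$ is the Cremona coefficient: when $n$ is odd, $(n+1)(n+3)/2$ is not divisible by $n+2$ in general, so the floor in the definition of $r$ genuinely contributes a nonzero $\rho$, whereas for $n$ even the symmetric value $\frac{(n+2)(d-1)}{2}$ lands exactly and $\rho$ plays no role, leaving dimension $1$. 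A secondary subtlety is that the odd case may require a residual/exact-sequence argument (restricting to a hyperplane and inducting on $n$, or using a Castelnuovo-type sequence) rather than pure Cremona reduction, in order to pin down the constant $\binom{n+\rho}{n}$ rather than merely bound it.
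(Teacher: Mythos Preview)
Your plan is essentially the paper's approach: pass via duality to a fat-point linear system $\mathcal L_n(r;(r+1-d)^{n+3})$, then reduce by iterated Cremona (Lemma~\ref{lem:Cremona}) and B\'ezout (Lemma~\ref{lem:Bezout}) moves, with the hypothesis ``$(n+2)\mid d-1$ or $d\ge n^2+n+2$'' entering exactly where you predict --- it is used once, to guarantee an intermediate quantity $b>0$ so that the B\'ezout step is legal. Two points where the paper's execution differs from your sketch: for even $n$, after one Cremona two of the multiplicities become equal to the degree, which forces the system into the linear span of the remaining points and drops the ambient dimension from $\mathcal L_n$ to $\mathcal L_{n-2}$; iterating this lands in $\mathcal L_0(d-1;0^3)=1$, not $\mathcal L_n(0;0^{n+3})$. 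For odd $n$, no residual or Castelnuovo-type argument is needed --- after $m-1$ Cremonas, a block of B\'ezout reductions, and one further Cremona, the paper applies duality \emph{back} to reach a monomial complete intersection $A$, and $\binom{n+\rho}{n}$ is simply $\dim_K[A]_\rho=\dim_K[R]_\rho$ because $\rho$ sits strictly below the least generator degree of $A$.
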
 

\begin{proof} 
If $d=1$, then $r = 0$ and the claim is clear. Thus, we may assume $d \ge 2$. 
Put 
\[
D = \dim_K [R/(\ell_1^d,\ldots,\ell_{n+3}^d )]_{r}.
\]
Theorem~\ref{thm:duality} says that $D$ is the dimension of a linear system of fat points whose support is in linearly general position and that the dimension is independent of the specific choice of the points. 

Our strategy for the proof is to modify  this linear system by using Lemmas \ref{lem:Cremona} and \ref{lem:Bezout} repeatedly. Note that this is possible since the Cremona transformation of $n+3$ points in linearly general position gives another set of $n+3$ points with this property. 
The arguments will show that the linear 
systems obtained in the process have dimensions independent of the choice of the support. Thus, we may treat the linear systems as those of general points and will denote them by $\cL_n (j; b_1,\ldots,b_{n+3})$ throughout the remainder of the proof.  In particular, we claim 
\[
D = \dim_K \cL_n (r; (r+1 -d)^{n+3}). 
\]

We consider two cases according to the parity of $n$. 
\smallskip  

First assume $n$ is odd and write $n = 2 m -1$. 
Thus, $m \ge 2$ and we get 
\begin{equation}
   \label{eq:r}
r = \left \lfloor \frac{2 m (m+1) (d-1)}{2 m +1} \right \rfloor = m (d-1) + \left \lfloor \frac{ m  (d-1)}{2 m +1} \right \rfloor . 
\end{equation}
By definition, $\rho$ is the remainder of $\frac{1}{2} (n+1) (n+3) (d-1)$ on division by $(n+2)$, which gives $0 \le \rho \le n+1$.

We claim that  Lemma~\ref{lem:Cremona} is applicable $m-1$ times and gives 
\begin{equation} 
\label{eq:first round of Cre}
\begin{split}
D & = \dim_K \cL_{2m-1} (r; (r+1 -d)^{2m+2}) \\
& = \dim_K \cL_{2m-1} (r+t; (r+1 -d)^{2}, (r+1 -d+t)^{2m}) \\
& \hspace*{.5cm} \vdots \\
& = \dim_K \cL_{2m-1} (r + (m-1)t; [r+1 -d + (m-2)t]^{2m-2}, [r+1-d + (m-1)t]^4), 
\end{split}
\end{equation}
where $t = -2 r + 2m (d-1)$. 

In order to check this, grant at first that it is indeed possible to perform $(m-1)$ Cremona transformations. Then we get after $j$ steps 
\[
D =  \dim_K \cL_{2m-1 } [r + j t; (r+1 -d + (j-1)t]^{2j}, [r+1-d + j t]^{2m+2-2j}). 
\]
Hence another Cremona transformation changes the multiplicities by 
\begin{equation*}
\begin{split}
t' & = (2m-2) [r+jt] - 2 j [r+1 -d + (j-1) t] - (2m-2j) [r+1-d+jt] \\
& =  -2 r + 2m (d-1) \\
& = t. 
\end{split}
\end{equation*}
Thus, in each step the multiplicities change by $t$, as claimed. Using Equation~\eqref{eq:r},  it is straightforward to check that 
\[
-(d-1) \le t \le 0. 
\]
It follows that Lemma~\ref{lem:Cremona} can be applied $m-1$ times if 
\begin{equation*}
\begin{split}
0 & \le r+1-d + (m-1)t \\
& = r+1 d + (m-1) [-2r + 2m (d-1)] \\
& = - (2m -3) r + (2m^2 - 2m -1) (d-1). 
\end{split}
\end{equation*}
In fact, it is elementary to check that 
\[
\frac{2 m (m+1) (d-1)}{2 m +1} < \frac{(2 m^2 - 2m - 1) (d-1)}{2m -3}. 
\]
Since the left-hand side is an upper bound for $r$, we conclude that 
\[
c= r+1-d + (m-1)t \ge 1. 
\]
This completes the justification of Equation~\eqref{eq:first round of Cre}. 
Thus,  we have 
\[
D  = \dim_K \cL_{2m-1} (c+d-1; [c -t]^{2m-2}, c^4). 
\]

Now we want to apply Lemma~\ref{lem:Bezout} four times in order to obtain 
\begin{equation} 
\label{eq:first round of Bezout}
\begin{split}
D & = \dim_K \cL_{2m-1} (c+d-1; [c -t]^{2m-2}, c^4) \\
& = \dim_K \cL_{2m-1} (c+d-2; [c -t-1]^{2m-2}, c^3, c-1) \\
& = \dim_K \cL_{2m-1} (c+d-3; [c -t-2]^{2m-2}, c^2, [c-1]^2) \\
& = \dim_K \cL_{2m-1} (c+d-4; [c -t-3]^{2m-2}, c, [c-1]^3) \\
& = \dim_K \cL_{2m-1} (c+d-5; [c -t-4]^{2m-2}, [c-1]^4). 
\end{split}
\end{equation}
As $ c > 0$, this is indeed possible if 
\begin{equation*} 
\label{eq:cond for Bezout}
\begin{split}
0  < b & = (2m-2) [c-t] + c - (2m-2)[c+d-1]  \\
& =  c - (2m-2) )d-1+t) \\
& = r + (2m-1)(d-1) - (m-1)t \\
& = (2m-1) r - (2 m^2 -1) (d-1). 
\end{split}
\end{equation*}
One checks that 
\[
\frac{2 m (m+1) (d-1) - 2m }{2 m +1} > \frac{(2 m^2 - 1) (d-1)}{2m -1} 
\]
if $d \ge 4 m^2 - 2m +2$  or if $(2m+1)$ divides $d-1$. (This is the only place in the proof where we use the assumption on $d$.)
Since the left-hand side is a lower bound for $r$, we get $b > 0$ as desired. Hence applying Lemma~\ref{lem:Bezout} a total of $4 b$ times to the last line of Equation~\eqref{eq:first round of Cre}, we get (see Equation~\eqref{eq:first round of Bezout})
\begin{equation*}
D = \dim_K \cL_{2m-1} (c+d-1 - 4b; [c -t - 4b ]^{2m-2}, [c-b]^4). 
\end{equation*}
This is indeed valid as straightforward computations show that the three integers $c+d-1 - 4b, \ c -t - 4b$, and $c - b$ are all non-negative. 

In the next step we want to apply again Lemma~\ref{lem:Cremona}, this time with 
\[
\tilde{t} = (2m-2) [c + d-1-4b] - (2m-2) [c-t-4b] - 2 [c-b]. 
\]
Comparing with $c - b$, we get $\tilde{t} = -c + b$.
Hence, we obtain 
\begin{equation}
   \label{eq:after last Cremona}
D = \dim_K \cL_{2m-1} (d-1 - 3b; [-t - 3b ]^{2m-2}, [c-b]^2). 
\end{equation}
This is valid if the integers $d-1 - 3b, \ -t - 3b$, and $c - b$ are non-negative. We know that $c - b \ge 0$ and $d-1 \ge -t$. Thus, it suffices to check that $-t \ge 3b$.  This is easily confirmed and concludes the justification of Equation~\eqref{eq:after last Cremona}. Since the support of the linear system is in  linearly general position, Theorem~\ref{thm:duality} yields 
\[
D = \dim_K [R/(x_0^{d+t},\ldots,x_{2m-3}^{d+t}, x_{2m-2}^{d-c-2b}, x_{2m-1}^{d-c-2b})]_{d-1-3b}. 
\]
Put $A = R/(x_0^{d+t},\ldots,x_{2m-3}^{d+t}, x_{2m-2}^{d-c-2b}, x_{2m-1}^{d-c-2b})$. Thus, 
\begin{equation*}
\begin{split}
\reg A & = (2m-2) (d+t-1) + 2 (d-c-2b-1) \\
& = (2m-2) [-2r + (2m+1)(d-1)] - 2 (2m+1) r + (2m^2 + 2m)(d-1) \\
& = -(8m-2) r + (8 m^2 + 2m -2) (d-1). 
\end{split}
\end{equation*}
Using that $d-1-3b = -(6m-3) r + (6m^2 -2) (d-1)$, we get 
\[
\reg A - (d-1-3b) = - (2m+1) r + 2m (m+1) (d-1) = \rho \ge 0. 
\]
Since $A$ is a complete intersection,  duality gives 
\[
D = \dim_K [A]_{d-1-3b} = \dim_K [A]_{\rho}. 
\]
The relations of $A$ have degrees $d+t$ and $d-c-2b$. Now one checks that $d+t = \rho + b$ and $d-c-2b = \rho + 1$. We have seen that $b > 0$ if $d \ge 2 (m+1)$ or if $(2m+1)$ divides $d-1$. 
Thus, our assumptions guarantee 
\[
D =  \dim_K [A]_{\rho} = \dim_K [R]_{\rho} = \binom{2m-1+\rho}{2m-1}, 
\]
as claimed in the case where $n$ is odd. 
\smallskip 

Second, assume $n$ is even. We use the same methods as above. However, this case is far easier. 
In fact, applying Lemma~\ref{lem:Cremona}  to $\cL_n (r; (r+1 -d)^{n+3})$ with $r = \frac{(n+2) (d-1)}{2}$, we get
\begin{equation*}
\begin{split}
D & = \dim \cL_n ({\textstyle \frac{n+2}{2} }(d-1); [{\textstyle \frac{n}{2} }(d-1)]^{n+3}) \\
& = \dim \cL_n ({\textstyle \frac{n}{2}} (d-1); [{\textstyle \frac{n}{2}} (d-1)]^{2}, [{\textstyle\frac{n-2}{2} }(d-1)]^{n+1}) \\
& = \dim_K \cL_{n-2} ({\textstyle \frac{n}{2}} (d-1); [{\textstyle \frac{n-2}{2}} (d-1)]^{n+1}). 
\end{split}
\end{equation*}
Repeating this step a total of $\frac{n}{2}$ times, we obtain 
\[
D = \dim_K \cL_0 (d-1; 0^3) = 1, 
\]
as desired.
\end{proof}

\begin{remark}
  \label{rem:comp st-xi}
In the case, where $n$ is even and $d=2$, the above proposition specializes to \cite[Corollary 7.3]{SX}. 
\end{remark}

\begin{corollary}
    \label{cor:initial deg estimate n+3} 
Let $Z \subset \PP^n$ be a subset of $n+3$ points in linearly general position. Then one has for every integer $k > 0$
\[
\alpha (I_Z^{(k)}) \le \begin{cases}
\left \lceil \frac{(n+2) k}{n} \right \rceil & \text{ if $n$ is even} \\[1em]
\left \lceil \frac{(n+1) (n+3) k}{n^2 + 2 n -1} \right \rceil & \text{ if $n$ is odd and $\frac{1}{2} (n^2 + 2n -1)$ divides $k$ or $k \ge \frac{(n^2 + n + 1) (n^2 + 2n -1)}{2 (n+2)}$.} 
\end{cases} 
\]
\end{corollary}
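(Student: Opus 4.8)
The plan is to mimic the proof of Proposition~\ref{prop:initial n+2 gen}, deducing everything from Proposition~\ref{prop:reg n+3 gen} via the duality of Theorem~\ref{thm:duality}. Let $\ell_1,\ldots,\ell_{n+3}$ be linear forms dual to the points of $Z$, and for $d\ge 1$ put $A_d=R/(\ell_1^d,\ldots,\ell_{n+3}^d)$. Fix $k>0$, let $N$ denote the claimed upper bound for $\alpha(I_Z^{(k)})$, and set $d=N-k+1$; since $N>k$ (the displayed fractions exceed $1$ times $k$), we have $d\ge 2$. As $N\ge d-1$ and $d\le N$, Theorem~\ref{thm:duality} applied with all exponents equal to $d$ gives $\dim_K[I_Z^{(k)}]_N=\dim_K[A_d]_N$. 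So it suffices to prove $[A_d]_N\ne 0$.

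Two ingredients go into this. The first is arithmetic: writing $r(d)$ for the degree occurring in Proposition~\ref{prop:reg n+3 gen}, one checks by a direct computation (using that $N$ is an integer, so $N\le\lfloor x\rfloor\Leftrightarrow N\le x$) that $N\le r(d)$ is equivalent to $N\ge\frac{(n+2)k}{n}$ when $n$ is even and to $N\ge\frac{(n+1)(n+3)k}{n^2+2n-1}$ when $n$ is odd, which both hold by the definition of $N$ as a ceiling; and $d\le N$ always since $k\ge 1$. In the odd case one must moreover translate the hypothesis on $k$ into the hypothesis on $d$ needed by Proposition~\ref{prop:reg n+3 gen}: if $\tfrac12(n^2+2n-1)\mid k$ then $N=\frac{(n+1)(n+3)k}{n^2+2n-1}\in\Z$ and $d-1=N-k$ turns out to be a multiple of $n+2$, while if $k\ge\frac{(n^2+n+1)(n^2+2n-1)}{2(n+2)}$ then $d=N-k+1\ge\frac{2(n+2)k}{n^2+2n-1}+1\ge n^2+n+2$. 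In either case (and always, when $n$ is even) Proposition~\ref{prop:reg n+3 gen} applies and yields $[A_d]_{r(d)}\ne 0$.

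The second, and only conceptual, ingredient is to propagate non-vanishing downward from degree $r(d)$ to degree $N$. For this I would observe that the ideal $I=(\ell_1^d,\ldots,\ell_{n+3}^d)$ is generated in the single degree $d$, so if $[A_d]_{j_0}=0$ for some $j_0$ then necessarily $j_0\ge d$ and $[I]_{j_0}=[R]_{j_0}$, whence $[I]_j\supseteq [R]_{j-j_0}\cdot[I]_{j_0}=[R]_j$ for every $j\ge j_0$; thus $\{\,j:[A_d]_j\ne 0\,\}$ is an interval of the form $[0,b]$. Since this interval contains $r(d)$ and $d\le N\le r(d)$, it contains $N$, so $[A_d]_N\ne 0$ and therefore $\alpha(I_Z^{(k)})\le N$, proving the corollary.

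There is no real obstacle here: the content is the single‑degree observation of the last paragraph, which is short, together with Proposition~\ref{prop:reg n+3 gen}. The only place that takes care is the bookkeeping in the second paragraph — verifying $d\le N\le r(d)$ through the floors and ceilings in the definitions of $N$ and $r(d)$, and matching the two equivalent phrasings of the hypothesis in the odd case — and I expect that to be entirely routine.
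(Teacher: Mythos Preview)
Your proposal is correct and follows essentially the same approach as the paper's own proof: combine Proposition~\ref{prop:reg n+3 gen} with Theorem~\ref{thm:duality}, and translate between the parameters $(d,j)$ and $k$. The one place where you are more careful than the paper is the ``interval'' observation in your final paragraph---that $\{j:[A_d]_j\neq 0\}$ is of the form $[0,b]$ because $A_d$ is standard graded---which the paper uses implicitly when it asserts $[A_d]_j\neq 0$ for all $j\le r(d)$ from Proposition~\ref{prop:reg n+3 gen} alone; your arithmetic verifications (that $N\le r(d)$, and that the hypothesis on $k$ matches the hypothesis on $d$) are correct and mirror the paper's computations.
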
 

\begin{proof}
As above, we combine Proposition~\ref{prop:reg n+3 gen} and Theorem~\ref{thm:duality}. The computation is straightforward if $n$ is even. We provide some details in the more complicated case where $n$ is odd. Then Proposition~\ref{prop:reg n+3 gen} yields $0 \neq [I_Z^{(j+1-d)}]_j$  if 
\[
j \le \left \lfloor \frac{(n+1) (n+3) (d-1)}{2 (n+2)} \right \rfloor \; \text{ and \ $n+2$ divides $d-1$ or }  d \ge n^2 + n +2.  
\]
Setting $k = j+1-d$, the condition on $j$ is equivalent to $j \ge \left \lceil \frac{(n+1) (n+3) k}{n^2 + 2 n -1} \right \rceil$, 
and with this assumption  $d \ge n^2 + n +2$ is equivalent to $k \ge \frac{(n^2 + n + 1) (n^2 + 2n -1)}{2 (n+2)}$. 
Finally, if  $d-1 = t (n+2)$ for some integer $t$, then we get $0 \neq [I_Z^{(k)}]_j$ for $k = \frac{1}{2}(n^2 + 2n -1) t$ 
and $j = \frac{1}{2} (n+1)(n+3) t$. This implies the claim if $k$ is divisible by $\frac{1}{2} (n^2 + 2n -1)$. 
\end{proof}

The main result of this section refines the above statements. 

\begin{theorem}
    \label{thm:n+3 points}
Let $Z \subset \PP^n$ be a subset of $n+3$ in linearly general position. Denote by $\ell_1,\ldots,\ell_{n+3}$ linear forms that are dual to the given points.  Then one has 
\[
\reg R/(\ell_1^d,\ldots,\ell_{n+3}^d ) = \begin{cases}
\frac{(n+2)(d-1)}{2} & \text{if $n$ is even and $d > 0$} \\
\left \lfloor \frac{(n+1) (n+3) (d-1)}{2 (n+2)} \right \rfloor &  \text{if $n$ is odd and } \\
&   \text{ $(n+2)$ divides $d-1$ or $d \ge n^2 + n+2$} 
\end{cases}
\]
and 
\[
\alpha (I_Z^{(k)}) = \begin{cases}
\left \lceil \frac{(n+2) k}{n} \right \rceil & \text{ if $n$ is even and $k > 0$} \\[1em]
\left \lceil \frac{(n+1) (n+3) k}{n^2 + 2 n -1} \right \rceil & \text{ if $n$ is odd and $\frac{1}{2} (n^2 + 2n -1)$ divides $k$ or $k \ge \frac{(n^2 + n + 1) (n^2 + 2n -1)}{2 (n+2)}$.} 
\end{cases} 
\]
Moreover, if $n$ is odd,  then $\alpha (I_Z^{(k)}) \ge \left \lceil \frac{(n+1) (n+3) k}{n^2 + 2 n -1} \right \rceil $ for every $k > 0$ and $\reg R/(\ell_1^d,\ldots,\ell_{n+2}^d ) \le \left \lfloor \frac{(n+1) (n+3) (d-1)}{2 (n+2)} \right \rfloor$ for every $d > 0$. 
\end{theorem}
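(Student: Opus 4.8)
The plan is to reduce everything to the regularity statements and then to read off the formulas for $\alpha(I_Z^{(k)})$ from them exactly as in the proof of Proposition~\ref{prop:initial n+2 gen}. By Theorem~\ref{thm:duality} one has $[R/(\ell_1^d,\dots,\ell_{n+3}^d)]_j \cong [I_Z^{(j+1-d)}]_j$ whenever $j \ge d-1$, and since the Artinian algebra $R/(\ell_1^d,\dots,\ell_{n+3}^d)$ is standard graded, the set of degrees in which it is nonzero is an interval $[0,\reg]$. Hence, once $\reg$ is known and Proposition~\ref{prop:reg n+3 gen} supplies nonvanishing at the degree $r$ occurring there, one knows precisely for which pairs $(j,k)$ with $j-k=d-1$ the module $[I_Z^{(k)}]_j$ is nonzero; the value of $\alpha(I_Z^{(k)})$ then drops out of the elementary equivalence $2(n+2)j \le (n+1)(n+3)(d-1) \iff (n^2+2n-1)\,j \ge (n+1)(n+3)\,k$ for $n$ odd, its analogue $2j \le (n+2)(d-1) \iff n\,j \ge (n+2)\,k$ for $n$ even, and the trivial vanishing $[I_Z^{(k)}]_j = 0$ for $j < k$.

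For $n$ even both halves are then quick. Proposition~\ref{prop:reg n+3 gen} gives $[R/(\ell_1^d,\dots,\ell_{n+3}^d)]_r \ne 0$ with $r = \tfrac{(n+2)(d-1)}{2}$, so $\reg \ge r$. Conversely $R/(\ell_1^d,\dots,\ell_{n+3}^d)$ is a quotient of $R/(\ell_1^d,\dots,\ell_{n+2}^d)$, so its Hilbert function is bounded above degreewise, and since $n+2$ is even Proposition~\ref{prop:reg n+2 gen} gives $\reg R/(\ell_1^d,\dots,\ell_{n+2}^d) = \bigl\lfloor\tfrac{(n+2)(d-1)}{2}\bigr\rfloor = \tfrac{(n+2)(d-1)}{2} = r$ (the floor being exact because $n+2$ is even); hence $[R/(\ell_1^d,\dots,\ell_{n+3}^d)]_j = 0$ for $j > r$ and $\reg = r$. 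The $\alpha$-formula follows as above.

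The real content is the odd case, and its heart is the \emph{unconditional} estimate $\reg R/(\ell_1^d,\dots,\ell_{n+3}^d) \le r_d := \bigl\lfloor \tfrac{(n+1)(n+3)(d-1)}{2(n+2)} \bigr\rfloor$ for every $d>0$ --- this is the ``Moreover'' clause. Granting it: the exact regularity under the hypothesis ``$(n+2)\mid d-1$ or $d\ge n^2+n+2$'' follows by combining it with the nonvanishing at $r_d$ from Proposition~\ref{prop:reg n+3 gen}; the bound $\alpha(I_Z^{(k)}) \ge \bigl\lceil \tfrac{(n+1)(n+3)k}{n^2+2n-1} \bigr\rceil$ for all $k>0$ follows from it by the duality translation of the first paragraph; and combined with the upper bound in Corollary~\ref{cor:initial deg estimate n+3} this yields the stated value of $\alpha(I_Z^{(k)})$ under the hypothesis on $k$. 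To prove the estimate I would rerun the Cremona--Bezout reduction from the proof of Proposition~\ref{prop:reg n+3 gen}, but applied to the linear system $\cL_n(r_d+1;(r_d+2-d)^{n+3})$ --- which by Theorem~\ref{thm:duality} equals $\dim_K[R/(\ell_1^d,\dots,\ell_{n+3}^d)]_{r_d+1}$ --- rather than to $\cL_n(r_d;(r_d+1-d)^{n+3})$. Replacing $r_d$ by $r_d+1$ shifts the auxiliary integers $t,c,b$ of that proof each by a bounded amount, and the key point is that the inequality ``$b>0$'', which in the proof of Proposition~\ref{prop:reg n+3 gen} was the only place the hypothesis on $d$ was used, now holds automatically, because the replacement adds a positive amount that outweighs the bounded deficit responsible for that hypothesis. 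The reduction then terminates, as before, in $\dim_K[A']_{\rho'}$ for an Artinian complete intersection $A'$, but now with $\rho' = \tfrac{1}{2}(n+1)(n+3)(d-1) - (n+2)(r_d+1) = \rho - (n+2) \le -1 < 0$; so the value is $0$, and since an Artinian standard graded algebra that vanishes in one degree vanishes in all higher degrees, this gives $\reg \le r_d$.

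The main obstacle is the bookkeeping in this shifted reduction: one must verify that each application of Lemma~\ref{lem:Cremona} and Lemma~\ref{lem:Bezout} remains admissible after $r_d \mapsto r_d+1$, that is, that all intermediate multiplicities and ambient degrees stay nonnegative. Most of the relevant inequalities were strict in Proposition~\ref{prop:reg n+3 gen} and survive, and a short estimate shows they survive for every $d$ that is not too small relative to $n$ (roughly, past the point where the quantity ``$c$'' of that proof is at least about $n$). The remaining finitely many small values of $d$ --- for which the number of admissible Cremona transformations may drop --- would have to be handled separately, either by reordering the reduction or by observing that for them $r_d+1$ is already large enough compared with $d$ that $\cL_n(r_d+1;(r_d+2-d)^{n+3})$ is empty for an elementary reason (for instance via a single application of Lemma~\ref{lem:Bezout}, or a direct count). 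Confirming that the shifted reduction lands exactly in degree $\rho - (n+2)$, and that these edge cases are genuinely harmless, is where the actual work lies.
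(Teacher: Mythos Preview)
Your treatment of the even case is correct and, if anything, more transparent than the paper's: bounding the regularity from above by passing to the subquotient generated by only $n+2$ of the powers and invoking Proposition~\ref{prop:reg n+2 gen} is clean and immediate.

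For the odd case, however, the paper takes a much shorter route that sidesteps the shifted Cremona--Bezout reduction entirely. The key observation is that when $(n+2)\mid d-1$ one has $\rho=0$ in Proposition~\ref{prop:reg n+3 gen}, so the dimension at degree $r$ is exactly~$1$; the elementary fact that $\dim_K[I]_j=1$ forces $\alpha(I)=j$ (in a polynomial ring of at least two variables) then gives the \emph{exact} value $\alpha(I_Z^{(tp)})=\tfrac{1}{2}(n+1)(n+3)\,t$ for $p=\tfrac{1}{2}(n^2+2n-1)$ and every $t>0$. Now the containment $(I_Z^{(t)})^{p}\subset I_Z^{(tp)}$ gives $p\cdot\alpha(I_Z^{(t)})\ge\alpha(I_Z^{(tp)})$, hence $\alpha(I_Z^{(t)})\ge\frac{(n+1)(n+3)t}{n^2+2n-1}$ for \emph{all} $t>0$. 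The unconditional regularity upper bound in the ``Moreover'' clause then follows from this $\alpha$ lower bound via the same duality translation you wrote down in your first paragraph --- so the paper derives the regularity bound from the $\alpha$ bound, not the other way around.

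Your proposed direct attack on $\cL_n(r_d+1;(r_d+2-d)^{n+3})$ is plausible but genuinely harder. You are right that $b'>0$ becomes automatic (indeed $b'=b+(2m-1)\ge\frac{d-1}{2m+1}$), but the earlier admissibility condition does not: one computes $c'=c-(2m-3)$, and since Proposition~\ref{prop:reg n+3 gen} only guarantees $c\ge1$, the shifted $c'$ can be negative for small $d$ (e.g.\ $n=5$, $d=2$ gives $c=2$, $c'=-1$). So the edge cases you flag are real and not merely a matter of reordering; the sub-multiplicativity argument avoids them in one stroke.
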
 

\begin{proof}
If $I$ is any homogeneous ideal in a polynomial ring of more than one variable, then $ 1 =\dim_K [I]_j $ for some $j$ implies $\alpha (I) = j$. Thus, Proposition~\ref{prop:reg n+3 gen} and 
Theorem~\ref{thm:duality} yield the claim if $n$ is even and if $n$ is odd and 
$\frac{1}{2} (n^2 + 2n -1)$ divides $k$. Indeed, in the latter case we saw in the proof of 
Corollary~\ref{cor:initial deg estimate n+3} that the stated divisibility condition is equivalent to the fact 
that $n+2$ divides $d-1$, which implies $\rho = 0$ in Proposition~\ref{prop:reg n+3 gen}. 

It remains to consider the other powers $k$ if $n$ is odd. Set $p = \frac{1}{2} (n^2 + 2n -1)$. We just showed that, for every integer $t > 0$, 
\[
\alpha (I_Z^{(t p)}) = {\textstyle \frac{1}{2}} (n+1)(n+3) t. 
\]
Hence, $I_Z^{(t p)} \supset (I_Z^{(t)})^p$ implies ${\textstyle \frac{1}{2}} (n+1)(n+3) t \le p \cdot  \alpha (I_Z^{(t)})$, which gives 
\[
\frac{(n+1)(n+3) t}{n^2 + 2n -1}  = \frac{(n+1)(n+3) t}{2p} \le \alpha (I_Z^{(t)}). 
\]
Together with Corollary~\ref{cor:initial deg estimate n+3}, the remaining claims for the initial degree 
now follow. A computation using Theorem~\ref{thm:duality} yields the claims for the regularity. 
\end{proof}

We suspect that the equalities in Theorem~\ref{thm:n+3 points} are always true, that is, also for small  $d$ and $k$ if $n$ is odd.  

\begin{remark}
  \label{rem:Verlinde}
In \cite[Theorem 7.2]{SX} Sturmfels and Xi established the following interpretation of the celebrated Verlinde formula  that is at the interface of algebraic geometry and mathematical physics. If $\ell_1,\ldots,\ell$ are $n+3$ linear forms in $R = K[x_0,\ldots,x_n]$ in linearly general position, then 
\[
\dim_K [R/(\ell_1^{2j+1},\ldots,\ell_{n+3}^{2 j + 1})]_{(n+1) j} = 
\frac{1}{2j+1} \sum_{k=0}^{2j} (-1)^{(n+1)k} \left ( \sin {\textstyle \frac{2k+1}{4j+2}} \pi \right )^{-n-1} 
\]
whenever $j \in \begin{cases}
\Z & \text{ if $n$ is even}\\
\frac{1}{2} \Z & \text{ if $n$ is odd}.
\end{cases}$ 

Comparing with the regularity of the algebra $A = R/(\ell_1^{2j+1},\ldots,\ell_{n+3}^{2 j + 1})$ on the 
left-hand side (see Theorem~\ref{thm:n+3 points}), it follows that the Verlinde formula determines 
the Hilbert function of $A$ in one degree, and this degree is less than the degree for which we 
determined the Hilbert function of $A$ in Proposition~\ref{prop:reg n+3 gen}. 
\end{remark}


\section{Conjectures by Chudnovsky and Demailly} 
     \label{sec:containment} 

There has been a lot of interest in the
``ideal containment problem'' of determining all positive integer pairs $(k ,m)$ such that
$I^{(k)} \subseteq I^m$ (see, e.g., \cite{BH, HaHu}).  In this section we consider several invariants that were introduced to study this problem. 

The \emph{Waldschmidt constant} (see \cite{Wa}) of a nonzero homogeneous ideal $I$ is the number 
\[
\widehat\alpha(I) = \lim_{k \rightarrow \infty} \frac{\alpha(I^{(k)})}{k}. 
\] 
This limit always exists and satisfies $\frac{\alpha(I^{(k)})}{k} \ge \widehat\alpha(I)$ for every $k \ge 1$. There are rather few cases for which  the Waldschmidt is known (see, e.g., \cite{BH, MFO, DHST, GHMN, GHVT}). Thus, one seeks good lower bounds. Chudnovsky and Demailly put forward conjectural estimates for an ideal of a finite set of points $Z \subset \PP^n$. We will establish new instances of these conjectures. Note that it is harmless to assume that $Z$ spans $\PP^n$. Otherwise $Z$ is contained in a hyperplane and $\widehat\alpha(I_Z) = 1$ because $\alpha (I_Z^{(k)} )= k$ for every $k \ge 1$. 

We begin by determining the Waldschmidt constant. 

\begin{proposition}
    \label{prop:Waldschmidt}
Let $Z \subset \PP^n$ be a set of points spanning $\PP^n$. Then one has: 
\begin{itemize}

\item[(a)] If $|Z|= n+2$ and $t$ is the least integer such that a subset of $t+2$ points of $Z$ is linearly dependent, then $\widehat\alpha(I_Z) = \left  \lceil \frac{2n+2-t}{2n-t} \right \rceil$. 

\item[(b)] If $|Z| = n+3$ and $Z$ is in linearly general position, then 
\[
\widehat\alpha(I_Z) = \begin{cases}
\left \lceil \frac{n+2}{n} \right \rceil & \text{ if $n$ is even} \\[1ex]
 \left  \lceil \frac{(n+1) (n+3)}{n^2 + 2n -1} \right \rceil & \text{ if $n$ is odd.} 
\end{cases}
\]
\end{itemize}
\end{proposition}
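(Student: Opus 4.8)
The plan is to read off all three Waldschmidt constants from the explicit formulas for $\alpha(I_Z^{(k)})$ in Theorems~\ref{thm:n+2 points} and~\ref{thm:n+3 points}, combined with the two basic facts recalled above: the limit $\widehat\alpha(I_Z) = \lim_{k\to\infty}\alpha(I_Z^{(k)})/k$ exists, and $\alpha(I_Z^{(k)})/k \ge \widehat\alpha(I_Z)$ for every $k \ge 1$. Throughout, write $c$ for the value claimed for $\widehat\alpha(I_Z)$, namely $c = \frac{2n+2-t}{2n-t}$ in case (a), $c = \frac{n+2}{n}$ in case (b) with $n$ even, and $c = \frac{(n+1)(n+3)}{n^2+2n-1}$ in case (b) with $n$ odd.

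First I would prove $\widehat\alpha(I_Z) \ge c$. In case (a), Theorem~\ref{thm:n+2 points} gives $\alpha(I_Z^{(k)}) = \lceil (2n+2-t)k/(2n-t)\rceil \ge ck$ for all $k > 0$; the identical reasoning works in case (b) for $n$ even using the exact formula in Theorem~\ref{thm:n+3 points}. For $n$ odd the exact value of $\alpha(I_Z^{(k)})$ is not available for every $k$, so instead I would use the final (``Moreover'') clause of Theorem~\ref{thm:n+3 points}, which gives $\alpha(I_Z^{(k)}) \ge \lceil (n+1)(n+3)k/(n^2+2n-1)\rceil \ge ck$ for every $k > 0$ with no divisibility restriction. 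In all cases $\alpha(I_Z^{(k)})/k \ge c$ for every $k$, and passing to the limit yields $\widehat\alpha(I_Z) \ge c$.

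For the matching upper bound I would exhibit, in each case, a single power $k_0 > 0$ with $\alpha(I_Z^{(k_0)}) = c k_0$; then $\widehat\alpha(I_Z) \le \alpha(I_Z^{(k_0)})/k_0 = c$ by the inequality quoted above, which finishes the proof. In case (a) take $k_0 = 2n-t$, so that $c k_0 = 2n+2-t \in \Z$ and the ceiling in Theorem~\ref{thm:n+2 points} disappears. In case (b) with $n$ even take $k_0 = n$, so that $c k_0 = n+2 \in \Z$. In case (b) with $n$ odd, observe that $p := \tfrac{1}{2}(n^2+2n-1)$ is an integer and take $k_0 = p$; since $\tfrac{1}{2}(n^2+2n-1)$ divides $k_0$, Theorem~\ref{thm:n+3 points} applies and gives $\alpha(I_Z^{(p)}) = \lceil (n+1)(n+3)p/(n^2+2n-1)\rceil = \tfrac{1}{2}(n+1)(n+3) = cp$, the quantity $\tfrac{1}{2}(n+1)(n+3)$ being an integer because $n+1$ is even. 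Combining with the lower bound gives $\widehat\alpha(I_Z) = c$ in every case.

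I do not expect a genuine obstacle here once Theorems~\ref{thm:n+2 points} and~\ref{thm:n+3 points} are in hand; the argument is essentially bookkeeping. The one point demanding attention is the $n$ odd subcase of (b): there the lower bound must come from the one-sided ``Moreover'' estimate, which holds for all $k$, while the upper bound must be extracted at a single carefully chosen $k_0$ lying in the arithmetic progression on which Theorem~\ref{thm:n+3 points} supplies the exact value of $\alpha(I_Z^{(k_0)})$.
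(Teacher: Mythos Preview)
Your argument is correct and is exactly what the paper intends: its own proof is the single line ``This follows by Theorems~\ref{thm:n+2 points} and~\ref{thm:n+3 points},'' and you have simply spelled out the limit computation that this line abbreviates. (You have also, rightly, read the ceiling brackets in the displayed values of $\widehat\alpha(I_Z)$ as typographical slips and computed with the bare fractions, which is what the formulas in Theorems~\ref{thm:n+2 points} and~\ref{thm:n+3 points} actually force.)
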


\begin{proof}
This follows by Theorems~\ref{thm:n+2 points} and \ref{thm:n+3 points}. 
\end{proof}

\begin{remark}
   \label{rem:Waldschmidt} 
(i) For completeness, we note that  $\widehat\alpha(I_Z) = \left  \lceil \frac{n+1}{n} \right \rceil$ if  $|Z| = n+1$ by  Remark~\ref{rem:n+1 points}. 

(ii) If $Z$ is a set of \emph{general}  points with  $|Z| \in \{n+2, n+3\}$, then the above result was shown in \cite[Proposition B.1.1]{DHST}. 
\end{remark}

For an arbitrary finite set of points, Chudnovsky proposed in \cite{Chud} the following estimate. 

\begin{conjecture}[Chudnovsky]
   \label{conj:Chudnovsky}
Any finite set of points $Z \subset \PP^n$ satisfies $\widehat\alpha(I_Z) \ge \frac{\alpha(I_Z) +n -1}{n}$. 
\end{conjecture}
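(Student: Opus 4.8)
\section*{Proof proposal for Conjecture~\ref{conj:Chudnovsky}}

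The plan is to reduce the inequality to a lower bound on the Waldschmidt constant that is produced by a comparison of symbolic and ordinary powers. First I would record that $\widehat\alpha(I_Z)$ is not merely a limit but an infimum: since $I_Z^{(a)} I_Z^{(b)} \subseteq I_Z^{(a+b)}$, taking a minimal-degree form from each factor gives the subadditivity $\alpha(I_Z^{(a+b)}) \le \alpha(I_Z^{(a)}) + \alpha(I_Z^{(b)})$, so Fekete's lemma yields $\widehat\alpha(I_Z) = \inf_{k \ge 1} \frac{\alpha(I_Z^{(k)})}{k}$. Consequently it suffices to exhibit, for arbitrarily large $k$, a bound $\alpha(I_Z^{(k)}) \ge \frac{k}{n}\bigl(\alpha(I_Z) + n - 1\bigr)$; passing to the limit then gives the claim for an arbitrary finite set $Z \subset \PP^n$.

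The engine for such a bound is a containment result. The theorem of Ein--Lazarsfeld--Smith (in characteristic zero) and Hochster--Huneke (in positive characteristic) gives, since the big height of $I_Z$ is $n$, the containment $I_Z^{(nm)} \subseteq I_Z^m$ for every $m \ge 1$. Applying $\alpha$ and using $\alpha(I_Z^m) = m\,\alpha(I_Z)$ yields $\alpha(I_Z^{(nm)}) \ge m\,\alpha(I_Z)$, hence $\widehat\alpha(I_Z) \ge \frac{\alpha(I_Z)}{n}$. This has the correct denominator but falls short of Chudnovsky's numerator by $n-1$. The natural way to recover the missing term is to strengthen the containment so that it carries a power of the irrelevant ideal $\fm = (x_0,\dots,x_n)$, of Harbourne--Huneke type, namely $I_Z^{(nm)} \subseteq \fm^{(n-1)m} I_Z^m$. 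Granting this, $\alpha(I_Z^{(nm)}) \ge (n-1)m + m\,\alpha(I_Z)$, and dividing by $nm$ and letting $m \to \infty$ gives exactly $\widehat\alpha(I_Z) \ge \frac{\alpha(I_Z)+n-1}{n}$.

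The main obstacle is precisely producing this extra factor $\fm^{(n-1)m}$ for an arbitrary point set, and this is where the conjecture is genuinely open. A partial substitute comes from vanishing theorems: the Esnault--Viehweg inequality (characteristic zero) sharpens the bound to $\widehat\alpha(I_Z) \ge \frac{\alpha(I_Z)+1}{n}$, which already settles $n=2$ but leaves a gap of $n-2$ in the numerator for $n \ge 3$. Thus the realistic strategy for general $Z$ is to interpolate between these extremes, establishing the Harbourne--Huneke containment, or an intermediate one $I_Z^{(nm)} \subseteq \fm^{cm} I_Z^m$ with $c$ as close to $n-1$ as possible, presumably via multiplier ideals, Frobenius or tight-closure techniques, or an analysis of the blow-up along $Z$; closing this gap is exactly the open point. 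The contribution of the present paper is orthogonal: rather than proving a new containment, it computes $\widehat\alpha(I_Z)$ exactly for the configurations treated in Theorems~\ref{thm:n+2 points} and~\ref{thm:n+3 points} and then verifies the inequality directly in those cases, thereby confirming the conjecture in new instances without resolving the general obstacle.
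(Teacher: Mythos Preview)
The statement is recorded in the paper as a conjecture, not a theorem, and the paper does not prove it in general; it only confirms the special cases of Corollary~\ref{cor:Chud}. Your proposal recognizes this and does not claim a proof either: you lay out the standard route via the Harbourne--Huneke containment $I_Z^{(nm)}\subseteq\fm^{(n-1)m}I_Z^m$ and correctly name producing the factor $\fm^{(n-1)m}$ for arbitrary $Z$ as the open obstacle.

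One correction to your final paragraph: the paper's treatment of its special cases is not orthogonal to the containment strategy but an instance of it. Proposition~\ref{prop:suff Chud} actually establishes $I_Z^{(kn)}\subset\fm^{(n-1)k}I_Z^k$ for the configurations with $|Z|\le n+3$ under the stated hypotheses, by verifying the numerical criterion $\alpha(I_Z^{(nk)})\ge k\,e^+(I_Z)+k(n-1)$ of \cite[Proposition~2.3]{HaHu}; the explicit formulas for $\alpha(I_Z^{(k)})$ from Theorems~\ref{thm:n+2 points} and~\ref{thm:n+3 points}, together with $e^+(I_Z)=2$, make this checkable. Corollary~\ref{cor:Chud} then deduces Chudnovsky from the containment via \cite[Lemma~3.2]{HaHu}, precisely the implication you spelled out. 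Only the exceptional case $|Z|=n+2$ with three collinear points is handled by direct comparison using Theorem~\ref{thm:n+2 points}. So the paper executes your outlined strategy where the interpolation results make it feasible; the general obstacle you identified remains untouched.
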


Chudnovsky \cite{Chud} showed that this is true if $n = 2$. Furthermore, the conjecture is known if $Z$ is a set of general points  by Fouli, Mantero, and Xie \cite{FMX} (see also \cite{DTG} if $|Z| \ge 2^n$). 

We will derive instances of Chudnovsky's conjecture as a consequence of the following result, where $\fm = (x_0,\ldots,x_n)$ denotes the homogeneous maximal ideal of $R$.  

\begin{proposition} 
    \label{prop:suff Chud}
Let $Z \subset \PP^n$ be a set of at most $n+3$ points spanning $\PP^n$. Suppose also that no three points are collinear if $|Z| = n+2$ and that 
 $Z$ is in linearly general position if $|Z| = n+3$. 
Then one has for every $k \ge 1$
\[
I_Z^{(kn)} \subset \fm^{(n-1)k} \cdot I_Z^k. 
\]
\end{proposition}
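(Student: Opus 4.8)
The plan is to deduce the containment from two ingredients: the Ein--Lazarsfeld--Smith (and, in positive characteristic, Hochster--Huneke) symbolic power containment $I_Z^{(kn)}\subseteq I_Z^k$, which applies because $Z$ consists of points, of codimension $n$, in $\PP^n$; and the exact values of $\alpha(I_Z^{(m)})$ already recorded in Remark~\ref{rem:n+1 points} and Theorems~\ref{thm:n+2 points} and~\ref{thm:n+3 points}. The elementary bridge between these is the observation that, for any homogeneous ideal $J$ generated in degrees $\le e$ and any $N\ge 0$, one has $[\fm^N J]_j=[J]_j$ whenever $j\ge N+e$: writing a form in $[J]_j$ as a combination of the generators of $J$, the coefficients have degree $\ge j-e\ge N$, hence lie in $\fm^N$.

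First I would record that, under the stated hypotheses, $I_Z$ is generated by quadrics. Since $Z$ spans $\PP^n$ we have $[I_Z]_1=0$, so only degree $2$ is at issue. This is classical for points in linearly general position with at most $2n+1$ points, which covers $|Z|=n+1$ and, since $n+3\le 2n+1$ for $n\ge 2$, also $|Z|=n+3$; for $|Z|=n+2$ with no three points collinear one checks it either directly or by reduction to the general‑position case using the coordinate/product description in the proof of Theorem~\ref{thm:n+2 points}, where $Z$ is $t+2$ general points of a subspace $\PP^t$ together with $n-t$ coordinate points. Consequently $I_Z^k$ is generated in degree $2k$, and the observation above gives $[\fm^{(n-1)k}I_Z^k]_j=[I_Z^k]_j$ for every $j\ge (n+1)k$.

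It therefore suffices to prove $\alpha(I_Z^{(kn)})\ge (n+1)k$: then every nonzero graded component $[I_Z^{(kn)}]_j$ sits in a degree $j\ge (n+1)k$, where it is contained in $[I_Z^k]_j=[\fm^{(n-1)k}I_Z^k]_j$ by the Ein--Lazarsfeld--Smith containment. For the bound: if $|Z|=n+1$ then $\alpha(I_Z^{(kn)})=(n+1)k$ by Remark~\ref{rem:n+1 points}; if $|Z|=n+2$ then Theorem~\ref{thm:n+2 points} gives $\alpha(I_Z^{(kn)})=\lceil (2n+2-t)kn/(2n-t)\rceil\ge (n+1)k$, since $(2n+2-t)n-(n+1)(2n-t)=t\ge 0$; and if $|Z|=n+3$ then Theorem~\ref{thm:n+3 points} gives $\alpha(I_Z^{(kn)})=(n+2)k$ for $n$ even and $\alpha(I_Z^{(kn)})\ge \lceil (n+1)(n+3)kn/(n^2+2n-1)\rceil\ge (n+1)k$ for $n$ odd, using the lower bound of that theorem valid for all $k$ together with $(n+3)n\ge n^2+2n-1$. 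This finishes the argument modulo the two ingredients above.

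I expect the only genuinely delicate point to be the generation of $I_Z$ by quadrics in the cases with extra linear dependence, namely $|Z|=n+2$ with $t<n$, since the standard references deal with points in linearly general position. The hypothesis that no three points are collinear is exactly what is needed there: once a collinear triple is present $I_Z$ picks up a cubic minimal generator, so $I_Z^k$ is generated only up to degree $3k$ and the needed inequality $\alpha(I_Z^{(kn)})\ge (n-1)k+3k$ fails (one has $(2n+1)n<(n+2)(2n-1)$ for $n>1$), which is precisely why that case is excluded. A minor further point is that the Ein--Lazarsfeld--Smith containment is invoked over an arbitrary field; this is available (Hochster--Huneke in positive characteristic), so it causes no trouble. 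Everything else is bookkeeping with the already‑established initial degrees.
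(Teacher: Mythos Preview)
Your approach is essentially the paper's: both reduce the containment to the inequality $\alpha(I_Z^{(kn)})\ge k\,e^+(I_Z)+k(n-1)$, where $e^+(I_Z)$ is the maximal degree of a minimal generator of $I_Z$. The paper cites \cite[Proposition~2.3]{HaHu} for this reduction, while you unpack it explicitly via the Ein--Lazarsfeld--Smith/Hochster--Huneke containment together with the degree observation $[\fm^N J]_j=[J]_j$ for $j\ge N+e$. That part is fine.

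There is, however, a genuine gap. Your claim that $I_Z$ is generated by quadrics is false when $n=2$ and $|Z|=n+3=5$: five points in linearly general position in $\PP^2$ lie on a unique (smooth) conic $Q$, and $I_Z$ requires two additional cubic minimal generators, so $e^+(I_Z)=3$ (this is exactly why the paper, citing \cite[Theorem~1.1]{N}, distinguishes $n=2$ from $n\ge 3$). The ``classical'' bound you invoke is for at most $2n$ points in linearly general position, not $2n+1$; since $n+3\le 2n$ only for $n\ge 3$, the boundary case $n=2$ escapes. This breaks your argument as written: with $e^+(I_Z)=3$ the ideal $I_Z^k$ is generated in degrees up to $3k$, so your observation only yields $[\fm^{k}I_Z^k]_j=[I_Z^k]_j$ for $j\ge 4k$, and the bound $\alpha(I_Z^{(2k)})\ge 3k$ you check is insufficient. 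Concretely, a product of $k$ cubic generators lies in $[I_Z^k]_{3k}$ but not in $[\fm^k I_Z^k]_{3k}$, since the only element of $[I_Z^k]_{2k}$ is $Q^k$ and $Q\nmid C$ for a cubic minimal generator $C$. The fix is immediate: for $n=2$, $|Z|=5$, take $e^+(I_Z)=3$ and verify $\alpha(I_Z^{(2k)})\ge 4k$, which is exactly what Theorem~\ref{thm:n+3 points} gives ($\alpha(I_Z^{(2k)})=\lceil 4\cdot 2k/2\rceil=4k$). With that one case handled separately, your argument and the paper's coincide.
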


\begin{proof}
Denote by $e^+ (I_Z)$ the maximum degree of a minimal generator of the ideal $I_Z$. Then, by \cite[Proposition 2.3]{HaHu}, it is enough to show that 
\begin{equation}
    \label{eq:suff Chud}
\alpha (I_Z^{(nk)}) \ge k\, e^+ (I_Z) + k\, (n-1). 
\end{equation}
Let $|Z| = n+1$. Then $e^+ (I_Z) = 2$, and Inequality~\eqref{eq:suff Chud} is an equality by Remark~\ref{rem:n+1 points}. 

Let $|Z|= n+2$. As above, let $t$ be the least integer such that a subset of $t+2$ points of $Z$ is linearly dependent. Our assumption says $t \ge 2$. Hence \cite[Theorem C]{HSV} gives $e^+ (I_Z) = 2$. Now a computation using  Theorem~\ref{thm:n+2 points} shows that Inequality~\eqref{eq:suff Chud} is true. 

Finally, let $|Z| = n+3$. Then one has $e^+ (I_Z) = 2$ if $n \ge 3$ and $e^+ (I_Z) = 3$ if $n=2$ (see, e.g., \cite[Theorem 1.1]{N}. Now Theorem~\eqref{thm:n+3 points} implies the desired inequality. 
\end{proof}

Note that the assertion of Proposition~\eqref{prop:suff Chud} is also known if $Z \subset \PP^n$ is a set of general points with $|Z| \ge 2^n$ (see \cite[Theorem 1]{DTG}). 

\begin{corollary}
     \label{cor:Chud} 
Let $Z \subset \PP^n$ be a set of at most $n+3$ points spanning $\PP^n$. Suppose  that 
 $Z$ is in linearly general position if $|Z| = n+3$. Then Chundnosky's conjecture is true for $Z$. 
\end{corollary}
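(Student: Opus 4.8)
The plan is to deduce Corollary~\ref{cor:Chud} directly from Proposition~\ref{prop:suff Chud}, together with the standard fact that a containment of the shape proved there forces the Chudnovsky bound on the Waldschmidt constant. Concretely, I would first recall that for any set of points $Z$ one has $\alpha(\fm^{(n-1)k} \cdot I_Z^k) \ge (n-1)k + k\,\alpha(I_Z)$, since a minimal generator of a product has degree at least the sum of the initial degrees of the factors, and $\alpha(\fm^{(n-1)k}) = (n-1)k$. Combining this with the containment $I_Z^{(kn)} \subset \fm^{(n-1)k} \cdot I_Z^k$ of Proposition~\ref{prop:suff Chud} gives
\[
\alpha(I_Z^{(kn)}) \ge (n-1)k + k\,\alpha(I_Z)
\]
for every $k \ge 1$.

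The second step is to pass to the limit. Dividing the displayed inequality by $kn$ and letting $k \to \infty$, the left-hand side tends to $\widehat\alpha(I_Z)$ by the definition of the Waldschmidt constant (and the existence of that limit, recalled before Proposition~\ref{prop:Waldschmidt}), while the right-hand side tends to $\frac{(n-1) + \alpha(I_Z)}{n}$. This yields exactly
\[
\widehat\alpha(I_Z) \ge \frac{\alpha(I_Z) + n - 1}{n},
\]
which is the assertion of Conjecture~\ref{conj:Chudnovsky} for $Z$.

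The one point that needs a little care is the hypothesis mismatch: Proposition~\ref{prop:suff Chud} additionally assumes that no three points of $Z$ are collinear when $|Z| = n+2$, whereas Corollary~\ref{cor:Chud} only assumes $Z$ spans $\PP^n$ (plus linearly general position when $|Z| = n+3$). So in the $|Z| = n+2$ case I would split off the situation where three points are collinear, i.e. $t = 1$ in the notation of Theorem~\ref{thm:n+2 points}. There $\widehat\alpha(I_Z) = \lceil \frac{2n+1}{2n-1}\rceil = 2$ by Proposition~\ref{prop:Waldschmidt}(a), while $\alpha(I_Z) = \lceil \frac{2n+1}{2n-1}\rceil = 2$ as well (take $k=1$ in Theorem~\ref{thm:n+2 points}), so the Chudnovsky bound reads $2 \ge \frac{2 + n - 1}{n} = \frac{n+1}{n}$, which holds since $n \ge 2$. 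The cases $|Z| = n+1$ and $|Z| = n+2$ with $t \ge 2$ and $|Z| = n+3$ are all covered directly by Proposition~\ref{prop:suff Chud} via the limit argument above.

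There is no real obstacle here; the mild bookkeeping over the collinear subcase is the only thing to watch. I would write the proof as: "By Proposition~\ref{prop:suff Chud} (resp. a direct check when $|Z| = n+2$ and three points of $Z$ are collinear), the inequality $\alpha(I_Z^{(kn)}) \ge (n-1)k + k\,\alpha(I_Z)$ holds for all $k \ge 1$; dividing by $kn$ and letting $k \to \infty$ gives $\widehat\alpha(I_Z) \ge \frac{\alpha(I_Z) + n - 1}{n}$."
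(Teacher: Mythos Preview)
Your proof is correct and follows the same two-step structure as the paper's: invoke Proposition~\ref{prop:suff Chud} in the cases where it applies, and treat separately the case $|Z|=n+2$ with three collinear points ($t=1$) using Theorem~\ref{thm:n+2 points}. The only difference is cosmetic: the paper outsources the passage from the containment $I_Z^{(kn)}\subset\fm^{(n-1)k}I_Z^k$ to the Chudnovsky bound by citing \cite[Lemma~3.2]{HaHu}, whereas you spell that implication out directly via initial degrees and the limit defining $\widehat\alpha$.
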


\begin{proof}
By \cite[Lemma 3.2]{HaHu}, this follows from Proposition~\ref{prop:suff Chud}, unless $|Z| = n+2$ and $Z$ contains three collinear points. In the latter case, the claim is implied by Theorem~\ref{thm:n+2 points} with $t=1$. 
\end{proof}

For an arbitrary set of points $Z \subset \PP^n$ and any $k \ge1$,  
Esnault and 
Viehweg showed in  \cite{EV} that  $\widehat\alpha(I_Z) \ge \frac{\alpha(I_{Z}^{(k)}) +1}{n + k-1}$ if the characteristic of the base field $K$ is zero.  Demailly \cite{Demailly} proposed the following strengthening of Chudnovsky's conjecture. 

\begin{conjecture}[Demailly]
   \label{conj:Demailly}
Any finite set of points $Z \subset \PP^n$ satisfies $ \widehat\alpha(I_Z) \ge \frac{\alpha(I_{Z}^{(k)}) +n -1}{n + k -1}$ for every $k \ge 1$. 
\end{conjecture}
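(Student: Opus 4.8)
Demailly's conjecture is open in general; the plan is to derive it from the preceding results for the configurations treated here, namely for $Z \subset \PP^n$ spanning $\PP^n$ with $|Z| \le n+3$, in linearly general position when $|Z| = n+3$, paralleling the step from Proposition~\ref{prop:suff Chud} to Corollary~\ref{cor:Chud}. The case $k = 1$ of Demailly's inequality is exactly Chudnovsky's conjecture and is already settled in Corollary~\ref{cor:Chud}, so one fixes $k \ge 2$. Since the Waldschmidt constant $\widehat\alpha(I_Z)$ is given in closed form by Proposition~\ref{prop:Waldschmidt}, and the initial degrees $\alpha(I_Z^{(k)})$ by Theorems~\ref{thm:n+2 points} and \ref{thm:n+3 points}, the conjecture reduces for these $Z$ to the purely numerical inequality $\widehat\alpha(I_Z)\,(n+k-1) \ge \alpha(I_Z^{(k)}) + n - 1$.

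The verification of this inequality is routine. In each case $\widehat\alpha(I_Z) = p/d$ for explicit integers $p,d$ and $\alpha(I_Z^{(k)}) = \lceil pk/d \rceil$, so the left-hand side equals $\widehat\alpha(I_Z)\,k + \widehat\alpha(I_Z)(n-1)$, while $\alpha(I_Z^{(k)})$ exceeds $\widehat\alpha(I_Z)\,k$ by at most $\frac{d-1}{d}$. It therefore suffices to check $(\widehat\alpha(I_Z) - 1)(n-1) \ge \frac{d-1}{d}$, which in each of the three cases ($|Z| = n+2$; $|Z| = n+3$ with $n$ even; $|Z| = n+3$ with $n$ odd) unwinds to an elementary true inequality in $n$ — for $|Z| = n+2$ it comes down to $t \ge 1$ with $t$ as in Theorem~\ref{thm:n+2 points} — the bound being tight exactly when $|Z| = n+2$ contains three collinear points (and, as one should check, when $|Z| = n+1$).

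The one genuine obstacle is that this argument requires the \emph{exact} value of $\alpha(I_Z^{(k)})$ (or at least a sharp upper bound), and Theorems~\ref{thm:n+2 points} and \ref{thm:n+3 points} supply it for all $k$ only when $|Z| \le n+2$ or when $|Z| = n+3$ with $n$ even; if $|Z| = n+3$ and $n$ is odd one has the exact value only for $k$ divisible by $\frac12(n^2+2n-1)$ or for $k$ large, and otherwise just the lower bound $\alpha(I_Z^{(k)}) \ge \lceil \frac{(n+1)(n+3)k}{n^2+2n-1} \rceil$, which is the wrong direction for Demailly. Thus in the odd $(n+3)$-point case the conjecture will follow only for those $k$, pending the conjectural sharpening of Theorem~\ref{thm:n+3 points}. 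One can instead try to reduce Demailly's bound for a fixed $k$ to a symbolic-power containment $I_Z^{(m(n+k-1))} \subseteq \fm^{m(n-1)}(I_Z^{(k)})^m$ for all $m$ — applying $\alpha(\cdot)$ and letting $m \to \infty$ then yields the conjecture — but proving such a containment, in the spirit of \cite[Proposition 2.3]{HaHu}, still hinges on controlling $\alpha(I_Z^{(j)})$ and $e^+(I_Z^{(k)})$ for these configurations, so it runs into the same bottleneck while being technically heavier; I would therefore present the direct verification as the main argument.
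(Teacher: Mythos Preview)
Your proposal is correct and follows essentially the same approach as the paper. The paper treats Demailly's conjecture for these configurations in Corollary~\ref{cor:Dem}, whose entire proof reads ``Using Remark~\ref{rem:n+1 points} as well as Theorems~\ref{thm:n+2 points} and \ref{thm:n+3 points}, this follows by straightforward computations''; you have supplied exactly those computations, including the clean reduction to $(\widehat\alpha(I_Z)-1)(n-1) \ge (d-1)/d$, and you correctly identify the gap in the odd $(n{+}3)$-point case for small $k$, which the paper also leaves open.
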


If $k = 1$, this is Conjecture~\ref{conj:Chudnovsky}. Recently, Malara, Szemberg, and Szpond established Demailly's conjecture for sets of general points $Z \subset \PP^n$ with $|Z| \ge (k+1)^n$ (see  \cite{MSS}). Our results imply: 

\begin{corollary}
     \label{cor:Dem} 
Let $Z \subset \PP^n$ be a set of at most $n+3$ points spanning $\PP^n$. If $|Z| = n+3$ suppose  that 
 $Z$ is in linearly general position. Then Demailly's conjecture is true for $Z$ and every $k \ge 1$, unless $|Z| = n+3$ and $n$ is odd. In the latter case,  Demailly's conjecture holds whenever  $k \ge \frac{(n^2 + n + 1) (n^2 + 2n -1)}{2 (n+2)}$. 
\end{corollary}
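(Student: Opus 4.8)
The plan is to derive Demailly's conjecture from the explicit values of $\alpha(I_Z^{(k)})$ and $\widehat\alpha(I_Z)$ that were already established, treating the cases $|Z| = n+1$, $|Z| = n+2$, and $|Z| = n+3$ separately. In each case the conjectured inequality $\widehat\alpha(I_Z) \ge \frac{\alpha(I_Z^{(k)}) + n-1}{n+k-1}$ becomes, after clearing denominators, a purely arithmetic inequality between the known closed form for $\widehat\alpha(I_Z)$ (from Proposition~\ref{prop:Waldschmidt} and Remark~\ref{rem:Waldschmidt}(i)) and the known closed form for $\alpha(I_Z^{(k)})$ (from Theorems~\ref{thm:n+2 points} and \ref{thm:n+3 points}). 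So the overall structure is: reduce to an inequality of rational functions of $n$ and $k$, then verify it.

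First I would dispose of the case $|Z| \le n+2$. For $|Z| = n+1$ we have $\alpha(I_Z^{(k)}) = \lceil \frac{(n+1)k}{n}\rceil$ and $\widehat\alpha(I_Z) = \frac{n+1}{n}$, and one checks directly that $\frac{n+1}{n} \ge \frac{\lceil (n+1)k/n\rceil + n-1}{n+k-1}$; the ceiling only helps the right inequality, so it suffices to check it with $\alpha(I_Z^{(k)})$ replaced by $\frac{(n+1)k}{n} + 1$ or to note that $\alpha(I_Z^{(k)}) \le \frac{(n+1)k}{n} + \frac{n-1}{n}$ and substitute. For $|Z| = n+2$ with parameter $t$, $\widehat\alpha(I_Z) = \frac{2n+2-t}{2n-t}$ and $\alpha(I_Z^{(k)}) = \lceil \frac{(2n+2-t)k}{2n-t}\rceil \le \frac{(2n+2-t)k}{2n-t} + \frac{2n-t-1}{2n-t}$, and again substituting this bound reduces the claim to $\frac{2n+2-t}{2n-t}(n+k-1) \ge \frac{(2n+2-t)k}{2n-t} + \frac{2n-t-1}{2n-t} + n-1$, which simplifies to a true inequality after multiplying through by $2n-t$ (the $k$-terms cancel and one is left with a statement about $n$ and $t$ with $1 \le t \le n$).

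For $|Z| = n+3$ and $n$ even, $\widehat\alpha(I_Z) = \lceil\frac{n+2}{n}\rceil = 2$ and $\alpha(I_Z^{(k)}) = \lceil\frac{(n+2)k}{n}\rceil$ for all $k \ge 1$, so the claim is $2(n+k-1) \ge \lceil\frac{(n+2)k}{n}\rceil + n-1$, i.e.\ $\frac{(n+2)k}{n} + 1 \ge \lceil\frac{(n+2)k}{n}\rceil$ after rearranging, which holds since the left side exceeds the argument of the ceiling by at least one whenever the argument is not an integer, and holds with equality-adjacent slack otherwise; a clean way is to use $\alpha(I_Z^{(k)}) \le \frac{(n+2)k}{n} + \frac{n-1}{n}$. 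For $|Z| = n+3$ and $n$ odd, I would use the final clause of Theorem~\ref{thm:n+3 points}: $\alpha(I_Z^{(k)}) \ge \lceil\frac{(n+1)(n+3)k}{n^2+2n-1}\rceil$ always, with equality when $k \ge \frac{(n^2+n+1)(n^2+2n-1)}{2(n+2)}$ (or when $\frac12(n^2+2n-1) \mid k$). Since $\widehat\alpha(I_Z) = \lceil\frac{(n+1)(n+3)}{n^2+2n-1}\rceil = 2$ (as $n^2+2n-1 < 2(n^2+2n-1) - \text{something}$... precisely $\frac{(n+1)(n+3)}{n^2+2n-1} = \frac{n^2+4n+3}{n^2+2n-1} \in (1,2)$ for $n \ge 3$), the conjecture for these $k$ reads $2(n+k-1) \ge \frac{(n+1)(n+3)k}{n^2+2n-1} + n-1$, and since $\frac{(n+1)(n+3)}{n^2+2n-1} < 2$ the coefficient of $k$ on the left dominates, so the inequality holds for all large $k$ once it holds at the threshold value; I would verify it holds at $k = \frac{(n^2+n+1)(n^2+2n-1)}{2(n+2)}$ and then note monotonicity in $k$.

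The main obstacle I anticipate is not conceptual but the bookkeeping with ceiling functions at the threshold in the odd case: one must check that the slack $2(n+k-1) - (n-1) - \frac{(n+1)(n+3)k}{n^2+2n-1} = (n-1) + \frac{(n^2-3)k}{n^2+2n-1}$ (using $2(n^2+2n-1) - (n+1)(n+3) = n^2 - 3$) is nonnegative, which it visibly is for $n \ge 2$, so in fact the even-denominator manipulations go through cleanly and the only real case-splitting is whether $k$ is large enough for the equality $\alpha(I_Z^{(k)}) = \lceil\frac{(n+1)(n+3)k}{n^2+2n-1}\rceil$ to be available — which is exactly the hypothesis in the statement. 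So the proof is essentially: for $|Z| \le n+2$ and for $|Z| = n+3$ with $n$ even, the inequality holds unconditionally by the arithmetic above; for $|Z| = n+3$ with $n$ odd, invoke the equality $\alpha(I_Z^{(k)}) = \lceil\frac{(n+1)(n+3)k}{n^2+2n-1}\rceil$ valid under the stated hypothesis on $k$, and then the same arithmetic gives the result.
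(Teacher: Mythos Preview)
Your approach is exactly the paper's: the proof there reads in full, ``Using Remark~\ref{rem:n+1 points} as well as Theorems~\ref{thm:n+2 points} and \ref{thm:n+3 points}, this follows by straightforward computations.'' So the plan---substitute the known closed forms for $\alpha(I_Z^{(k)})$ and $\widehat\alpha(I_Z)$ into Demailly's inequality and verify the resulting arithmetic case by case---is precisely right, and your treatment of $|Z|\le n+2$ is correct.

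There is, however, a genuine gap in your execution of the $|Z|=n+3$ cases. You take $\widehat\alpha(I_Z)=\lceil\frac{n+2}{n}\rceil=2$ (for $n$ even) and $\widehat\alpha(I_Z)=\lceil\frac{(n+1)(n+3)}{n^2+2n-1}\rceil=2$ (for $n$ odd), evidently copying the ceiling brackets from Proposition~\ref{prop:Waldschmidt}. But those ceilings are typos: the Waldschmidt constant is the limit $\lim_{k\to\infty}\alpha(I_Z^{(k)})/k$, and from $\alpha(I_Z^{(k)})=\lceil ck\rceil$ one obtains $\widehat\alpha(I_Z)=c$, not $\lceil c\rceil$. (Indeed, you silently made exactly this correction yourself in the $|Z|\le n+2$ cases.) Since Demailly's inequality is a \emph{lower} bound on $\widehat\alpha(I_Z)$, replacing the true value $\frac{n+2}{n}<2$ (for $n\ge 4$ even) or $\frac{(n+1)(n+3)}{n^2+2n-1}<2$ by $2$ yields a weaker statement than the conjecture, so what you verified does not establish it. The repair is routine: with $\widehat\alpha(I_Z)=\frac{n+2}{n}$ and $\alpha(I_Z^{(k)})\le\frac{(n+2)k+n-1}{n}$, the inequality $\frac{n+2}{n}(n+k-1)\ge\alpha(I_Z^{(k)})+n-1$ reduces after cancelling the $k$-terms to $(n+2)(n-1)\ge(n+1)(n-1)$; the odd case is analogous and reduces to $n^2\ge 2$. (A minor arithmetic slip as well: $2(n^2+2n-1)-(n+1)(n+3)=n^2-5$, not $n^2-3$.)
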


\begin{proof}
Using Remark~\ref{rem:n+1 points} as well as Theorems~\ref{thm:n+2 points} and \ref{thm:n+3 points}, this follows by straightforward computations. 
\end{proof}

As a final application, we determine further invariants that were introduced to study the containment problem. Following  Bocci and Harbourne \cite{BH},  
the {\em resurgence} of a homogeneous $I$ is 

\[
\rho(I) = \sup\left\{\frac{m}{k} \; | \;  I^{(k)} \not\subseteq I^m \right \}.
\]
Later an asymptotic version  was defined by 
Guardo, Harbourne, and Van Tuyl \cite{GHVT} as 
\[
\rho_a(I) = 
\sup\left\{\frac{m}{k}  \; | \; I^{(kt)} \not\subseteq I^{mt} \, \, \, 
\mbox{for all $t \gg 0$.}\right\}
\]

\begin{corollary}
     \label{cor:resurgence} 
Let $Z \subset \PP^n$ be a set of at most $n+3$ points spanning $\PP^n$. Suppose  that no three points are collinear if $|Z| = n+2$ and that 
 $Z$ is in linearly general position if $|Z| = n+3$. 
 Then one has 
 \[
 \rho_a(I_Z) = \rho (I_Z) = \frac{2}{\widehat\alpha(I_Z)}. 
 \]
 \end{corollary}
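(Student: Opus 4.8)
The plan is to sandwich the resurgence $\rho(I_Z)$ between $\frac{2}{\widehat\alpha(I_Z)}$ (as an upper bound) and the asymptotic resurgence $\rho_a(I_Z)$ (as a lower bound), and to separately show $\rho_a(I_Z) \ge \frac{2}{\widehat\alpha(I_Z)}$, so that all three quantities coincide. The general inequalities $\rho_a(I_Z) \le \rho(I_Z)$ and $\rho(I_Z) \ge \frac{\alpha(I_Z)}{\widehat\alpha(I_Z)}$ are standard (see \cite{BH, GHVT}), so the crux is an \emph{upper} bound on $\rho(I_Z)$ and a matching \emph{lower} bound on $\rho_a(I_Z)$, both equal to $\frac{2}{\widehat\alpha(I_Z)}$. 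Note first that under the stated hypotheses $\alpha(I_Z)=2$ in all cases: for $|Z|=n+1$ this is clear, for $|Z|=n+2$ with no three collinear points it follows from \cite[Theorem C]{HSV} (as used in the proof of Proposition~\ref{prop:suff Chud}), and for $|Z|=n+3$ in linearly general position with $n\ge 2$ one has $\alpha(I_Z)=2$ unless $n=2$, where $\alpha(I_Z)=3$ but then $|Z|=5$ general points in $\PP^2$, a classically understood case. So we expect the clean relation $\rho(I_Z)=\frac{\alpha(I_Z)}{\widehat\alpha(I_Z)}=\frac{2}{\widehat\alpha(I_Z)}$ (treating the $\PP^2$ exception, if it arises, by hand via known formulas for five general points).

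For the upper bound on $\rho(I_Z)$: suppose $I_Z^{(k)}\not\subseteq I_Z^m$. Comparing initial degrees, $I_Z^m$ has a generator in degree $m\cdot\alpha(I_Z)=2m$, while any nonzero element of $I_Z^{(k)}$ has degree at least $\alpha(I_Z^{(k)})\ge k\,\widehat\alpha(I_Z)$. Thus a necessary condition for noncontainment is that $I_Z^{(k)}$ reaches down below where $I_Z^m$ can; more precisely, one uses the criterion (see \cite[Lemma 2.3.2 or similar]{BH, GHVT}) that $I_Z^{(k)}\subseteq I_Z^m$ whenever $\alpha(I_Z^{(k)})\ge m\cdot e^+(I_Z) + $ (a correction term) — but the cleanest route is: noncontainment forces $\frac{m}{k} > \frac{\alpha(I_Z^{(k)})}{k\,\alpha(I_Z)}$-type bounds, and letting this run, combined with $\alpha(I_Z^{(k)})/k \ge \widehat\alpha(I_Z)$, gives $\frac mk < \frac{2}{\widehat\alpha(I_Z)}$ for all noncontaining pairs. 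Here I expect to invoke the containment results already available — in particular Proposition~\ref{prop:suff Chud}, which gives $I_Z^{(kn)}\subseteq \fm^{(n-1)k}I_Z^k\subseteq I_Z^k$ — to convert the asymptotic statement $\alpha(I_Z^{(k)})/k\to\widehat\alpha(I_Z)$ into the honest bound $\rho(I_Z)\le \frac{\alpha(I_Z)}{\widehat\alpha(I_Z)}$. This is the standard Bocci–Harbourne argument: $\rho(I)\le\frac{\alpha(I)}{\widehat\alpha(I)}$ holds for any ideal.

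For the lower bound on $\rho_a(I_Z)$: since $\widehat\alpha(I_Z)$ is a limit, for $t\gg 0$ we have $\alpha(I_Z^{(t)})$ close to $t\,\widehat\alpha(I_Z)$, and by the subadditivity/known structure of $\alpha$ on symbolic powers (and using Theorems~\ref{thm:n+2 points}, \ref{thm:n+3 points} which give $\alpha(I_Z^{(k)})$ exactly or up to a bounded error in the relevant ranges), one produces, for any rational $\frac mk < \frac{2}{\widehat\alpha(I_Z)}$, a sequence witnessing $I_Z^{(kt)}\not\subseteq I_Z^{mt}$ for all large $t$: concretely $mt\cdot\alpha(I_Z) = 2mt > \alpha(I_Z^{(kt)})$ for $t\gg0$, and an element of smallest degree in $I_Z^{(kt)}$ then cannot lie in $I_Z^{mt}$ since the latter sits in degrees $\ge 2mt$. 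Hence $\rho_a(I_Z)\ge\frac{2}{\widehat\alpha(I_Z)}$. Combining $\frac{2}{\widehat\alpha(I_Z)}\le \rho_a(I_Z)\le\rho(I_Z)\le\frac{\alpha(I_Z)}{\widehat\alpha(I_Z)}=\frac{2}{\widehat\alpha(I_Z)}$ forces equality throughout.

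The main obstacle I anticipate is the $\PP^2$ case with $|Z|=5$, where $\alpha(I_Z)=3$ rather than $2$: there the clean identity $\rho=\frac{\alpha}{\widehat\alpha}$ still needs $\frac{\alpha(I_Z)}{\widehat\alpha(I_Z)}=\frac{2}{\widehat\alpha(I_Z)}$ to fail literally, so the statement $\rho_a=\rho=\frac{2}{\widehat\alpha(I_Z)}$ must be checked directly against the explicit symbolic power degrees for five general points in $\PP^2$ (Proposition~\ref{prop:Waldschmidt} gives $\widehat\alpha(I_Z)=\lceil 15/7\rceil$... one must recompute carefully: for $n=2$, $|Z|=n+3=5$, $\widehat\alpha = \lceil(n+1)(n+3)/(n^2+2n-1)\rceil = \lceil 15/7\rceil = 3$, so actually $\frac{2}{\widehat\alpha}$ would be $2/3$, which is wrong — the resurgence of five general points in $\PP^2$ is well known to be $\frac{\alpha}{\widehat\alpha}$ with $\alpha=3$). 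This suggests the formula in the corollary is really $\rho=\frac{\alpha(I_Z)}{\widehat\alpha(I_Z)}$ and the "$2$" is the value of $\alpha(I_Z)$ in the generic situation; so the safe plan is to prove $\rho_a(I_Z)=\rho(I_Z)=\frac{\alpha(I_Z)}{\widehat\alpha(I_Z)}$ by the sandwich above and then observe $\alpha(I_Z)=2$ under the hypotheses (with the $\PP^2$, $|Z|=5$ instance either excluded by "no three collinear" forcing a genuinely general configuration where still $\alpha=3$, or handled as a documented exception). I would flag this subtlety and pin down $\alpha(I_Z)$ precisely in each regime before asserting the displayed equality, since the whole corollary hinges on that numerical identification.
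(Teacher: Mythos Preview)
Your sandwich strategy is the right shape, but the upper bound step contains a genuine error. You assert that ``$\rho(I)\le\frac{\alpha(I)}{\widehat\alpha(I)}$ holds for any ideal'' as a ``standard Bocci--Harbourne argument,'' but the Bocci--Harbourne inequality goes the \emph{other} way: one always has $\rho(I)\ge\frac{\alpha(I)}{\widehat\alpha(I)}$, and indeed this is how the \emph{lower} bound in the chain arises. Your own degree-comparison reasoning illustrates the problem: the observation $\alpha(I_Z^{(k)})<m\,\alpha(I_Z)$ is a \emph{sufficient} condition for $I_Z^{(k)}\not\subseteq I_Z^m$, not a necessary one, so it produces lower bounds for $\rho$, not upper bounds. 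Without a correct upper bound, the sandwich never closes.

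The missing ingredient is the one you brush past: the maximum generator degree $e^+(I_Z)$. The paper's proof invokes the Guardo--Harbourne--Van Tuyl chain
\[
\frac{\alpha(I_Z)}{\widehat\alpha(I_Z)} \le \rho_a(I_Z) \le \rho(I_Z) \le \frac{e^+(I_Z)}{\widehat\alpha(I_Z)}
\]
from \cite[Theorem 1.2]{GHVT}, and then uses the fact (recorded in the proof of Proposition~\ref{prop:suff Chud}) that under the stated hypotheses $e^+(I_Z)=2$. Since also $\alpha(I_Z)=2$, the chain collapses and the corollary follows in one line. Your proposal mentions $e^+$ in passing but never actually uses it; that is precisely the lever that makes the upper bound work. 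As a side remark, your computation for $n=2$, $|Z|=5$ applied the odd-$n$ formula for $\widehat\alpha$; since $n=2$ is even, Proposition~\ref{prop:Waldschmidt} gives $\widehat\alpha(I_Z)=\frac{n+2}{n}=2$ there.
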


\begin{proof}
In the proof of Proposition~\ref{prop:suff Chud} we saw that $e^+ (I_Z) = 2$. According to \cite[Theorem 1.2]{GHVT} 
one has $\frac{\alpha(I_Z)}{\widehat\alpha(I_Z)} \le \rho_a(I_Z) \le \rho(I_Z) \le \frac{e^+ (I_Z)}{\widehat\alpha(I)}$, and  the claim follows. 
\end{proof}


\section{Failure of WLP}
   \label{sec:wlp}

A systematic study of the weak Lefschetz property of artinian almost complete intersections was begun in \cite{MMN1}. Their ideals have one more minimal generator than the number of variables of the polynomial ring. This study was continued for ideals whose generators are powers of general linear forms in \cite{MMN2}. There the following conjecture was proposed  in order to complete this line of investigations. Throughout this section $K$ denotes a field of characteristic zero. 

\begin{conjecture}[{\cite[Conjecture 6.6]{MMN2}}]
      \label{conj:uniform}
Consider the ideal  $I = (x_0^{d},\cdots,x_{n}^d,L^{d})$ of the polynomial ring   $R = K[x_0,\ldots,x_n]$, where 
$L \in R$ is a general linear form. If $n \ge 8$ and $n$ is even, then the ring $R/I$ fails the WLP if and only if $d>1$. 
\end{conjecture}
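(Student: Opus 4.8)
The plan is to reduce Conjecture~\ref{conj:uniform} to a question about the Hilbert function of $R/(x_0^d,\ldots,x_n^d, L^d)$ in a single critical degree, and then to answer that question using the duality of Theorem~\ref{thm:duality} together with our regularity and Hilbert function computations for $n+3$ powers. The ``only if'' direction is trivial, since a monomial complete intersection $R/(x_0^d,\ldots,x_n^d)$ has the strong Lefschetz property (in characteristic zero), hence the WLP, and adding a general further power $L^d$ of degree $d=1$ only quotients by a unit-type relation that cannot destroy maximal rank. So the content is the ``if'' direction: for $d>1$ and $n\ge 8$ even, some multiplication map $\times \ell\colon [R/I]_{j-1}\to[R/I]_j$ fails to have maximal rank.

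First I would set $A = R/I$ with $I = (x_0^d,\ldots,x_n^d, L^d)$ and recall the standard reduction (as in \cite{MMN2, MMN1}): since $I$ is an almost complete intersection whose first $n+1$ generators form a complete intersection $C = R/(x_0^d,\ldots,x_n^d)$ with symmetric, unimodal Hilbert function (Lemma~\ref{lem:Hilb ci}), the algebra $A = C/(L^d)$ fails the WLP precisely when the multiplication $\times L^d\colon [C]_{j-d}\to[C]_j$ fails to have maximal rank for some $j$, equivalently when there is a degree where the expected dimension $\max\{0, h_C(j) - h_C(j-d)\}$ of $[A]_j$ is not attained; by semicontinuity and the symmetry/unimodality of $h_C$, the decisive degree is the first degree $j$ past the ``peak'' of $A$, namely $j = \operatorname{reg} A + 1$ compared against the naive expected value. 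Concretely, WLP holds iff $h_A(\operatorname{reg}A) $ equals the value forced by injectivity of $\times L^d$ just below, and fails iff $[A]_{\operatorname{reg}A}$ is ``too big.'' Thus the key computation is: determine $\operatorname{reg}A$ and $\dim_K[A]_{\operatorname{reg}A}$, and check whether this dimension exceeds what maximal rank would allow.

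Next, I would invoke Theorem~\ref{thm:duality}: taking $\ell_1 = x_0,\ldots,\ell_{n+1}=x_n$ and $\ell_{n+2}=L$ (general, hence the $n+2$ forms are in linearly general position) — wait, this is $n+2$ forms in $n+1$ variables, i.e.\ the $|Z|=n+3$ points setting of Section~\ref{sec:n+3 points} with ambient $\PP^n$. So Proposition~\ref{prop:reg n+3 gen} and Theorem~\ref{thm:n+3 points} apply: since $n$ is even, $\operatorname{reg}A = \frac{(n+2)(d-1)}{2}$ and $\dim_K[A]_{\operatorname{reg}A} = 1$. On the other hand, maximal rank of $\times L^d$ from degree $\operatorname{reg}A - d$ would force $\dim_K[A]_{\operatorname{reg}A} = \max\{0,\ h_C(\operatorname{reg}A) - h_C(\operatorname{reg}A - d)\}$, and one computes $h_C$ at these degrees from Lemma~\ref{lem:Hilb ci} (the complete-intersection Hilbert function, with regularity $(n+1)(d-1)$ and symmetry about its midpoint). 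The crux is the numerical comparison: show that for $n\ge 8$ even and $d\ge 2$ the expected value $h_C(\operatorname{reg}A) - h_C(\operatorname{reg}A - d)$ is either negative (so expected dimension $0$, contradicted by the actual value $1$) or at any rate different from $1$, so that $\times L^d$ cannot have maximal rank; hence $A$ fails the WLP. The restriction $n\ge 8$ should emerge exactly from this inequality, where small even $n$ (namely $n\in\{2,4,6\}$) are the genuine exceptions already known in the literature.

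The main obstacle I anticipate is the last step: translating ``$\dim_K[A]_{\operatorname{reg}A} = 1$'' into a definitive failure of maximal rank requires knowing that the naive expected dimension in that degree is \emph{not} equal to $1$, which is a clean but slightly delicate inequality among binomial-coefficient differences of the complete-intersection Hilbert function evaluated near $\tfrac{(n+2)(d-1)}{2}$ versus $\tfrac{n(d-1)}{2}$; one must also handle the parity of $d$ (which shifts $\operatorname{reg}A$ by a controlled amount when $d$ is even versus when $d$ is odd, via the floor in Lemma~\ref{lem:Hilb ci}) and confirm that the failure degree is indeed the regularity degree and not some earlier degree — this uses unimodality of $h_C$ and a monotonicity argument to rule out earlier obstructions, as in \cite{MMN2}. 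A secondary point is that Proposition~\ref{prop:reg n+3 gen} for odd $n$ needs a divisibility or largeness hypothesis on $d$, but since here $n$ is even the full statement is unconditional, so no such restriction propagates to the WLP conclusion.
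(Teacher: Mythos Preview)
Your proposal has a genuine gap, rooted in two related errors.

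First, the reduction ``$A = C/(L^d)$ fails the WLP precisely when $\times L^d\colon [C]_{j-d}\to[C]_j$ fails to have maximal rank'' is false. The WLP for $A$ concerns multiplication by a general \emph{linear} form $\ell$ on $A$, not multiplication by $L^d$ on $C$. In characteristic zero the complete intersection $C = R/(x_0^d,\ldots,x_n^d)$ has the strong Lefschetz property, so $\times L^d$ on $C$ \emph{always} has maximal rank and $h_A(j) = \max\{0,\, h_C(j)-h_C(j-d)\}$ is automatic; your proposed criterion would therefore say $A$ always has the WLP, contradicting the very statement you are trying to prove.

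Second, you miscount: $A$ is a quotient of $R = K[x_0,\ldots,x_n]$ by $n+2$ powers of linear forms, so it falls under Section~\ref{sec:n+2}, not Section~\ref{sec:n+3 points}. Proposition~\ref{prop:reg n+3 gen} does not apply to $A$ and there is no reason for $\dim_K[A]_{\reg A}=1$. The $n{+}3$ situation enters only \emph{after} one quotients by the WLP test form $\ell$: the algebra $A/\ell A$ is a quotient of a polynomial ring in $n$ variables by $n+2 = (n-1)+3$ powers, so Theorem~\ref{thm:n+3 points} applies in $\PP^{n-1}$ with $n-1$ \emph{odd}. This is exactly what the paper does in Proposition~\ref{prop:num conditions}: it chooses $r = \reg(A/\ell A)$ from Theorem~\ref{thm:n+3 points}, uses $[A/\ell A]_r \neq 0$ to see that $\times\ell\colon[A]_{r-1}\to[A]_r$ is not surjective, and then must check the numerical condition $\Delta h_A(r)\le 0$ to conclude that surjectivity was required for maximal rank. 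That numerical condition is precisely the obstacle you glossed over; the paper does \emph{not} verify it for all $n\ge 8$ and $d\ge 2$, but only asymptotically (Theorems~\ref{thm:wlp, d-2 divisible} and \ref{thm:asympt wlp failure}), via an analysis involving Eulerian numbers and $B$-splines. So a complete proof along the lines you sketch is not available in the paper, and your argument as written does not supply one.
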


If confirmed we have the following complete description of the presence of the weak Lefschetz property for almost complete intersections generated by uniform powers of general linear forms.  

\begin{remark}
   \label{rem:complete pic}
We adopt the notation of the above conjecture and list references for the statements that have been established.  We assume $d \ge 1$ (since the case $d = 0$ is not interesting).  
\begin{itemize}
\item If $n \le 2$   then $R/I$ has the weak Lefschetz property for every $d \ge 1$  (see \cite{HMNW, SS, MMN2}). 

\item If $n = 3$   then $R/I$ has the weak Lefschetz property if and only if $d \in \{1, 2\}$  (see \cite{MMN2}).

\item If $n = 4$  then $R/I$ has the weak Lefschetz property if and only if $d \in \{1, 2, 3\}$  (see \cite{MMN2}). 

\item If $n \ge 5$ is odd  then $R/I$ has the weak Lefschetz property if and only if $d = 1$  (see \cite{MMN2}). 

\item  If $n = 6$   then $R/I$  has the weak Lefschetz property if and only if $d \in \{1, 2\}$  (see \cite{DIV} for $d = 3$ and  \cite{MMN2} for $d \neq 3$) .

\item If $n \ge 8$  is even  then $R/I$ has the weak Lefschetz property if and only if $d = 1$ (known for $d=2$ by \cite{M}). 
\end{itemize}

The last item is a restatement of Conjecture~\ref{conj:uniform}. It is open for $d \ge 3$. 
\end{remark}

The goal of this section is to provide further evidence by proving Conjecture~\ref{conj:uniform} in new cases and by reducing it to establishing numerical statements for the most part. The verification of these numerical properties has eluded us for the most part. We hope that highlighting them  will motivate further investigations. 

One of our conditions involves Eulerian numbers. For integers $i, j$ with $0 \le j < i$, the Eulerian number $A (i, j)$ is the number of permutations of sets with $i$ elements that have exactly $j$ ascents. It is explicitly given by 
\begin{equation}
     \label{eq:Eulerian number}
A(i, j) = \sum_{k=0}^{j+1} (-1)^k \binom{i+1}{k} (j+1-k)^i. 
\end{equation}

\begin{proposition}
     \label{prop:num conditions}
Given integers $m \ge 2$ and $q$ with $0 \le q \le 2m$, define a polynomial function $P_{m, q}: \R \to \R$ by 
\[
P_{m, q} (t)= \sum_{k=0}^m (-1)^k \binom{2m+2}{k} \binom{m-1 + \lfloor \frac{mq}{2m+1} \rfloor + (q+1) (m-k) + t [ 2m (m+1) - k (2m+1)]}{2m-1}. 
\]
Then one has: 
\begin{itemize}

\item[(a)] If $P_{m, q}$ is not identical to a positive constant, then Conjecture~\ref{conj:uniform} is true for $n = 2 m$ and every $d \gg 0$ such that $d-1-q$ is divisible by $n+1$. 

\item[(b)] If, for some $q$ with $1 \le q \le 2m$,  $P_{m, q} (t) \le 0$ for every integer $t \ge 0$, then Conjecture~\ref{conj:uniform} is true for $n = 2 m$ and every $d \ge n^2 - n+2$ such that $d-1-q$ is divisible by $n+1$. 

\item[(c)] If $P_{m, 0} (t) \le 0$ for every integer $t \ge 1$, then Conjecture~\ref{conj:uniform} is true for $n = 2 m$ and every $d$ such that $d-1$ is divisible by $n+1$. 

\item[(d)] If $\sum_{k=0}^{m} (-1)^k \binom{2m+2}{k} [2 m (m+1) -k (2m+1)]^{2m-1} \neq 0$ for some integer $m \ge 2$, then 
Conjecture~\ref{conj:uniform} is true for $n = 2 m$ and every $d \gg 0$.

\item[(e)] If $A (2m-1, m-1) - 2 A(2m-1, m-2) + A(2m-1, m-3) < 0$ for some integer $m \ge 5$, then 
Conjecture~\ref{conj:uniform} is true for $n = 2 m$ and every $d \gg 0$.
\end{itemize}

\end{proposition}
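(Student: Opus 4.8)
The plan is to relate the failure of the weak Lefschetz property for the almost complete intersection $R/I$ with $I = (x_0^d,\ldots,x_n^d,L^d)$ to the Hilbert function data we have already computed. Recall that $R/I$ fails the WLP in a given degree precisely when some multiplication map $\times \ell \colon [R/I']_{j-1} \to [R/I']_j$ has neither maximal rank, where $I' = (x_0^d,\ldots,x_n^d)$ is the monomial complete intersection; since $B = R/I'$ has the strong Lefschetz property in characteristic zero, the relevant obstruction is an arithmetic comparison of $h_B(j-1)$, $h_B(j)$, and the number of generators $L^d$ imposes. Concretely, by the standard reduction (as used in \cite{MMN2}), $R/I$ fails the WLP as soon as for some $j$ we have $h_{R/I}(j) \neq \max\{0, h_{R/(x_0^d,\ldots,x_n^d)}(j) - h_{R/(x_0^d,\ldots,x_n^d)}(j-d)\}$, i.e.\ as soon as the Hilbert function of $A = R/(x_0^d,\ldots,x_n^d,L^d)$ is not forced by the exact sequence. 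Since $L$ is a general linear form, $A \cong R/(\ell_1^d,\ldots,\ell_{n+2}^d)$ with the $\ell_i$ in linearly general position; but here we have $n+2$ powers in $n+1$ variables, so this is the $n+2$-points situation of Section~\ref{sec:n+2}, and moreover Proposition~\ref{prop:reg n+3 gen} and Theorem~\ref{thm:duality} give us the Hilbert function of $A$ in the key degree $r = \reg$ exactly. The quantity $P_{m,q}(t)$ is, up to the substitutions spelled out in the proof of Proposition~\ref{prop:reg n+3 gen} (with $n = 2m$ even there replaced by $n = 2m$ here — more precisely the odd-$n$ computation applied to the dual $(n+3)$-point system in $\PP^{n}$), exactly $\dim_K[A]$ minus the value the Hilbert function would have if the WLP held, expressed as a polynomial in $t$ where $d - 1 - q = (n+1)t$.

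First I would make the dictionary precise: set $n = 2m$, write $d - 1 = q + (n+1)t$ with $0 \le q \le 2m$, and identify $P_{m,q}(t)$ with the difference between the actual dimension $\dim_K[A]_j$ in the critical degree $j$ (computed via Theorem~\ref{thm:duality} and the Cremona/Bézout reduction of Proposition~\ref{prop:reg n+3 gen}) and the "expected" dimension $\max\{0, h_B(j) - h_B(j-d)\}$ coming from the complete intersection $B = R/(x_0^d,\ldots,x_n^d)$; the binomial $\binom{2m-1+\rho}{2m-1}$ appearing in Proposition~\ref{prop:reg n+3 gen} is exactly the leading term, and the alternating sum over $k$ with binomials $\binom{2m+2}{k}$ comes from the inclusion-exclusion expansion of $h_B$ as a difference of shifted $\binom{\cdot}{2m-1}$'s (one term per subset of the $2m+2$ relations of degree $\le j$, truncated at $k \le m$ by the degree bound). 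Granting this identification, part (a) is immediate: if $P_{m,q}$ is not identically a positive constant then for $d \gg 0$ (so $t \gg 0$) either $P_{m,q}(t) \neq 0$, forcing a failure of maximal rank in that degree, or $P_{m,q}(t)$ takes a non-positive value, which after checking it is not the "expected zero" again forces failure — in either case the WLP fails, and since we already know it fails for all $d > 1$ is the conjectured statement, we are done. Parts (b) and (c) are the same argument with the quantitative threshold $d \ge n^2 - n + 2$ (resp.\ no threshold, when $q = 0$) replacing "$d \gg 0$", using that $P_{m,q}(t) \le 0$ means the actual dimension does not exceed the expected one, while the complementary surjectivity/injectivity direction fails for degree reasons as in \cite{MMN2}; the threshold is exactly the range in which Proposition~\ref{prop:reg n+3 gen} is proved, so nothing new is needed there.

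For parts (d) and (e) I would extract the leading asymptotics of $P_{m,q}(t)$ as $t \to \infty$. Since $P_{m,q}(t)$ is a polynomial in $t$ of degree $\le 2m-1$, and the dominant contribution of each binomial $\binom{\cdots + t[2m(m+1) - k(2m+1)]}{2m-1}$ is $\frac{1}{(2m-1)!}\left(t[2m(m+1)-k(2m+1)]\right)^{2m-1}$, the leading coefficient of $P_{m,q}$ (which is independent of $q$) is $\frac{1}{(2m-1)!}\sum_{k=0}^m (-1)^k\binom{2m+2}{k}[2m(m+1)-k(2m+1)]^{2m-1}$; part (d) says precisely that if this is nonzero then $P_{m,q}$ is a nonconstant polynomial, so part (a) applies for every $d \gg 0$ (every residue $q$), giving the conclusion. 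For part (e), I would rewrite that leading sum in terms of Eulerian numbers: using formula~\eqref{eq:Eulerian number}, $A(2m-1,j) = \sum_{k=0}^{j+1}(-1)^k\binom{2m}{k}(j+1-k)^{2m-1}$, and a binomial identity (or the $B$-spline / Eulerian-polynomial evaluation of $\sum (-1)^k\binom{2m+2}{k}(x-k)^{2m-1}$ at the relevant half-integer point $x$, which corresponds to evaluating a uniform cubic-type $B$-spline — here the $(2m+1)$-fold convolution — at its centre) shows the leading sum equals a constant multiple of the second difference $A(2m-1,m-1) - 2A(2m-1,m-2) + A(2m-1,m-3)$; hence if this second difference is $< 0$ the leading coefficient is nonzero and part (d), hence part (a), applies. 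The main obstacle I anticipate is establishing this last identity cleanly — pinning down the exact combinatorial identity that turns the alternating binomial sum $\sum_{k=0}^m(-1)^k\binom{2m+2}{k}[2m(m+1)-k(2m+1)]^{2m-1}$ into $\pm c_m\big(A(2m-1,m-1)-2A(2m-1,m-2)+A(2m-1,m-3)\big)$; I would look for it by recognizing $2m(m+1)-k(2m+1) = (2m+1)(m-k) + m$ and pairing the $k$ and $2m+2-k$ terms (the summand is symmetric up to sign about $k = m+1$), then matching against the $B$-spline value $M_{2m+2}\big(\tfrac{2m+1}{2}\big)$ which has a known expression as a difference of Eulerian numbers at three consecutive arguments. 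Everything else — the Hilbert-function bookkeeping, the reduction to maximal-rank maps, the asymptotic degree count — is routine given Proposition~\ref{prop:reg n+3 gen} and the established cases in Remark~\ref{rem:complete pic}.
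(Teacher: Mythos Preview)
Your broad strategy is right, but two of the identifications are off, and each one breaks an essential step.

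First, $P_{m,q}(t)$ is not a discrepancy between an actual and an expected dimension; it is simply $\Delta h_A(r) = h_A(r) - h_A(r-1)$ where $A = R/(x_0^d,\ldots,x_{2m}^d,L^d)$. One sees this by writing $A \cong B/L'B$ with $B$ the monomial complete intersection in \emph{one more} variable, $S = K[y_0,\ldots,y_{2m+1}]$; the SLP of $B$ gives $h_A = \Delta h_B$, and the inclusion--exclusion expansion of $\Delta^2 h_B(r)$ is exactly the defining sum for $P_{m,q}$ after substituting $d-1 = (2m+1)t + q$. The input from Section~\ref{sec:n+3 points} is applied to $A/\ell A$, not to $A$: this is a quotient of a polynomial ring in $2m$ variables by $2m+2$ powers of general linear forms, so it is the \emph{odd} case $n' = 2m-1$ of Proposition~\ref{prop:reg n+3 gen}. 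That proposition supplies two facts at the degree $r$: that $[A/\ell A]_r \neq 0$, so $\times\ell\colon [A]_{r-1}\to[A]_r$ is never surjective; and that $\dim_K[A/\ell A]_r \le \binom{4m-1}{2m-1}$ independently of $d$. It is this uniform upper bound that makes (a) work: since always $\dim_K[A/\ell A]_r \ge h_A(r)-h_A(r-1) = P_{m,q}(t)$, the polynomial $P_{m,q}$ cannot tend to $+\infty$; if it is nonconstant it must tend to $-\infty$, and then for large $t$ one has $P_{m,q}(t)<0$, so $\times\ell$ also fails injectivity. Your sketch of (a) omits this bound, and the assertion that ``$P_{m,q}(t)\neq 0$ forces failure of maximal rank'' is false for positive values.

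Second, the identity you hope for in (e) does not hold: the leading sum in (d) is \emph{not} a fixed multiple of $A(2m-1,m-1)-2A(2m-1,m-2)+A(2m-1,m-3)$. For $m=2$ and $m=3$ the (d)-sum equals $-210$ and $-609840$ while the Eulerian second difference equals $2$ and $15$; the signs are opposite and the ratios unrelated. The paper does not connect the (d)-sum to Eulerian numbers at all. Instead it evaluates $\Delta^2 h_B$ at the \emph{different} degree $j=m(d-1)$, where the formula reduces to $\sum_{k=0}^m(-1)^k\binom{2m+2}{k}\binom{m-1+(m-k)d}{2m-1}$; its leading coefficient in $d$ is $\tfrac{1}{(2m-1)!}\sum_k(-1)^k\binom{2m+2}{k}(m-k)^{2m-1}$, which (via Pascal's rule applied twice to $\binom{2m+2}{k}$) is exactly the Eulerian second difference. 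Negativity of that forces $\Delta^2 h_B(m(d-1))<0$ for $d\gg0$, and a separate sign-change argument --- reading off from $\sum_j\Delta^{2m+2}h_B(j)\,z^j=(1-z^d)^{2m+2}$ that $\Delta^2 h_B$ changes sign at most twice and has its local minimum at $(m+1)(d-1)+1$ --- propagates negativity across the whole interval $[m(d-1),(m+1)(d-1)+1]$, which contains $r$. That propagation step is the missing idea in your outline for (e).
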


\begin{proof}
We proceed in several steps. 

(I) We begin by describing the use of Theorem~\ref{thm:n+3 points} for our argument. 
Adopting the notation of Conjecture~\ref{conj:uniform}, set $A = R/I$, where $I = (x_0^{d},\cdots,x_{n}^d,L^{d})$, $L \in R$ is a general linear form, and $n = 2m$. We consider multiplication by a general linear form $\ell \in R$ on $A$ in one specific degree. Put 
\[
r = \left \lfloor \frac{(n+1) (n+3) (d-1)}{2 (n+2)} \right \rfloor =  \left \lfloor \frac{2 m (m+1) (d-1)}{2 m +1} \right \rfloor 
\]
and consider the exact sequence 
\begin{equation}
    \label{eq:key map}
    [A]_{r-1} \stackrel{\ell}{\longrightarrow} [A]_r \longrightarrow [A/\ell A]_r \longrightarrow 0. 
\end{equation}
Note that $A/\ell A$ is isomorphic to a polynomial ring in $n = 2m$ variables over $K$ modulo an ideal generated by  powers of $n+2$ general linear forms. Hence Theorem~\ref{thm:n+3 points} gives $\reg A/\ell A = r$, and in particular $[A/\ell A]_r \neq 0$, provided $d \ge n^2 - n+2$ or $d-1$ is divisible by $n+1$.  It follows that under these assumptions the multiplication in the above sequence has maximal rank if and only if it is injective. 

(II) Now we relate the numerical conditions given in the statement to the above considerations. To this end consider  
a polynomial ring $S = K[y_0,\ldots,y_{n+1}]$ and let $B = S/(y_0^d,\ldots,y_{n+1}^d)$. Resolving $B$ over $S$ using the Koszul resolution, we get for the Hilbert function of $B$
\[
h_B (j) = \sum_{k=0}^{n+2} (-1)^k \binom{n+2}{k} \binom{n+1 + j - kd}{n+1}, 
\]
where we use the convention that  $\binom{a}{b} = 0$ if $a < b$. 

Let $L' = y_0 + \cdots + y_{n+1}$ and consider $B/L' B$.  There is change of coordinates giving a graded isomorphism $A \cong B/L' B$. Since $B$ has the strong Lefschetz property and $L'$ is a Lefschetz element of $B$ (see \cite{MMN1}) we get for the Hilbert function of $A$ that 
\begin{equation}
     \label{eq:compare hilb}
h_A (j) = \Delta h_B (j), 
\end{equation}
where $\Delta$ is the difference operator defined for any function $h: \Z \to \Z$ by $\Delta h (j) = h (j) - h (j-1)$.  For $k \in \N$, we define $\Delta^{k+1} h$ recursively by $\Delta^{k+1} h = \Delta (\Delta^k h)$. Using that $n = 2m$, this gives 
\begin{equation} 
    \label{eq:second diff}
\Delta h_A (j) = \Delta^2 h_B (j)  = \sum_{k=0}^{2m+2} (-1)^k \binom{2m+2}{k} \binom{2m - 1  + j - kd}{2m - 1}. 
\end{equation}     
Since $r \le (m+1) d$ the summands for $k \ge m+1$ are zero, and  we obtain 
\begin{equation}
    \label{eq:hilb at r}
\Delta h_A (r) = \sum_{k=0}^{m} (-1)^k \binom{2m+2}{k} \binom{2m - 1  + r - kd}{2m - 1}. 
\end{equation}

Now write $d-1 = t (2m + 1) + q$ with integers $t$ and $q$ where $0 \le q \le 2m$. Then a straightforward computation gives 
\[
r = 2m (m+1) t + m q + \left \lfloor \frac{mq}{2m+1} \right \rfloor . 
\]
Using the last two equations, Formula~\ref{eq:hilb at r} becomes
\begin{equation}
    \label{eq:hilb after subs}
\begin{split}    
\Delta h_A (r) & = \sum_{k=0}^{m} (-1)^k \binom{2m+2}{k} \binom{m-1 + \lfloor \frac{mq}{2m+1} \rfloor + (q+1) (m-k) + t [ 2m (m+1) - k (2m+1)]}{2m-1} \\
& = P_{m, q} (t). 
\end{split}
\end{equation}
Therefore, if for some integer $t \ge 0$ we have $P_{m, q} (t) < 0$, then the multiplication in Sequence \eqref{eq:key map} cannot be injective. Combined with Step (I) it follows that the map fails to have maximal rank, and so $A$ fails to have the weak Lefschetz property. If $P_{m, q} (t) = 0$ we conclude analogously because the multiplication map cannot be an isomorphism. 
This proves assertions (b) and (c). 

(III) In order to show (a) we use another key result of  Section~\ref{sec:n+3 points}. If $P_{m, q}$ is identical to a non-positive constant we conclude as in Step (II). 

Suppose $P_{m, q}$ is not a constant polynomial. Then the limit of $P_{m, q} (t)$ as $t$ approaches infinity is either $\infty$ or $- \infty$. In the former case, it follows that ${\displaystyle \lim_{t \to \infty}} \dim_K [A/\ell A]_r = \infty$. However, this is impossible because Proposition~\ref{prop:reg n+3 gen} gives $\dim_K [A/\ell A]_r \le \binom{4m -1}{2m-1}$ independent of $t$. Thus, we must have ${\displaystyle \lim_{t \to \infty}} \dim_K [A/\ell A]_r = - \infty$, which yields $\Delta h_A (r) < 0$ for $t \gg 0$ and so for $d \gg 0$. Now we conclude again as above. 

(IV) Note that the coefficient of $t^{2m-1}$ in $P_{m, q}$ is $\sum_{k=0}^{m} (-1)^k \binom{2m+2}{k} [2 m (m+1) -k (2m+1)]^{2m-1}$. Since it is non-vanishing by assumption, it follows that $P_{m, q}$ is polynomial in $t$ of degree $2m-1$ for every $q$ with $0 \le q \le 2m$. Therefore we obtain (d) as a consequence of (a). 

(V) It remains to show (e). We begin by considering the Hilbert series of $B$. It is 
\[
H_B (z) = \sum_{j \ge 0} h_B (j) z^j = \left (\frac{1 - z^d}{1-z} \right)^{2m+2}. 
\]
It follows that 
$
\sum_{j \ge 0} \Delta^{2m+2} h_B (j) z^j = \left (1 - z^d \right)^{2m+2}$, 
which implies 
\[
 \Delta^{2m+2} h_B (j) = \begin{cases}
 (-1)^i \binom{2m+2}{i} & \text{ if $j = di$ and $ 0 \le i \le 2m+2$} \\
 0 & \text{ otherwise.}
 \end{cases}
\]
Hence, ignoring zero,  $\Delta^{2m+2} h_B$ changes its sign $2m+2$ times on the interval from $0$ to $(2m+2)d$. It follows that $\Delta^{k} h_B$ changes its sign at most $k$ times on the interval from $0$ to $(2m+2)(d-1) + k$. Since $h_B$ is unimodal and  $\Delta^2 h_B$ is symmetric about $(m+1)(d-1) + 1$ we conclude in particular that the sequence  $(\Delta^2 h_B (j))_{j \ge 0}$ is first increasing and then decreasing until it reaches  a local minimum at $j = (m+1)(d-1) + 1$. 

Now we consider $\Delta^2 h_B (m (d-1))$. Using Formula~\eqref{eq:second diff}, we get
\[
\Delta^2 h_B (m (d-1)) =  \sum_{k=0}^{m} (-1)^k \binom{2m+2}{k} \binom{m - 1  + (m- k)d}{2m - 1}. 
\]
Considered as a polynomial function in $d$, its degree is at most $2m-1$ and the coefficient of $d^{2m-1}$ is 
\[
\frac{1}{(2m-1)!} \cdot  \sum_{k=0}^{m} (-1)^k \binom{2m+2}{k} (m-k)^{2m-1}. 
\]
Comparing this with the explicit definition of Eulerian numbers (see Formula~\eqref{eq:Eulerian number}), a straightforward computation gives 
\[
\sum_{k=0}^{m} (-1)^k \binom{2m+2}{k} (m-k)^{2m-1} = A (2m-1, m-1) - 2 A(2m-1, m-2) + A(2m-1, m-3). 
\]
Since the right-hand side is negative by assumption it follows that $\Delta^2 h_B (m (d-1))$ is a polynomial function in $d$ of degree $2m-1$ with negative leading coefficient, which implies 
$\Delta^2 h_B (m (d-1)) < 0$ for every $d \gg 0$. Combined with the above analysis of the function $\Delta^2 h_B$, we see that $\Delta^2 h_B (j) < 0$ whenever $m (d-1) \le j \le (m+1)(d-1) +1$. Now $m (d-1) \le r \le (m+1(d-1)$ implies $0 > \Delta^2 h_B (r) = P_{m, q} (t)$ for every $t \gg 0$ independent of $q$, and thus the claim follows by (a). 
\end{proof}

\begin{remark}

(i) The argument for part (e) actually shows more. It gives  that $\Delta h_A (j) = \Delta^2 h_B (j) < 0$ whenever $m (d-1) \le j \le r$. We also know that $[A/\ell A]_j \neq 0$ for these $j$. Hence, $\times \ell: [A]_{j-1} \to [A]_j$ fails to have maximal rank if $m (d-1) < j \le r$. 

(ii) It is easy to check that the assumptions in Proposition~\ref{prop:num conditions}(a) - (e) are true for particular choices of $m$, $q$, and $t$.  Examples suggest that the sequence of integers considered in Proposition~\ref{prop:num conditions}(d) $(\sum_{k=0}^{m} (-1)^k \binom{2m+2}{k} [2 m (m+1) -k (2m+1)]^{2m-1})_{m \ge2}$ is strictly decreasing,  and so all these numbers are negative. 

(iii)  The assumption in 
Proposition~\ref{prop:num conditions}(e) is \emph{not} true if $m \in \{3, 4\}$. We will see below that it is true if $m \gg 0$. 
\end{remark}

We now verify the numerical assumption in Proposition~\ref{prop:num conditions}(a)  in infinitely many cases, which gives new evidence for Conjecture~\ref{conj:uniform}. 

\begin{theorem}
   \label{thm:wlp, d-2 divisible} 
The algebra $R/(x_0^{d},\cdots,x_{n}^d,L^{d})$ fails to have the weak Lefschetz property if $n \ge 8$ is even and $d-2 \gg 0$ is divisible by $n+1$. 
\end{theorem}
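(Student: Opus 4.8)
The plan is to apply Proposition~\ref{prop:num conditions}(a) with $n = 2m$ and $q = 1$; that is, we want to show that the polynomial function $P_{m, 1}$ is not identically equal to a positive constant whenever $m \ge 4$. By Step (IV) of the proof of Proposition~\ref{prop:num conditions}, $P_{m, q}$ is a polynomial of degree exactly $2m-1$ in $t$ precisely when its leading coefficient $c_m = \sum_{k=0}^{m} (-1)^k \binom{2m+2}{k} [2m(m+1) - k(2m+1)]^{2m-1}$ is nonzero, and in that case $P_{m, q}$ is certainly non-constant, so part (a) applies and we are done. Thus the entire problem reduces to showing $c_m \neq 0$ for $m \ge 4$, equivalently (after the Eulerian-number identity recorded in Step (V)) that a suitable alternating sum of binomials times powers does not vanish.

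First I would reinterpret $c_m$ combinatorially. The sum $\sum_k (-1)^k \binom{2m+2}{k} (x - k(2m+1))^{2m-1}$, evaluated at $x = 2m(m+1)$, is (up to the constant $(2m-1)!$ and a rescaling) a value of the $(2m+2)$-fold finite difference of the monomial $y^{2m-1}$ at equally spaced nodes with step $2m+1$. Since the degree $2m-1$ of the monomial is strictly less than the order $2m+2$ of the difference, this finite difference is identically zero as a function of $x$ — so naively $c_m$ would be $0$. The point, of course, is that the sum in $c_m$ runs only over $0 \le k \le m$, not the full range $0 \le k \le 2m+2$: the summands for $k > m$ vanish because the relevant binomial $\binom{2m-1+r-kd}{2m-1}$ in Formula~\eqref{eq:hilb at r} is zero there, but the polynomial $[2m(m+1) - k(2m+1)]^{2m-1}$ does not vanish for $k > m$. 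So the correct statement is that $c_m$ equals the \emph{negative} of the tail sum $\sum_{k=m+1}^{2m+2} (-1)^k \binom{2m+2}{k} [2m(m+1) - k(2m+1)]^{2m-1}$, and I would instead analyze that tail, or—cleaner—use the Eulerian reformulation from Step~(V): $c_m$ is a nonzero multiple of $A(2m-1, m-1) - 2A(2m-1, m-2) + A(2m-1, m-3)$, the second difference of the central Eulerian numbers of order $2m-1$.

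The cleanest route, then, is to invoke the connection to uniform $B$-splines alluded to in the introduction: the Eulerian numbers $A(i, \cdot)$ are (up to normalization) the values of the uniform $B$-spline $B_{i+1}$ of order $i+1$ at integer points, and the second difference $A(i, j-1) - 2A(i, j) + A(i, j+1)$ is then a sampled value of the distributional second derivative $B_{i+1}''= B_{i-1}$ shifted, hence itself an (up to sign and normalization) evaluation of a lower-order $B$-spline. Concretely, one has the identity $A(i, j-1) - 2A(i, j) + A(i, j+1) = i(i-1) \cdot [\,\text{value of } B_{i-1} \text{ at a shifted central point}\,]$ minus correction terms, and since $B_{i-1}$ is strictly positive on the interior of its support, the whole quantity is strictly negative at the central index for $i = 2m-1$ large. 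I would make this precise via the known formula $A(i, j) = \sum_{k} (-1)^k \binom{i+1}{k}(j+1-k)^i$ together with the $B$-spline recursion $M_{i}(x) = \frac{1}{i-1}\big((x) M_{i-1}(x) + (i - x) M_{i-1}(x-1)\big)$ and positivity of $M_{i-1}$ on $(0, i-1)$; evaluating at the appropriate central point and tracking signs shows the second difference is negative, hence $c_m \ne 0$.

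The main obstacle is that the relevant evaluation point for $2m - 1$ (namely the central index $m-1$, with neighbours $m-2$ and $m-3$) is \emph{not} symmetric about the peak of the Eulerian distribution — the peak of $A(2m-1, \cdot)$ sits at $j = m-1$ (and at $j = m-2$ by the symmetry $A(i,j) = A(i, i-1-j)$), so the three points $m-3, m-2, m-1$ are $m-1$, the peak, and the peak again, i.e. we are taking a one-sided second difference straddling the symmetric pair of maxima. Near the maximum the Eulerian numbers are extremely flat, so a crude bound on $B$-spline values will not separate $A(2m-1, m-1) - A(2m-1, m-2)$ from $A(2m-1, m-2) - A(2m-1, m-3)$; I expect to need the exact asymptotic shape of the Eulerian distribution (it is asymptotically Gaussian with variance $\sim \tfrac{i}{12}$, by the classical central limit theorem for ascents), which gives $A(2m-1, m-1) - 2A(2m-1, m-2) + A(2m-1, m-3) \sim A(2m-1, m-2)\cdot\big(e^{-\delta^2/2\sigma^2}\text{-type expansion}\big) < 0$ for $m$ large, with the strict negativity coming from convexity of the Gaussian tail at that offset. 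Turning this heuristic into a rigorous inequality valid for \emph{all} $m \ge 4$ (not merely $m \gg 0$) — or, alternatively, settling for "$d - 2 \gg 0$" in the statement, which is exactly what the theorem claims — is the delicate part; since the theorem only asserts the conclusion for $d - 2 \gg 0$, I would be content to prove $c_m \neq 0$ for all sufficiently large $m$ via the Gaussian asymptotics of Eulerian numbers and then invoke Proposition~\ref{prop:num conditions}(a).
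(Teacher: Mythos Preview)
Your proposal has a genuine gap: it proves a weaker statement than the theorem claims. The theorem asserts WLP failure for \emph{every} even $n \ge 8$ (i.e., every $m \ge 4$), with $d \gg 0$ depending on $n$. Your Gaussian-asymptotic argument for the Eulerian second difference $A(2m-1, m-1) - 2A(2m-1, m-2) + A(2m-1, m-3)$ only yields $c_m \neq 0$ for $m \gg 0$, hence WLP failure only for $n \gg 0$. You acknowledge this gap yourself but then misread the quantifiers: ``$d-2 \gg 0$'' in the statement lets $d$ grow for a \emph{fixed} $n \ge 8$; it does not let you take $n$ large. What you have outlined is essentially the paper's later Theorem~\ref{thm:asympt wlp failure} (via Proposition~\ref{prop:Euler number peak}), not the present theorem.

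The paper avoids the leading-coefficient analysis entirely by a much simpler idea: rather than study $P_{m,1}$ at infinity, it evaluates $P_{m,1}$ at $t = 0$. When $t = 0$ one has $d = 2$, so by Equations~\eqref{eq:compare hilb} and \eqref{eq:hilb after subs}, $P_{m,1}(0) = \Delta^2 h_B(m)$ where $B = S/(y_0^2,\ldots,y_{2m+1}^2)$ has Hilbert function $h_B(j) = \binom{2m+2}{j}$. This gives the closed form
\[
P_{m,1}(0) = \binom{2m+2}{m} - 2\binom{2m+2}{m-1} + \binom{2m+2}{m-2} = \frac{(2m+2)!}{m!\,(m+4)!}\,(12 - 2m),
\]
which is $\le 0$ for every $m \ge 6$; hence $P_{m,1}$ cannot be identically a positive constant, and Proposition~\ref{prop:num conditions}(a) applies. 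The remaining cases $m \in \{4,5\}$ are dispatched by a direct check that the leading coefficient $c_m$ is nonzero, invoking part~(d). The point you missed is that part~(a) does not require knowing the degree of $P_{m,q}$ --- a single non-positive value suffices --- and the value at $t=0$ is accessible because the underlying complete intersection degenerates to a squarefree one with binomial Hilbert function.
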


\begin{proof}
We continue to use the notation employed in the proof of Proposition~\ref{prop:num conditions}. Consider the case where $q = 1$. Then $\lfloor \frac{mq}{2m+1} \rfloor = 0$, and so  $d-2 = t (2m+1)$ and $r = 2m (m+1) t + m$. Hence, Equations~\eqref{eq:compare hilb} and \eqref{eq:hilb after subs} show that 
\[
 P_{m, 1} (0) =  \Delta^2 h_B (m)  = \sum_{k=0}^{m} (-1)^k \binom{2m+2}{k} \binom{m-1 + 2 (m-k)}{2m-1},  
\]
where $B = S/(y_0^2,\ldots,y_{2m+1}^2)$. The  Hilbert function of $B$ is $h_B (j) = \binom{2m+2}{j}$. Thus, we obtain 
\begin{align*}
\Delta^2 h_B (m)  & = \binom{2m+2}{m} - 2 \binom{2m+2}{m-1} + \binom{2m+2}{m-2} \\
& = \frac{(2m+2)!}{m! \cdot (m+4)!} \big [ (m+4) (m+3) - 2 m (m+4) + m (m-1) \big ] \\
& = \frac{(2m+2)!}{m! \cdot (m+4)!} \big [12 - 2m \big ]. 
\end{align*}
Hence it follows that the function $P_{m, 1}$ cannot be identical to a positive constant if $m \ge 6$, and we conclude by Proposition~\ref{prop:num conditions}(a). 

If $m \in \{4, 5\}$,  the claim follows by Proposition~\ref{prop:num conditions}(d) because 
computations reveal that its assumption  is satisfied. 
\end{proof}

Now we are going to show that the assumption in Proposition~\ref{prop:num conditions}(e) is satisfied if $m \gg 0$. To this end we use a connection to the theory of uniform $B$-splines (see \cite{S46}). For $i \in \N$, define a piecewise polynomial  function $B_i: \R \to \R$ with support in the closed interval $[0, i]$ recursively by 
\[
B_1 (x) = \begin{cases}
1 & \text{ if } x \in [0, 1] \\
0 & \text{ otherwise} 
\end{cases}
\]
and 
\[
B_i (x) = \int_0^1 B_{i-1} (x-t)  d t. 
\]
if $i \ge 2$. In \cite{S73} Schoenberg showed that uniform $B$-splines are related to Eulerian polynomials. More precisely, one has the following result (see \cite[Theorem 1.1]{WXX} or \cite[Corollary 2.7]{He}). 

\begin{lemma}
    \label{lem:euler vs spline}
If $i \ge 2$ and $j$ is any integer, then 
\[
(i-1)! B_i (j) = A(i-1, j-1). 
\]
\end{lemma}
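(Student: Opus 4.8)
The goal is to prove Lemma~\ref{lem:euler vs spline}: for $i \ge 2$ and any integer $j$, one has $(i-1)! \, B_i(j) = A(i-1, j-1)$, where $B_i$ is the uniform $B$-spline defined by iterated integration of the indicator of $[0,1]$.

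My plan is to argue by induction on $i$, but to do so effectively I would first replace the pointwise recursion $B_i(x) = \int_0^1 B_{i-1}(x-t)\,dt$ with the equivalent \emph{difference-quotient} form of the recursion. Differentiating under the integral sign gives $B_i'(x) = B_{i-1}(x) - B_{i-1}(x-1)$, i.e. $B_i' = \nabla B_{i-1}$ where $\nabla f(x) = f(x) - f(x-1)$; combined with the fact that $B_i$ has support in $[0,i]$ and is a spline of degree $i-1$ (so it is determined by its derivative up to the constant forced by compact support), this is the structural input I need. The base case $i = 2$ is a direct computation: $B_2$ is the tent function on $[0,2]$, so $B_2(1) = 1 = 1!\,B_2(1)$ while $A(1,0) = 1$ and $A(1,j) = 0$ for $j \neq 0$, matching $B_2(0) = B_2(2) = 0$, etc.

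For the inductive step the cleanest route is to pass to exponential generating functions / the known closed form rather than to manipulate splines directly. Define $e_i(j) := (i-1)!\, B_i(j)$ for integer $j$. Using $B_i' = \nabla B_{i-1}$ and integrating across consecutive integers, one gets a recursion expressing $B_i(j)$ — equivalently $e_i(j)$ — in terms of the values $e_{i-1}(j)$ and $e_{i-1}(j-1)$ together with a combinatorial weight coming from integrating a degree-$(i-2)$ polynomial over a unit interval; concretely this reproduces the classical Eulerian recursion $A(i-1, j-1) = (j) \cdot A(i-2, j-1) + (i-1-j)\cdot A(i-2, j-2)$. So the strategy is: (1) establish that $e_i$ satisfies this same recursion, (2) check it agrees with Eulerian numbers at $i = 2$ and on the boundary ($e_i(j) = 0$ for $j \le 0$ or $j \ge i$, matching $A(i-1,j-1) = 0$ outside $0 \le j-1 \le i-2$), and (3) invoke uniqueness of the solution to the recursion with those initial/boundary conditions. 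Alternatively — and this is probably the shortest write-up — I would just cite the classical identity (Schoenberg; see \cite{S73}, or the references \cite[Theorem 1.1]{WXX}, \cite[Corollary 2.7]{He} already named in the excerpt) and present the proof as a translation: show $B_i$ equals the $B$-spline normalization of the Eulerian generating polynomial by comparing the generating function $\sum_j B_i(j) z^j$ with the Worpitzky-type identity $\sum_{j} A(i-1,j) z^j = (1-z)^i \sum_{k \ge 0} (k+1)^{i-1} z^k$ divided through by $(i-1)!$ and a power of $(1-z)$.

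The main obstacle I anticipate is purely bookkeeping: matching the index shift in $A(i-1, j-1)$ versus $B_i(j)$ cleanly, and being careful that the recursion for $B$-splines at integer points is $\nabla$ of a \emph{polynomial} piece, so one must integrate $t^{i-2}$ against the previous spline correctly to land exactly on the coefficients $j$ and $i-1-j$ of the Eulerian recursion rather than off-by-one versions of them. There is no deep difficulty — the result is classical — so in the paper I would keep this short: state the base case, state that both sides satisfy the Eulerian recursion with the same boundary values, conclude by induction, and refer the reader to \cite{S73, WXX, He} for the detailed verification of the recursion for $B$-splines at integers.
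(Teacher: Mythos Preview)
The paper does not actually prove this lemma: it simply states the identity with attribution to Schoenberg and directs the reader to \cite[Theorem~1.1]{WXX} or \cite[Corollary~2.7]{He}. Your proposal is therefore consistent with the paper's treatment --- indeed, you explicitly note that citing \cite{S73, WXX, He} is the shortest route, and that is precisely what the paper does. Your inductive sketch (matching the Eulerian recursion $A(i-1,j-1) = j\,A(i-2,j-1) + (i-1-j)\,A(i-2,j-2)$ against the spline recursion $B_i' = \nabla B_{i-1}$, together with boundary values) is a correct outline of how one would actually prove the identity and goes well beyond what the paper supplies; nothing in it is wrong, though as you anticipate the only work is in the index bookkeeping.
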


Suitably normalized $B$-splines as well as their derivatives converge.  

\begin{theorem}
    \label{thm:derivatives of splines converge}
For any $k \in \N_0$, the sequence of $k$-the derivatives $(B_i^{(k)})_{i \in \N}$ converges to the $k$-th derivative of the Gaussian function and 
\[
{\textstyle (\frac{i}{12})}^{\frac{k+1}{2}} B_i^{(k)} \big (\sqrt{ {\textstyle \frac{i}{12}}} x + {\textstyle \frac{i}{2}} \big ) = {\textstyle \frac{1}{\sqrt{2 \pi}} } \cdot {\textstyle \frac{d^k}{d x^k}} \exp(- {\textstyle \frac{x^2}{2}} ) + O ({\textstyle \frac{1}{i} })
\]
if $i > k+2$. 
\end{theorem}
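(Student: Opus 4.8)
The plan is to prove Theorem~\ref{thm:derivatives of splines converge} by combining the classical local limit theorem for sums of i.i.d.\ uniform random variables with a uniform control on all derivatives coming from the Fourier-analytic representation of the $B$-spline. First I would recall that $B_i$ is, up to normalization, the probability density of the sum $X_1 + \cdots + X_i$ of $i$ independent random variables each uniform on $[0,1]$; each $X_k$ has mean $\frac12$ and variance $\frac1{12}$, so the sum has mean $\frac{i}{2}$ and variance $\frac{i}{12}$. The Fourier transform (characteristic function) of $B_i$ is simply $\widehat{B_i}(\xi) = \bigl(\frac{1-e^{-i\xi}}{i\xi}\bigr)^i$ — really $\bigl(\sin(\xi/2)/(\xi/2)\bigr)^i$ up to a phase — so one has the exact inversion formula $B_i^{(k)}(x) = \frac{1}{2\pi}\int_{\R} (i\xi)^k e^{i\xi x}\,\widehat{B_i}(\xi)\,d\xi$, which makes the derivatives accessible.

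Next I would perform the substitution $x \mapsto \sqrt{i/12}\,x + i/2$ and rescale the frequency variable by $\eta = \sqrt{i/12}\,\xi$. This turns the normalized quantity $(i/12)^{(k+1)/2}B_i^{(k)}(\sqrt{i/12}\,x + i/2)$ into $\frac{1}{2\pi}\int (i\eta)^k e^{i\eta x}\,\bigl[\widehat{B_1}(\sqrt{12/i}\,\eta)\bigr]^i e^{-i\eta\sqrt{3i}}\,d\eta$, where the phase factor accounts for the centering at the mean. The standard Taylor expansion $\log\bigl[\widehat{B_1}(\sqrt{12/i}\,\eta)\bigr]^i = -\frac{\eta^2}{2} + O(i^{-1})$ on any fixed compact set, together with the fact that $|\widehat{B_1}(\xi)| \le (1+\xi^2)^{-1/2}$ or a similar decay bound gives absolute integrability uniformly in $i$, so by dominated convergence the integral converges to $\frac{1}{2\pi}\int (i\eta)^k e^{i\eta x} e^{-\eta^2/2}\,d\eta$, which is exactly the $k$-th derivative of the standard Gaussian density $\frac{1}{\sqrt{2\pi}}e^{-x^2/2}$ by the same Fourier inversion. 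Quantifying the error in the Taylor expansion (an Edgeworth-type estimate, using that the third cumulant of $X_1$ vanishes by symmetry so the correction is genuinely $O(1/i)$ rather than $O(1/\sqrt i)$) and controlling the tail contribution $|\eta| > \varepsilon\sqrt i$ by the exponential decay of $|\widehat{B_1}|^i$ there, yields the stated $O(1/i)$ error term.

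The main obstacle I expect is making the error estimate genuinely uniform over all derivative orders $k$ simultaneously and over the relevant range of $x$, while honestly tracking that the phase cancellation from centering at the mean is exact. Two technical points require care: first, the polynomial factor $(i\eta)^k$ in the integrand grows with $k$, so to keep the bound clean one needs the Gaussian-type decay $|\widehat{B_1}(\sqrt{12/i}\,\eta)|^i \lesssim e^{-c\eta^2}$ to hold on a fixed neighborhood of the origin with $c$ independent of $i$ (this follows from $\log|\widehat{B_1}(u)| \le -u^2/2 + Cu^4$ for small $u$), plus a uniform tail bound; second, the vanishing of the skewness is what promotes the local CLT error from the generic $O(i^{-1/2})$ to $O(i^{-1})$, and one must verify this survives differentiation — it does, because differentiating in $x$ only inserts polynomial factors in $\eta$ and does not interfere with the cumulant expansion in the exponent. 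An alternative, perhaps cleaner, route is to cite a known local limit theorem with Edgeworth correction for densities (for instance the classical results of Petrov) applied to the density of the uniform sum and its derivatives, since $B_1$ is bounded with all moments finite and lattice-free; then Theorem~\ref{thm:derivatives of splines converge} becomes essentially a specialization. I would present the Fourier-inversion argument as the main line and remark that the $O(1/i)$ rate is sharp because the next Edgeworth term involves the fourth cumulant of $X_1$, which is nonzero.
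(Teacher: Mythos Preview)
The paper does not actually prove this theorem: it is stated without proof and used as input to the subsequent Proposition~\ref{prop:Euler number peak}. The result is quoted from the $B$-spline literature (see the references \cite{S46, S73, WXX, He, XW}; in particular \cite{XW} treats exactly such asymptotics), so there is no ``paper's own proof'' to compare against.

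Your approach is the standard and correct one for establishing this kind of statement from scratch. Identifying $B_i$ with the density of a sum of $i$ independent uniform $[0,1]$ variables, passing to the characteristic function $\widehat{B_1}(\xi)=e^{-i\xi/2}\,\mathrm{sinc}(\xi/2)$, and running a local limit theorem with Edgeworth correction is exactly how results of this type are proved in the probability and spline literature. Your observation that the third cumulant of the centered uniform vanishes (by symmetry about $0$), so that the first Edgeworth correction term drops out and the error is genuinely $O(1/i)$ rather than $O(1/\sqrt{i})$, is the key point for the stated rate; and your remark that differentiation in $x$ only inserts polynomial factors in the frequency variable, hence is harmless once one has Gaussian-type decay of $|\widehat{B_1}|^i$ near the origin and exponential decay on the tails, correctly handles the passage from $k=0$ to general $k$. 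The sketch would need the usual care in splitting the integral into $|\eta|\le\varepsilon\sqrt{i}$ and its complement and in making the constants uniform, but there is no missing idea. If you prefer to avoid redoing this analysis, citing Petrov's local limit theorems for densities (as you suggest) or directly invoking \cite{XW} would be entirely in the spirit of how the paper itself treats the result.
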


We use this to derive the promised estimate. 

\begin{proposition}
    \label{prop:Euler number peak} 
If $m \gg 0$, then 
\[
A (2m-1, m-1) - 2 A(2m-1, m-2) + A(2m-1, m-3) < 0. 
\]
\end{proposition}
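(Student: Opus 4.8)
The plan is to translate the Eulerian-number expression into a statement about the second derivative of a uniform $B$-spline at its center via Lemma~\ref{lem:euler vs spline}, and then to extract the sign from the asymptotic expansion in Theorem~\ref{thm:derivatives of splines converge}. First I would apply Lemma~\ref{lem:euler vs spline} with $i = 2m$: writing $A(2m-1, j-1) = (2m-1)!\,B_{2m}(j)$, the left-hand side becomes
\[
(2m-1)! \big[ B_{2m}(m) - 2 B_{2m}(m-1) + B_{2m}(m-2) \big].
\]
The bracketed quantity is exactly the symmetric second difference $\big(\Delta^2 B_{2m}\big)(m)$ of the function $B_{2m}$ evaluated at the midpoint $m = i/2$ of its support $[0, 2m]$, using step size $1$. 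Since $(2m-1)! > 0$, it suffices to prove that this second difference is negative for all large $m$.

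Next I would approximate the second difference by the second derivative. Because $B_{2m}$ is $C^{2m-2}$ (it is obtained by $2m-1$ convolutions of the indicator of $[0,1]$) and in particular $C^2$ for $m \ge 2$, Taylor's theorem gives
\[
B_{2m}(m+1) - 2 B_{2m}(m) + B_{2m}(m-1) = B_{2m}''(\xi_m)
\]
for some $\xi_m$ in $[m-1, m+1]$; and by the symmetry $B_{2m}(x) = B_{2m}(2m - x)$ about the center, $B_{2m}(m+1) = B_{2m}(m-1)$, so the second difference at $m$ equals $B_{2m}(m-1) - 2B_{2m}(m) + B_{2m}(m-2)$ only after re-indexing — I would be careful here and simply note that by symmetry the quantity I want, $B_{2m}(m) - 2B_{2m}(m-1) + B_{2m}(m-2)$, is the symmetric second difference centered at $m-1$, which again equals $B_{2m}''(\eta_m)$ for some $\eta_m$ near $m-1$, or equivalently I rewrite everything to be centered at $m$ using symmetry. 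In either formulation, the task reduces to showing $B_{2m}''$ is strictly negative in a fixed neighborhood of the center for $m \gg 0$. For this I invoke Theorem~\ref{thm:derivatives of splines converge} with $k = 2$ and $i = 2m$: setting $x = 0$ (or, more carefully, $x$ ranging over a shrinking interval of size $O(1/\sqrt{m})$ around $0$ that contains the relevant evaluation points after the rescaling $y = \sqrt{i/12}\,x + i/2$), we get
\[
{\textstyle (\tfrac{2m}{12})}^{3/2} B_{2m}''\big({\textstyle \sqrt{\tfrac{2m}{12}}\, x + m}\big) = {\textstyle \frac{1}{\sqrt{2\pi}}}\cdot {\textstyle \frac{d^2}{dx^2}} \exp\!\big(-{\textstyle \frac{x^2}{2}}\big) + O\big({\textstyle \frac{1}{m}}\big).
\]
The second derivative of the Gaussian at $0$ is $-1$, a strictly negative constant, so the right-hand side is bounded away from $0$ by a negative quantity for all large $m$; multiplying back by the positive factor $(2m/12)^{3/2}$ shows $B_{2m}''$ is negative near the center, hence the second difference and therefore the Eulerian-number combination is negative.

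The main obstacle is bookkeeping around the rescaling: the evaluation points $m, m-1, m-2$ correspond, after substituting $y = \sqrt{i/12}\,x + i/2$ with $i = 2m$, to values of $x$ of order $1/\sqrt{m}$, which do shrink to $0$, so I must ensure that the $O(1/i)$ error term in Theorem~\ref{thm:derivatives of splines converge} is uniform for $x$ in a neighborhood of $0$ (it is, since the stated expansion holds pointwise with the error controlled by the smoothness of $B_i$, and one checks the implied constant is locally uniform) and that the mean-value point $\xi_m$ from Taylor's theorem also lies in the range where the estimate applies. An alternative that sidesteps the mean-value step is to work directly with the second difference: since $\Delta^2$ with unit step approximates $B_{2m}''$ up to an error controlled by the fourth derivative $B_{2m}^{(4)}$, and Theorem~\ref{thm:derivatives of splines converge} with $k = 4$ shows $(2m/12)^{5/2} B_{2m}^{(4)}$ is bounded, the difference between the unit-step second difference of $B_{2m}$ and $B_{2m}''$ at the center is $O(m^{-5/2})$, which is negligible compared to the main term of size $B_{2m}''(\text{center}) \sim -\frac{1}{\sqrt{2\pi}}(12/(2m))^{3/2}$. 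Either way the conclusion follows once $m$ is large enough that the Gaussian main term dominates all error terms.
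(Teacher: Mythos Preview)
Your proposal is correct and follows essentially the same route as the paper: convert the Eulerian-number combination via Lemma~\ref{lem:euler vs spline} into the second difference $B_{2m}(m)-2B_{2m}(m-1)+B_{2m}(m-2)$, and then use Theorem~\ref{thm:derivatives of splines converge} with $k=2$ to see that $B_{2m}''$ is negative near the center. The paper's presentation is slightly cleaner in that it shows directly $B_{2m}''(x)<0$ for \emph{all} $x\in[m-2,m]$ (since on that interval $(x-m)^2\le 4$ and hence $-1+\tfrac{6}{m}(x-m)^2<0$ once $m>24$), which immediately gives negativity of the second difference without the mean-value/uniformity bookkeeping you flag; but this is a cosmetic difference, not a substantive one.
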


\begin{proof}
By Lemma~\ref{lem:euler vs spline}, the claimed inequality is equivalent to 
\begin{equation}
    \label{eq:spline estimate} 
B_{2 m} (m) - 2 B_{2m} (m-1) + B_{2m} (m-2) < 0. 
\end{equation}
Using that $\frac{d^2}{d x^2} \exp(- {\textstyle \frac{x^2}{2}} ) = (-1 + x^2) \exp(-\frac{x^2}{2})$, Theorem~\ref{thm:derivatives of splines converge} gives
\[
{\textstyle (\frac{m}{6})}^{\frac{3}{2}} B_{2m}^{(2)} (x) = {\textstyle \frac{1}{\sqrt{2 \pi}} } \cdot \big (-1 + {\textstyle \frac{6}{m} } (x - m )^2 \big )  \exp(- {\textstyle \frac{3}{m} {(x-m)^2}} ) + O ({\textstyle \frac{1}{m} }). 
\]
A straightforward computation shows that $-1 + {\textstyle \frac{6}{m} } (x - m )^2 < 0$ if $m > 24$. Hence, we obtain for $x \in [m-2, m]$ that $B_{2m}^{(2)} (x) < 0$  whenever $m \gg 0$. This in turn implies the desired Inequality~\ref{eq:spline estimate},  and thus completes the argument. 
\end{proof}

Combining the above results, we establish Conjecture~\ref{conj:uniform} asymptotically. 

\begin{theorem}
  \label{thm:asympt wlp failure} 
Let $n \gg 0$ be an integer and 
consider the ideal  $I = (x_0^{d},\cdots,x_{n}^d,L^{d})$ of the polynomial ring   $R = K[x_0,\ldots,x_n]$, where 
$L \in R$ is a general linear form. Then the ring $R/I$ fails the WLP for every  $d \gg 0$. 
\end{theorem}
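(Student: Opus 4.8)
The plan is to reduce Theorem~\ref{thm:asympt wlp failure} to the case where $n$ is even, which is precisely Conjecture~\ref{conj:uniform}, and then to invoke the asymptotic results already assembled. If $n$ is odd, then by Remark~\ref{rem:complete pic} the algebra $R/I$ fails the WLP whenever $d \ge 2$, so there is nothing to prove once $n \ge 5$. Hence from now on assume $n = 2m$ is even with $m \gg 0$. The goal is then to show that $R/I$ fails the WLP for every $d \gg 0$, and for this I would combine Proposition~\ref{prop:num conditions}(e) with Proposition~\ref{prop:Euler number peak}.

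First I would record that Proposition~\ref{prop:Euler number peak} supplies exactly the hypothesis needed in Proposition~\ref{prop:num conditions}(e): for $m \gg 0$ one has
\[
A(2m-1, m-1) - 2 A(2m-1, m-2) + A(2m-1, m-3) < 0.
\]
Therefore Proposition~\ref{prop:num conditions}(e) applies and yields that Conjecture~\ref{conj:uniform} holds for $n = 2m$ and every $d \gg 0$; that is, $R/(x_0^d,\ldots,x_n^d,L^d)$ fails the WLP for all sufficiently large $d$. Note that the "$d \gg 0$" in the conclusion is uniform in $m$ only after one checks that the threshold coming from Proposition~\ref{prop:num conditions}(e) (which in turn traces back to the $O(1/m)$ error term in Theorem~\ref{thm:derivatives of splines converge} and the requirement $m > 24$) depends on $m$ in a controlled way; strictly speaking the statement is "for each $n \gg 0$ and each $d \gg 0$ (depending on $n$)", which is exactly what is claimed.

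The one subtlety worth spelling out is why $m \gg 0$ in Proposition~\ref{prop:num conditions}(e) translates to $n \gg 0$ in the theorem, together with the requirement $n \ge 8$ even appearing in Conjecture~\ref{conj:uniform}: since $n = 2m$ and $m \ge 5$ already forces $n \ge 10 \ge 8$, the parity and size constraints are automatically satisfied once $m$ is large, so no separate small-case analysis is needed here. I would also remark, for completeness, that the odd case is not merely "WLP fails for $d \ge 2$" vacuously but is genuinely covered by \cite{MMN2} as recorded in Remark~\ref{rem:complete pic}, so the combined statement is: for $n \gg 0$ of either parity and $d \gg 0$, the ring $R/I$ fails the WLP.

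The main obstacle in this argument is not in the final assembly — which is a short citation chain — but is entirely concentrated upstream, in Proposition~\ref{prop:Euler number peak}, i.e. in showing that the second difference of Eulerian numbers near the central value becomes negative. That is where the real work lies: translating the Eulerian-number inequality into a statement about the uniform $B$-spline $B_{2m}$ via Lemma~\ref{lem:euler vs spline}, and then controlling $B_{2m}^{(2)}$ on the interval $[m-2,m]$ using the Gaussian-limit asymptotics of Theorem~\ref{thm:derivatives of splines converge}. Once those analytic estimates are in hand, the deduction of Theorem~\ref{thm:asympt wlp failure} is essentially bookkeeping: check parity, check that $m \gg 0$ gives $n \ge 8$, invoke Proposition~\ref{prop:num conditions}(e) for the even case and Remark~\ref{rem:complete pic} for the odd case.
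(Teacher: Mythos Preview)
Your proposal is correct and follows essentially the same approach as the paper: handle odd $n$ by citing \cite{MMN2} (the paper cites \cite[Theorem 6.1]{MMN2} directly rather than going through Remark~\ref{rem:complete pic}, but this is the same content), and for even $n = 2m$ combine Proposition~\ref{prop:num conditions}(e) with Proposition~\ref{prop:Euler number peak}. The paper's proof is just two sentences long and contains exactly this citation chain; your additional commentary on thresholds and the location of the real work is accurate but not needed for the formal argument.
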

    
\begin{proof}
If $n$ is odd, then this follows from the more precise \cite[Theorem 6.1]{MMN2}. If $n$ is  even, we use Propositions~\ref{prop:num conditions}(e) and \ref{prop:Euler number peak}. 
\end{proof}


\section{Final Comments and Open Problems} 
\label{sec:final comm} 

We conclude by discussing some problems motivated by this work. We begin with a combinatorial 
problem concerning Eulerian numbers. 

It is well-known that the Eulerian numbers satisfy 
\[
A(n, k) = A (n, n - k - 1). 
\]
Moreover, fixing $n$ the sequence $(A (n, k))_{k \in \N_0}$ is unimodal. More precisely, 
if $n = 2m -1$ is odd, then $(A (n, k))_{k \in \N_0}$ is strictly increasing on the interval $[0, m-1]$ 
and strictly decreasing on $[m-1, 2m-1]$. Thus, the maximum value of the sequence is attained 
precisely if $k = m-1$. If $n = 2m$ is even, then the sequence is strictly increasing on the interval 
$[0, m-1]$, strictly decreasing on $[m, 2m]$, and has exactly two peaks, namely at $k = m-1$ and at  
$k = m$ as $A(2m, m-1) = A (2m, m)$. 

It is natural to consider the behavior of the  sequence of differences
\[
D(n, k) = A(n, k) - A(n, k-1). 
\]
Then the above results say that $D(n, k) > 0$ if and only if $k \in [0, \lfloor \frac{n-1}{2} \rfloor ]$. Based on computations of $D(n, k)$ for many integers $n, k$, we propose the following conjecture. 

\begin{conjecture} 
   \label{conj:Euler numbers} 
Fixing $n \ge 3$, the sequence $(D (n, k))_{k \in \N_0}$ is strictly increasing on the interval $[0, \lfloor \frac{n-1}{2} \rfloor ]$ if $n$ is even or if $n \in \{3, 5, 7\}$. 

If $n = 2m -1 \ge 9$ is odd, then the sequence is strictly increasing on $[1, m-2]$, but one has $D(2m-1, m-2) > D(2m-1, m-1) > 0$. 
\end{conjecture}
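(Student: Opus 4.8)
\begin{remark}
The plan is to route everything through the uniform $B$-spline $B_{n+1}$, exactly as in the proof of Proposition~\ref{prop:Euler number peak}. By Lemma~\ref{lem:euler vs spline} one has $A(n,k) = n!\,B_{n+1}(k+1)$, so
\[
D(n,k+1) - D(n,k) = n!\bigl[B_{n+1}(k+2) - 2 B_{n+1}(k+1) + B_{n+1}(k)\bigr],
\]
and every assertion of the conjecture turns into a statement about the signs of the centered second differences of $B_{n+1}$ at the integers near $(n+1)/2$. Using the identity $g(x+1) - 2g(x) + g(x-1) = \int_{-1}^{1}(1-|s|)\,g''(x+s)\,ds$, each such sign is governed by the behavior of $B_{n+1}''$ on a window of width $2$ centered at an integer.

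First I would pin down the sign pattern of $B_{n+1}''$. It is classical that $B_i''$ has exactly two sign changes in the open interval $(0,i)$; by the symmetry of $B_i$ about $i/2$ they occur at some $\xi_i$ and $i-\xi_i$ with $0 < \xi_i < i/2$, and $B_i'' > 0$ on $(0,\xi_i)\cup(i-\xi_i,i)$ while $B_i'' < 0$ on $(\xi_i, i-\xi_i)$. Feeding this into the integral formula, the conjecture becomes a finite list of inequalities comparing $\xi_{n+1}$ with the integers just below $(n+1)/2$: strict increase of $D(n,\cdot)$ on $[0,\lfloor\frac{n-1}{2}\rfloor]$ corresponds to $\xi_{n+1}$ sitting (essentially) above $\lfloor\frac{n-1}{2}\rfloor$, whereas the odd-case dip $D(2m-1,m-2) > D(2m-1,m-1)$ together with strict increase on $[1,m-2]$ corresponds to $\xi_{2m}$ landing between $m-2$ and $m-1$.

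The decisive step is thus to locate $\xi_{n+1}$. Asymptotically, Theorem~\ref{thm:derivatives of splines converge} gives $\xi_i = \tfrac{i}{2} - \sqrt{i/12} + O(i^{-1/2})$; upgrading this to an estimate with an explicit error term uniform in $i$ — an Edgeworth-type sharpening of Theorem~\ref{thm:derivatives of splines converge} ought to suffice — would settle all but finitely many $n$, and the remaining small values of $n$ can be checked by computing the relevant Eulerian rows directly. I expect this effective localization of $\xi_{n+1}$ to be the main obstacle: one must decide, for every $n$, on which side of each pertinent integer $\xi_{n+1}$ falls, which requires control well beyond the qualitative limit in Theorem~\ref{thm:derivatives of splines converge}. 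Since the drift moves $\xi_{n+1}$ below the center of $B_{n+1}$ by an amount of order $\sqrt{(n+1)/12}$, the same analysis should simultaneously determine the exact range of $n$ for which the conjectured intervals are correct — in particular whether, for large even $n=2m$, the behavior instead matches the one described for odd $n\ge 9$, namely strict increase on $[1,m-2]$ followed by $D(2m,m-2)>D(2m,m-1)>0$.
\end{remark}
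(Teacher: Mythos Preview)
The statement you are addressing is a \emph{conjecture} in the paper, not a theorem; the paper offers no proof. The authors explicitly write that Proposition~\ref{prop:Euler number peak} ``provides evidence'' for one clause, that ``adapting the method of its proof, one obtains further asymptotic results confirming parts'' of the conjecture, but that ``new arguments are needed to establish the conjecture entirely.'' So there is no paper proof to compare against.

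Your proposal is, appropriately, not a proof either: it is a strategy sketch, and you flag the main obstacle yourself (an effective, non-asymptotic localization of the inflection point $\xi_{n+1}$ of $B_{n+1}$). The route you outline --- rewriting the second differences of $D(n,\cdot)$ via Lemma~\ref{lem:euler vs spline} as weighted averages of $B_{n+1}''$ and then controlling the sign through the location of the zeros of $B_{n+1}''$ --- is precisely the method the paper uses for Proposition~\ref{prop:Euler number peak} and the one it suggests adapting. In that sense your plan is fully aligned with the paper's own partial progress; what is missing is exactly what the paper says is missing.

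Your closing observation deserves emphasis. The same asymptotic that yields $D(2m-1,m-2) > D(2m-1,m-1)$ for large odd $n$ (Proposition~\ref{prop:Euler number peak}) applies verbatim to the centered second difference $B_{2m+1}(m) - 2B_{2m+1}(m-1) + B_{2m+1}(m-2)$ governing $D(2m,m-1) - D(2m,m-2)$: the window $[m-2,m]$ again lies within $O(1)$ of the center of $B_{2m+1}$, hence well inside the region where $B_{2m+1}'' < 0$ once $\sqrt{(2m+1)/12}$ is large. This predicts $D(2m,m-2) > D(2m,m-1)$ for all sufficiently large even $n$, in tension with the conjecture's first clause. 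So your plan, if carried out, would not merely attack the conjecture but likely correct its even-$n$ formulation; that is a genuine contribution beyond what the paper records.
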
 

Notice that 
\[
D(2m-1, m-1) - D(2m-1, m-2) = A (2m-1, m-1) - 2 A(2m-1, m-2) + A(2m-1, m-3). 
\]
Hence,  Proposition~\ref{prop:Euler number peak} provides evidence for the above conjecture as it shows $D(2m-1, m-2) > D(2m-1, m-1)$ if $m \gg 0$. Adapting the method of its proof, one obtains further asymptotic results confirming parts of Conjecture~\ref{conj:Euler numbers}. However, new arguments are needed to establish the conjecture entirely. 
We feel that it would be preferable to have combinatorial arguments. In particular, we would like to see a combinatorial proof of Proposition~\ref{prop:Euler number peak}. \smallskip 

As mentioned above, we expect that Theorem~\ref{thm:n+3 points} has an unnecessary  assumption. 

\begin{conjecture}
    \label{conj:unnecessary assump}
Let $Z \subset \PP^n$ be a subset of $n+3$ in linearly general position, where $n$ is odd.  Then one has 
\[
\alpha (I_Z^{(k)}) = 
\left \lceil \frac{(n+1) (n+3) k}{n^2 + 2 n -1} \right \rceil. 
\]
\end{conjecture}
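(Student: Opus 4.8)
The plan is to prove the equivalent statement about regularity and then translate back via duality. By Theorem~\ref{thm:duality}, applied exactly as in the proof of Corollary~\ref{cor:initial deg estimate n+3}, the asserted equality $\alpha(I_Z^{(k)}) = \lceil \frac{(n+1)(n+3)k}{n^2+2n-1}\rceil$ for \emph{all} $k>0$ is equivalent to
\[
\reg R/(\ell_1^d,\dots,\ell_{n+3}^d) = \left\lfloor \frac{(n+1)(n+3)(d-1)}{2(n+2)} \right\rfloor =: r
\]
for all $d>0$, where $\ell_1,\dots,\ell_{n+3}$ are dual to the points of $Z$. Theorem~\ref{thm:n+3 points} already supplies the upper bound $\reg \le r$ for every $d$, together with the matching lower bound when $(n+2)\mid(d-1)$ or $d\ge n^2+n+2$, and it supplies the lower bound $\alpha(I_Z^{(k)})\ge\lceil\frac{(n+1)(n+3)k}{n^2+2n-1}\rceil$ for every $k$. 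Hence the entire remaining content is the single nonvanishing statement $\dim_K[R/(\ell_1^d,\dots,\ell_{n+3}^d)]_r \ge 1$ for the still uncovered values of $d$, equivalently (again by Theorem~\ref{thm:duality}) the nonvanishing of the fat point system $\cL_n(r;(r+1-d)^{n+3})$.

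First I would revisit the reduction in the proof of Proposition~\ref{prop:reg n+3 gen}. Writing $n=2m-1$ and $d-1=t(2m+1)+q$ with $0\le q\le 2m$, that argument applies Lemma~\ref{lem:Cremona} $m-1$ times and then Lemma~\ref{lem:Bezout} repeatedly, and it goes through precisely when the integer $b=t-(m-1)q+(2m-1)\lfloor\frac{mq}{2m+1}\rfloor$ is positive; one checks $b>0$ whenever $q=0$ or $t$ is large, which is exactly the range Theorem~\ref{thm:n+3 points} covers. The task is therefore to treat the finitely many residues $q$ (for each $m$) together with the small values of $t$ for which $b\le 0$. In that regime the Bézout step is unavailable, and after the $m-1$ Cremona transformations one is left with a linear system $\cL_{2m-1}(c+d-1;[c-t]^{2m-2},c^4)$ that need not reduce to a complete intersection. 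I would try to push the bookkeeping further by (i) interleaving Lemma~\ref{lem:Cremona} and Lemma~\ref{lem:Bezout} in a different order, or (ii) stopping the reduction one step earlier and bounding the dimension of the resulting fat point scheme directly from below --- for instance by producing an explicit product of hyperplanes, each spanned by $n$ of the $n+3$ points, of the prescribed degree and multiplicities. This last construction is the dual incarnation of the Cremona moves and turns the problem into a concrete combinatorial optimization over the $\binom{n+3}{3}$ hyperplane classes.

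An alternative plan is a semicontinuity/specialization argument. Since the Hilbert function of the quotient $R/(\ell_1^d,\dots,\ell_{n+3}^d)$ is upper semicontinuous in $(\ell_1,\dots,\ell_{n+3})$, the number $\dim_K[R/(\ell_1^d,\dots,\ell_{n+3}^d)]_r$ is \emph{smallest} for the generic configuration, so one cannot conclude nonvanishing by checking a single special configuration. Instead I would look for a flat family of configurations in linearly general position along which this dimension is locally constant, with one fibre a configuration that can be analyzed by hand --- say a highly symmetric or monomial-type model, or one degenerating to the $(n+2)$-point or $(n+1)$-point situations governed by Theorem~\ref{thm:n+2 points} and Remark~\ref{rem:n+1 points} --- and then transport the nonvanishing to the generic fibre. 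In characteristic zero one could also try to exploit the strong Lefschetz property of a monomial complete intersection, as in the remark following Proposition~\ref{prop:reg n+2 gen}, by realizing the relevant quotient by a general linear form appropriately; in positive characteristic one would argue the purely numerical conclusion by reduction to characteristic zero or find a characteristic-free substitute.

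The hard part, I expect, is precisely the family of exceptional triples $(m,q,t)$ with $b\le 0$: for each $m$ these are finite in number, but that number grows with $m$, so no bounded computation disposes of them, and the fat point systems that arise in this range are ``almost special,'' which is exactly where interpolation problems of this kind become genuinely subtle. What seems to be missing is a uniform lower bound on the dimensions of these systems --- or a clean structural reason, perhaps an $\mathrm{SL}_2$-type or representation-theoretic interpretation extending the Verlinde formula of Remark~\ref{rem:Verlinde} beyond its single known degree --- that forces $[R/(\ell_1^d,\dots,\ell_{n+3}^d)]_r\neq 0$ for all $d$. This is presumably why the statement appears here only as a conjecture.
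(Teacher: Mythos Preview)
The statement you are addressing is Conjecture~\ref{conj:unnecessary assump}; the paper does \emph{not} prove it. Your proposal is therefore not being compared against an existing proof but against the paper's own discussion of the difficulty, and on that score your analysis is accurate and honest: you correctly reduce everything to the nonvanishing $[R/(\ell_1^d,\dots,\ell_{n+3}^d)]_r\neq 0$ for the outstanding values of $d$, you correctly recompute the parameter $b=t-(m-1)q+(2m-1)\lfloor mq/(2m+1)\rfloor$ governing whether the Bezout step in Proposition~\ref{prop:reg n+3 gen} applies, and you correctly isolate the regime $b\le 0$ as the obstacle. This matches the paper's own commentary after the conjecture, which says that the Cremona/Bezout machinery ``seems to work for every choice of $(n,k)$'' but ``becomes less uniform and seems to require the consideration of many cases.''

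That said, what you have written is a research plan, not a proof. None of the three strategies you sketch --- reordering the Cremona and Bezout moves, exhibiting an explicit product of hyperplanes, or a semicontinuity/specialization argument --- is carried to a conclusion, and you yourself flag the essential gap: the exceptional triples $(m,q,t)$ grow in number with $m$, so a uniform argument is needed and none is supplied. Your semicontinuity remark is also pointed in the wrong direction for an easy fix (as you note, upper semicontinuity of the quotient dimension means a special configuration can only witness vanishing, not nonvanishing). In short, your write-up is a fair diagnosis of why the statement remains a conjecture, but it does not close the gap, and the paper does not either.
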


Recall that in Theorem~\ref{thm:n+3 points} we showed that this is true if $\frac{1}{2} (n^2 + 2n -1)$ divides $k$ or $k \ge \frac{(n^2 + n + 1) (n^2 + 2n -1)}{2 (n+2)}$. In fact the method of proof seems to work for every choice of $(n, k)$. However, if the mentioned numerical assumptions are not satisfied the use of Lemmas~\ref{lem:Cremona} and \ref{lem:Bezout} becomes less uniform and seems to require the consideration of many cases. 

Note that establishing Conjecture~\ref{conj:unnecessary assump} removes one of the obstacles for improving Theorem~\ref{thm:asympt wlp failure} and fully showing Conjecture~\ref{conj:uniform} completely. In fact, combined with  Proposition~\ref{prop:num conditions}(a)  the latter conjecure would follow by also proving that the polynomial in $t$ 
\[
P_{m, q} (t)= \sum_{k=0}^m (-1)^k \binom{2m+2}{k} \binom{m-1 + \lfloor \frac{mq}{2m+1} \rfloor + (q+1) (m-k) + t [ 2m (m+1) - k (2m+1)]}{2m-1}  
\]
is not identical to a positive constant whenever $m \ge2$ and $0 \le q \le 2m$.



\end{document}